\newtheorem{theorem}{Theorem}[section]
\newtheorem{example}{Example}[section]
\newtheorem{lemma}[theorem]{Lemma}
\theoremstyle{definition}
\newcommand{\rr}{\mathbf{R}}
\newcommand{\ttt}{\mathbf{T}}
\newcommand{\eff}{{\mathrm{eff}}}
\newcommand{\ve}{\varepsilon}
\newcommand{\ii}[1]{\int_{#1}}
\newcommand{\Oe}{\Omega_\varepsilon}
\newcommand{\Se}{\Sigma_\varepsilon}
\newcommand{\Ge}{\Gamma_\varepsilon^\pm}
\newcommand{\mop}[1]{{\mathop{\mathrm{#1}}}}
\title[Localization of eigenfunctions]{Localization of eigenfunctions in a thin domain with locally periodic oscillating boundary}
\author{K. Pettersson}
\address{Chalmers University of Technology, Sweden}
\email{klaspe@chalmers.se}
\date{\today}
\subjclass{Primary: 35B27; Secondary: 35B25;}
\keywords{Localization of eigenfunctions, locally periodic structure, dimension reduction, two-scale convergence, singular measures, homogenization, thin domains.}
\thanks{}
\g@addto@macro{\endabstract}{\@setabstract}
\newcommand{\authorfootnotes}{\renewcommand\thefootnote{\@fnsymbol\c@footnote}}%
\begin{document}
\maketitle

%
%
%

\begin{abstract}
We study a Dirichlet spectral problem for a second-order elliptic operator with locally periodic coefficients in a thin domain. The boundary of the domain is assumed to be locally periodic.
When the thickness of the domain $\ve$ tends to zero, the eigenvalues are of order $\ve^{-2}$ and described in terms of the first eigenvalue $\mu(x_1)$ of an auxiliary spectral cell problem parametrized by $x_1$, 
while the eigenfunctions localize with rate $\sqrt\ve$.
\end{abstract}


\section{Introduction}\label{sec:introduction}


This paper deals with the leading terms in the asymptotics
of  the eigenvalues and the eigenfunctions to the
following Dirichlet spectral problem in a thin domain with a periodically oscillating boundary:
\begin{align}\label{eq:intro}
-\mop{div}\big( A\big(x_1, \frac{x}{\ve}\big) \nabla u^\ve \big)  & = \lambda^\ve u^\ve \quad \text{ in } \Omega_\ve, 
\end{align}
with the Dirichlet condition on the boundary $\partial \Omega^\ve$.
The asymptotics is established in the case of locally periodically oscillating coefficients, under structural assumptions given in terms of an eigenvalue problem on the periodicity cell.

In~\cite{friedlander2009spectrum}, Friedlander and Solomyak studied the 
spectrum of the Dirichlet Laplacian in a narrow strip
\begin{align*}
\Omega^\ve & = \{ (x,y) \in \mathbf{R}^2 : -a < x < a, \, 0 < y < \ve h(x) \},
\end{align*}
where $\ve$ is a small positive parameter.
The following structural assumptions were made:
$x = 0$ is the unique global maximum of positive function $h(x)$,
such that
\begin{align*}
h(x) & > 0 \text{ continuous on } I = [-a,a],\, a > 0 \\
h(x) & \text{ is } C^1 \text{ on } I \setminus \{ 0 \}\text{, and }\\
h(x) & = \begin{cases}
M - c_+ x^m + O(x^{m+1}), & x > 0, \\
M - c_- |x|^m + O(x^{m+1}), & x < 0,
\end{cases}\\
&  M, c_-, c_+ > 0, \quad m \ge 1.
\end{align*}
In particular, the authors have proved the following asymptotics result.

\begin{theorem}[Friedlander, Solomyak, 2009]\label{tm:salmiak}
Let $\lambda^\ve_j$ be the eigenvalues of $-\Delta$ on $\Omega^\ve$
with Dirichlet condition on $\Omega^\ve$.
Then 
\begin{align*}
\mu_j & = \lim_{\ve \to 0} \ve^{4(m+2)^{-1}} \Big( \lambda_j^\ve - \frac{\pi^2}{M^2 \ve^2} \Big),
\end{align*}
where $\mu_j$ are the eigenvalues of the operator on $L^2(\mathbf{R})$ given by \begin{align*}
-\frac{d^2}{dx^2} + q(x), \quad 
q(x) = 
\begin{cases}
2\pi M^{-3}c_+ x^m, & x > 0, \\
2\pi M^{-3}c_- x^m, & x < 0.
\end{cases}
\end{align*}
\end{theorem}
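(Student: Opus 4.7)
\emph{Plan of proof.}  The strategy combines dimension reduction (separation of transverse and longitudinal variables) with the min--max characterisation of eigenvalues and a suitable rescaling near the maximum of $h$.  First, I would straighten the domain via the change of variables $(x,y)\mapsto(x,t)$ with $t=y/(\ve h(x))$, mapping $\Omega^\ve$ onto the rectangle $R=(-a,a)\times(0,1)$.  The Laplacian acquires $h$-dependent coefficients, while the Dirichlet condition on $\{t=0\}\cup\{t=1\}$ becomes $x$-independent.  Expanding any $u\in H^1_0(\Omega^\ve)$ in the transverse basis $\psi_n(t)=\sqrt{2}\sin(\pi n t)$ as $u=\sum_n v_n(x)\psi_n(t)$ reduces the problem to an operator pencil on $L^2(-a,a)$ in which only the $n=1$ mode contributes at the relevant energy scale.

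Next, I would identify the effective one-dimensional operator.  Projecting the transformed quadratic form onto the first transverse mode yields, up to lower-order terms,
\begin{equation*}
a^\ve_{\eff}(v)=\int_{-a}^{a}\Big(|v'(x)|^2+\frac{\pi^2}{\ve^2 h(x)^2}|v(x)|^2\Big)\,dx.
\end{equation*}
The local expansion $h(x)^{-2}=M^{-2}+2M^{-3}c_\pm|x|^m+O(|x|^{m+1})$ motivates the rescaling $x=\ve^{2/(m+2)}\xi$, which balances $-\partial_x^2$ against the excess potential $\pi^2\bigl(h(x)^{-2}-M^{-2}\bigr)/\ve^2$ at the common scale $\ve^{-4/(m+2)}$.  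Multiplying through by $\ve^{4/(m+2)}$ and passing to the limit isolates the operator $-d^2/d\xi^2+q(\xi)$ on $L^2(\mathbf{R})$, whose eigenvalues are the claimed $\mu_j$.

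I would then establish matching upper and lower bounds via min--max.  For the upper bound, I insert trial functions of the form $\phi_j\bigl(\ve^{-2/(m+2)}x\bigr)\psi_1(t)$, where $\phi_j$ is the $j$-th eigenfunction of the limit operator; since its potential grows at infinity, $\phi_j$ decays rapidly, so truncating to $[-a,a]$ introduces exponentially small errors.  For the lower bound, the decomposition $u=\sum_n v_n\psi_n$ splits the energy: modes with $n\ge 2$ carry excess potential of order at least $3\pi^2/(M\ve)^2$, which exceeds any admissible $\lambda^\ve_j-\pi^2/(M\ve)^2$ and hence forces the mass onto the $n=1$ subspace, while the $n=1$ contribution is bounded below by $a^\ve_{\eff}(v_1)$ after controlling the cross-terms.

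The principal obstacle is to control these cross-terms introduced by the straightening---specifically the first-order terms involving $h'(x)/h(x)$ that couple $\partial_x v_n$ to $v_n$, and the inter-mode coupling generated by the $x$-dependent basis $\psi_n(y/(\ve h(x)))$.  Showing that they are of lower order than $\ve^{-4/(m+2)}$ uniformly on the relevant energy window is where the regularity of $h$ on $I\setminus\{0\}$ and its precise power-law behaviour at $x=0$ enter: away from the origin the effective potential exceeds $\pi^2/(M\ve)^2$ by an amount of order $\ve^{-2}$, which localises the low-energy eigenfunctions in the rescaled window $|x|\lesssim \ve^{2/(m+2)}$ and dominates all error terms arising in the integration by parts.
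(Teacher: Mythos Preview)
This theorem is not proved in the present paper: it is quoted from Friedlander and Solomyak's 2009 article as background and motivation, and no argument for it appears here. Consequently there is no ``paper's own proof'' to compare your proposal against.

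That said, your sketch is a faithful outline of the method Friedlander and Solomyak actually use in \cite{friedlander2009spectrum}: straighten the strip, expand in the transverse Dirichlet sine basis, isolate the $n=1$ mode by the spectral gap, and rescale longitudinally by $\ve^{2/(m+2)}$ to obtain the harmonic-oscillator-type limit. The scaling computation is correct, and the identification of the cross-terms from $h'/h$ as the main technical obstacle is accurate. One point to tighten: the lower bound requires more than showing the $n\ge 2$ modes carry small \emph{mass}; you must also control their contribution to the \emph{gradient} energy, since the straightening couples $\partial_x v_n$ across modes. Friedlander and Solomyak handle this via an operator-theoretic comparison rather than a purely variational one, and the $C^1$ regularity of $h$ away from $0$ together with the explicit power behaviour at $0$ is used precisely here.

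It may be worth noting, for context, that the present paper's own method for its main result (Theorem~\ref{tm:one}) is different in spirit: instead of straightening and separating variables, it factors out the first cell eigenfunction $\psi_1(x_1,y)$, obtaining a problem with degenerate weight $\psi_1^2$, and then passes to the limit via two-scale convergence with respect to singular measures. That approach is tailored to the locally periodic setting with oscillating coefficients, where no explicit transverse basis is available; your approach is simpler and adequate for the constant-coefficient strip of Theorem~\ref{tm:salmiak}, but would not extend directly to the paper's setting.
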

One notes that Theorem~\ref{tm:salmiak} tells that the decay of the profile (or diameter) $h(x_1)$ in the vicinity of its maximum dictates the growth of the eigenvalues $\lambda^\ve_j$,
and the rate of localization of the eigenfunctions,
as $\ve$ tends to zero.

The present paper aims at understanding the
effect of oscillating
coefficients and oscillating boundary on the spectral asymptotics.
In Theorem~\ref{tm:salmiak}, the authors imposed structural assumptions on the strip profile
$h(x_1)$, requiring for $h(x_1)$ a unique maximum point. In singularly perturbed problems, when the leading term of the asymptotics contains an oscillating function, being the first eigenfunction of an auxiliary spectral cell problem, a standard assumption is on the corresponding first eigenvalue (see for example \cite{AlPi-2002}). In our case, we use also the factorization technique, impose an assumption on the first 
eigenvalue $\mu_1(x_1)$ in the periodicity cell eigenvalue problem (see (\ref{eq:H})), and require that it has a unique minimum point. Even if such assumptions are standard,
it is sometimes not apparent how they are related to the geometry of the domain or the coefficients of the equation.
In the present paper we show that the assumption in Theorem~\ref{tm:salmiak} is a specific case of (\ref{eq:H}) (see Example~\ref{ex:hcase}).
We show that the global minimizer of $\mu_1$ determines the point where the eigenfunctions localize. In addition, the growth
of $\mu$ in the vicinity of its minimum
dictates the rate of localization as well as the behavior of
the eigenvalues.

The method we use in order to obtain the leading terms of 
the asymptotics of the eigenvalue and the eigenfunctions
to~\eqref{eq:intro} is homogenization via two-scale convergence.
In particular, we use the singular measure approach,
and the two-scale compactness theorem proved in~\cite{zhikov2000extension,Zh-2004} (see also \cite{BouFra-01}),
which is expressed in the current setting in~\cite{pettersson2017two}.

The fundamentals of spectral asymptotics for elliptic operators
has been considered in~\cite{vishik1957regular}, which has been
successfully applied in homogenization problems (see for example~\cite{oleinik2009mathematical}).
There are many works in which localization of eigenfunctions
have been identified in this context or due to oscillating coefficients
and thin domains.
We believe the works most closely related to our problem are the following.
In \cite{piatnitski1998asymptotic,allaire2002uniform}, spectral asymptotics and localization of eigenvalues in bulk domains with large potentials were considered. 
In \cite{pankratova2015spectral}, spectral asymptotics and localization of eigenvalues in thin domains with large potentials were considered.
Homogenization in domains with low amplitude oscillating boundaries
were considered in \cite{MePo-2010, ArPe-2011, pettersson2017two}. Spectral asymptotics of the Laplace operator in thin domains with slowly varying thickness were considered in \cite{FrSo-2009}, \cite{borisov2010asymptotics}, \cite{NaPeTa-2016}, where under the Dirichlet boundary conditions the localization of eigenfunctions occur.

In order to capture the oscillations of the eigenfunctions, in \cite{piatnitski1998asymptotic,allaire2002uniform} the factorization by an eigenfunction of an auxiliary spectral cell problem was used. In the present paper, we also use the factorization technique. However, due to the homogeneous Dirichlet boundary conditions, the first eigenfunction $\psi_1(x_1, y)$ of the auxiliary spectral problem (\ref{eq:auxproblem}) vanishes on the boundary of the cell, and the new unknown function satisfies a problem with degenerate coefficients
posed in a weighted Sobolev space.

The rest of this paper is organized as follows.
In Section~\ref{sec:problem}, we describe the domain assumptions and
state the problem and the hypotheses.
In Section~\ref{sec:asymptotics}, we establish a priori estimates 
for the eigenvalues and the eigenfunctions,
and prove the spectral asymptotics result of this paper.
In Section~\ref{sec:computing}, we describe a scheme to compute the 
 the leading terms in the expansions of the eigenvalue and the eigenfunctions
 to~\eqref{eq:intro}.

\section{Problem statement}\label{sec:problem}

The problem we consider is to describe the leading terms 
in the asymptotics of the first eigenvalue $\lambda_1^\ve$ and eigenfunction $u_1^\ve$ to the problem~\eqref{eq:originalproblem}, as $\ve$ tends to zero.
The domain under consideration is a thin cylinder with a periodically 
oscillating boundary, and the coefficients are assumed to be smooth, periodically oscillating,
and to satisfy the strong ellipticity condition.
The Dirichlet condition will be emposed on the boundary.
Here we specify the assumptions on the domain, and the coefficients and boundary conditions separately.

\subsection{A thin cylinder with a periodically oscillating boundary}\label{subsec:domain}
Let $\varepsilon>0$ be a small parameter; the points in $\rr^d$ are denoted by $x=(x_1, x')$, and $I = (-\frac{1}{2},\frac{1}{2})$.
We are going to work in a thin cylinder 
\begin{align*}
\Oe & = \Big\{ x=(x_1,x') : x_1 \in I, \, x' \in \ve Q(x_1,\frac{x_1}{\ve}) \Big\}.
\end{align*}
Here $Q(x_1,x_1/\ve)$ describes the locally periodic varying cross section of the cylinder  introduced in the following way.

We denote
\begin{align*}
Q(x_1, y_1) = \{ y' \in \rr^{d-1} : F(x_1,y_1,y')>0 \},
\end{align*}
where $F(x_1,y_1,y')$ is such that
\begin{itemize}
\item[(H1)]
$F(x_1, y_1, y') \in C^{2}(\overline I \times \mathbf T^1 \times \rr^{d-1})$, where $\mathbf T^1$ is the one-dimensional torus.
\item[(H2)]
$Q(x_1, y_1)$ is non-empty, bounded, and simply connected.
\end{itemize}

To ensure that the conditions (H1), (H2) are fulfilled, we can take, for example, $F$ satisfying the assumptions 
\begin{itemize}
\item[(F1)]
For each $x_1$ and $y_1$, $F(x_1, y_1, 0)>0$ and $F(x_1, y_1, y') <0$ for $|y'| \ge R$, for some $R>0$.
This guarantees that $Q(x_1, y_1)$ is not empty and bounded.
\item[(F2)]
$F(x_1, y_1, \cdot)$ does not have a nonpositive local maximum/minimum. This guarantees that $Q(x_1, y_1)$ is simply connected.
\end{itemize}
When $F=F(y_1, y')$ we have a periodic oscillating boundary, when $F=F(x_1, y')$ we are in the case of slowly varying thickness, and finally, when $F=F(y')$ the cylinder has uniform cross-section, constant along the cylinder.

The boundary of $\Oe$ consists of the lateral boundary of the cylinder 
\begin{align*}
\Se = \big\{
x = (x_1,x') : x_1 \in I, \,
F\big(x_1,\frac{x_1}{\ve},\frac{x'}{\ve}\big) = 0
\big\},
\end{align*}
and the bases $\Ge=\big\{\pm \frac{1}{2}\big\} \times \ve Q\big(\pm \frac{1}{2}, \pm \frac{1}{\ve}\big)$.

The periodicity cell depends on $x_1$ and is defined as follows
\begin{align*}
\square(x_1) & = \{y = (y_1,y') \in \mathbf{T}^1 \times \mathbf{R}^{d-1} : y_1 \in \mathbf T^1, \, y' \in Q(x_1,y_1)\} \\
& = \{ y \in \mathbf{T}^1 \times \mathbf{R}^{d-1} : F(x_1, y) > 0 \},
\end{align*}
where $\mathbf T^1$ is the one-dimensional torus, realized with unit measure.
Since $F(x_1,y_1, y')$ is periodic in $y_1$, and $F$ is regular, the boundary of the periodicity cell $\partial \Box(x_1) = \{ y : F(x_1,y) = 0 \}$ is Lipschitz.

\subsection{Dirichlet spectral problem in a thin oscillating domain}\label{subsec:problem}

In the thin domains with oscillating boundary $\Omega_\ve$ of Section~\ref{subsec:domain}, we consider the following Dirichlet spectral problem:
\begin{align}\label{eq:originalproblem}
-\mop{div}\big( A\big(x_1, \frac{x}{\ve}\big) \nabla u^\ve \big)  & = \lambda^\ve u^\ve \quad \text{ in } \Omega_\ve,\\
u^\ve &= 0 \hskip 1cm \text{ on } \partial \Omega_\ve. \nonumber
\end{align}

The matrix $A(x_1,y)$ in \eqref{eq:originalproblem} is assumed to be symmetric with entries $a_{ij}$ in $C^2([0,1], L^\infty(\mathbf{T}^d))$, and satisfy the ellipticity condition
\begin{align*}
A(x_1,y) \xi \cdot \xi \ge \alpha |\xi|^2, \quad x_1 \in I, \, \text{a.e. } y \in \square(x_1), \, \xi \in \rr^d.
\end{align*}

By the Hilbert-Schmidt theorem,
for each $\ve > 0$, the spectrum of \eqref{eq:originalproblem} is discrete and may be arranged as follows:
\begin{align*}
0 < \lambda_1^\ve < \lambda_2^\ve \le \lambda_3^\ve \le \cdots, \qquad \lim_{j\to \infty} \lambda_j^\ve = \infty,
\end{align*}
where the eigenvalues are counted as many times as their finite multiplicity.

Normalized by
\begin{align}
\label{eq:normalization-eigenfunctions}
\int_{\Omega_\ve} u_i^\ve u_j^\ve \, dx = \ve^{1/2} \ve^{d-1} \delta_{ij},
\end{align}
where $\delta_{ij}$ is the Kronecker delta, the corresponding eigenfunctions form an orthogonal basis in $L^2(\Oe)$.

We study the asymptotic behavior of the eigenpairs
$(\lambda^\ve, u^\ve)$, as $\ve$ tends to zero. The homogenization result for the eigenpairs $(\lambda^\ve, u^\ve)$, as $\ve \to 0$, is given in Theorem \ref{tm:one}. In order to present the limit problem, we need an auxiliary spectral cell problem, which is introduced in Section \ref{subsec:estimates} (see (\ref{eq:auxproblem})).

\section{Spectral asymptotics}\label{sec:asymptotics}

In this section, we identify the first two terms in the 
asymptotic expansion of the first eigenvalue $\lambda_1^\ve$ to \eqref{eq:originalproblem}:
\begin{align}\label{eq:a1}
\lambda_1^\ve & = \frac{\mu_0(0)}{\ve^2} + \frac{\lambda_1^0}{\ve} + o\Big(\frac{1}{\ve}\Big), \quad \ve \to 0.
\end{align}
For comparison with the result of Theorem~\ref{tm:salmiak},
for smooth $h$ with quadratic growth at its maximum $(m=2)$, one has for the first eigenvalue $\lambda_1^\ve$ of the Dirichlet Laplacian in $\mathbf{R}^2$, 
\begin{align}\label{eq:a2}
\lambda_1^\ve & = \frac{\pi^2/h(0)^2}{\ve^2} + \frac{\mu_1}{\ve} + o\Big(\frac{1}{\ve}\Big), \quad \ve \to 0,
\end{align}
where $\mu_1$ is the first eigenvalue of a harmonic oscillator.
When comparing the asymptotics in~\eqref{eq:a1} and~\eqref{eq:a2}
one notes that the oscillations in the coefficients and the domain
do not change the structure of the asymptotics,
while the factors in both the leading terms $1/\ve^2$ and $1/\ve$
will change.
In particular, the eigenvalue $\lambda_1^\ve$
is still shifted a bit to the right as the factor $\lambda_1^0$ is positive.
One notes also that the factors $\mu_1(0)$ and $\lambda_1^0$ in (\ref{eq:a1}) agree with $\pi^2/h(0)^2$ and $\mu_1$ in (\ref{eq:a2}),
for the Dirichlet Laplacian in $\mathbf{R}^2$ as explained in Section \ref{sec:computing}.

To obtain the asymptotics~\eqref{eq:a1} we will proceed as follows.
First we will derive a priori estimates for the first eigenvalue $\lambda_1^\ve$ to $\eqref{eq:originalproblem}$.
Then we will make a change of variables with the aid of hypothesis \eqref{eq:H}
such that the eigenvalues of the rescaled problem are bounded.
Finally we show the convergence of the corresponding eigenvalues
and their eigenfunction in $L^2$ using the method of two-scale convergence in domains with measure.
The result, which includes~\eqref{eq:a1}, is stated as Theorem~\ref{tm:one}.

\subsection{A priori estimates for the first eigenvalue $\lambda_1^\ve$}
\label{subsec:estimates}

The radius of the cylinder $\Omega_\varepsilon$ is of order $\ve$,
so the Dirichlet condition on the lateral boundary suggests that in view of the variational principle, the first eigenvalue
$\lambda_1^\ve$ should be of order $\ve^{-2}$, as $\ve$ tends to zero.
Consider a smooth function of the form
\begin{align*}
v^\ve(x) & = v(x_1)w(x'/\ve), \quad 
\end{align*}
$v(x_1) \in C_0^\infty(\mathbf{R})$ with support in $I$,
$w(x') \in C_0^\infty(B_0(\rho))$,
where $B_0(\rho)$ is the ball of a small fixed radius $\rho$ centered at the origin in $\mathbf{R}^{d-1}$
such that $I \times \ve B_0(\rho) \subset \Oe$ (c.f.~(F1)). Using $v^\ve(x)$ as a test function 
in the variational principle for $\lambda_1^\ve$, we obtain
\begin{align*}
    \lambda_1^\ve & = \min \limits_{v \in H_0^1(\Oe)\setminus \{0\}} 
    \frac{\int_{\Oe} A\big(x_1, \frac{x}{\ve}\big)\nabla v \cdot \nabla v\, dx}{\int_{\Oe} v^2 \, dx} 
\le C \,
\frac{\int_{\Oe} |\nabla v^\ve|^2 \, dx}{\int_{\Oe} (v^\ve)^2 \, dx} 
 \le \frac{C}{\ve^2},
\end{align*}
for all small enough $\ve$ such that $v^\ve \in H^1_0(\Oe)$,
and for some constant $C$ independent of $\ve$.
To identify the factor in $1/\ve^2$, and to obtain the
corresponding estimate from below,
one needs to choose the test functions more carefully.
In particular, for $O(1/\ve^2)$ to be sharp an optimal oscillating $\ve$-periodic profile $\psi$ along $x_1$ has to be selected, because $|\nabla \psi(x_1/\ve)|^2 = O(1/\ve^2)$.

Let $(\mu_1, \psi_1(x_1, y))$, for each $x_1 \in \overline{I}$,
be the first eigenpair to the following cell eigenvalue problem:
\begin{align}\label{eq:auxproblem}
-\mop{div}_y( A(x_1, y) \nabla_y \psi ) & = \mu(x_1) \psi \quad \text{ in } \square(x_1),\\
\psi &= 0 \hskip 1.5cm \text{ on } \partial \square(x_1),\nonumber
\end{align}
normalized by
\begin{align}\label{eq:auxproblemnormalization}
\int_{\square(x_1)} \psi_1(x_1,y)^2 \,dy & = 1.
\end{align}
By the Krein-Rutman theorem, the first eigenvalue $\mu_1(x_1) > 0$ is simple, and $\psi_1(x_1,y)$ does not change sign in $\square(x_1)$, and may for example be chosen positive.
By the regularity of the coefficients $A$ and the domain $\square(x_1)$ in $x_1$,
and the simplicity of the first eigenvalue $\mu_1(x_1)$, one 
has $\mu_1(x_1) \in C^2(\overline{I})$ (see~\cite{gilbarg2015elliptic,komkov1986design}).

We will use $\psi_1$ to construct a test function in the variational formulation for $\lambda_1^\ve$. Namely, we are going to take a test function as a product of $\psi$, taking care of the transversal oscillations, and a function of $x_1$ only. In the proof of Lemma \ref{lm:lambdaestimate} we will see that to minimize the ratio in the variational principle, the latter function should localize in the vicinity of the minimum point of $\mu_1$. This motivates the following structural assumption. 
\begin{align}\label{eq:H}
\mu_1(x_1) \text{ has a unique global minimum at $x_1 = 0$, and $\mu''_1(0)>0$.}
\end{align}
We will come back to this assumption in Section \ref{sec:computing} and show how it is related to the assumptions on the geometry of the cylinder in Theorem~\ref{tm:salmiak}, \cite{FrSo-2009}.

The following a priori estimate holds for the first eigenvalue.

\begin{lemma}\label{lm:lambdaestimate}
Let $\lambda_1^\ve$ be the first eigenvalue to \eqref{eq:originalproblem}, and the first eigenvalue $\mu_1(x_1)$ to \eqref{eq:auxproblem}
satisfies \eqref{eq:H}.
Then
\begin{align*}
\lambda_1^\ve = \frac{\mu_1(0)}{\ve^2} + O\Big(\frac{1}{\ve}\Big), \quad \ve \to 0.
\end{align*}
\end{lemma}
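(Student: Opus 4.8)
The plan is to establish the two matching bounds $\lambda_1^\ve \le \mu_1(0)\ve^{-2} + C\ve^{-1}$ and $\lambda_1^\ve \ge \mu_1(0)\ve^{-2} - C\ve^{-1}$ separately, both via the variational (Rayleigh quotient) characterization of $\lambda_1^\ve$ already displayed in the text. For the upper bound I would build a test function of the product form suggested just before the lemma, namely $v^\ve(x) = \varphi_\ve(x_1)\,\psi_1\big(x_1, \tfrac{x}{\ve}\big)$, where $\psi_1(x_1, y)$ is the first cell eigenfunction of \eqref{eq:auxproblem} (extended by zero outside $\square(x_1)$ so that the Dirichlet condition on $\Se$ is automatically met), and $\varphi_\ve$ is a cutoff concentrating near $x_1 = 0$. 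Plugging this in, the dominant contribution to the numerator is $\ve^{-2}\int_{\Oe} A\nabla_y\psi_1\cdot\nabla_y\psi_1$, which by \eqref{eq:auxproblem} and the normalization \eqref{eq:auxproblemnormalization} equals $\ve^{-2}\int \mu_1(x_1)\varphi_\ve^2\,\cdots$; since $\varphi_\ve$ localizes near $0$ and $\mu_1$ is continuous with minimum $\mu_1(0)$, this is $(\mu_1(0)+o(1))\ve^{-2}$ times the denominator. The cross terms and the $|\nabla_{x_1}\psi_1|^2$, $|\varphi_\ve'|^2$ terms are lower order: the $\varphi_\ve'$ term is where the localization width enters, and choosing the width of $\varphi_\ve$ to scale like $\sqrt\ve$ (so that $\int|\varphi_\ve'|^2 / \int \varphi_\ve^2 = O(1/\ve)$) balances it against the $O(1/\ve)$ correction, giving $\lambda_1^\ve \le \mu_1(0)\ve^{-2} + C\ve^{-1}$.

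For the lower bound the argument is the reverse: take an arbitrary $v \in H_0^1(\Oe)$, freeze $x_1$, and on each slice $\{x_1\}\times \ve Q(x_1, x_1/\ve)$ apply the variational inequality for the cell problem \eqref{eq:auxproblem} after the transversal rescaling $x' = \ve y'$. This yields, slicewise,
\begin{align*}
\int_{\ve Q} A\nabla_{x'} v\cdot \nabla_{x'} v\, dx' \ge \frac{\mu_1(x_1)}{\ve^2}\int_{\ve Q} v^2\, dx',
\end{align*}
because the first eigenvalue of $-\di{A\nabla_{y'}\cdot}$ on $Q(x_1,x_1/\ve)$ with Dirichlet data is $\mu_1(x_1)$ up to a harmless discrepancy between the full cell problem on $\square(x_1)$ (which also sees the $y_1$-direction) and the pure cross-section problem — this gap will have to be controlled. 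Integrating over $x_1 \in I$, discarding the nonnegative $x_1$-derivative part of the Dirichlet form, and using $\mu_1(x_1) \ge \mu_1(0)$ from \eqref{eq:H}, one gets $\int_{\Oe} A\nabla v\cdot\nabla v \ge \mu_1(0)\ve^{-2}\int_{\Oe} v^2$, hence $\lambda_1^\ve \ge \mu_1(0)\ve^{-2}$. To sharpen $0$ to $-C\ve^{-1}$ one refines this: since $\mu_1''(0) > 0$, near the minimum $\mu_1(x_1) \ge \mu_1(0) + c x_1^2$, and one must argue that an eigenfunction with Rayleigh quotient close to $\mu_1(0)\ve^{-2}$ cannot have too much mass far from $x_1=0$; combining the quadratic lower bound on $\mu_1$ with the cost of the $x_1$-derivative (an uncertainty-principle / harmonic-oscillator type estimate) produces the $O(\ve^{-1})$ loss in the correct direction.

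The main obstacle I anticipate is the mismatch between the cell problem \eqref{eq:auxproblem}, which is posed on the full periodicity cell $\square(x_1) \subset \ttt^1 \times \rr^{d-1}$ and therefore allows variation in $y_1$, and the naive ``slice by slice'' comparison in the lower bound, which would only see the cross-section $Q(x_1, x_1/\ve)$. One must show that the genuinely two-scale structure — the boundary $\Se$ oscillates in $x_1/\ve$, so the cross-section at $x_1$ is $Q(x_1, x_1/\ve)$, not an $x_1$-averaged object — is correctly captured by $\mu_1(x_1)$, the first eigenvalue of the full cell problem. This requires either a careful rescaling $y_1 = x_1/\ve$ turning $x_1$-oscillations into genuine periodicity in $y_1$, or the two-scale machinery of \cite{pettersson2017two,zhikov2000extension} to justify that the effective transversal confinement seen by $v$ is exactly $\mu_1(x_1)\ve^{-2}$; the error terms from the $C^2$-regularity of $F$ and $A$ in $x_1$ (freezing $x_1$ over an $\ve$-cell introduces $O(\ve)$ perturbations in the coefficients and domain, hence $O(\ve)$, i.e.\ $O(\ve^{-1})$ after division, perturbations in the confinement constant) are exactly of the size permitted by the claimed $O(1/\ve)$ remainder, so bookkeeping these perturbations and showing they do not exceed that order is the technical heart of the proof.
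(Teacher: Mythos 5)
Your upper-bound argument is the same as the paper's: take $v^\ve(x) = v(x_1/\sqrt\ve)\,\psi_1(x_1,x/\ve)$, scale the localization width like $\sqrt\ve$ so that the $|\varphi_\ve'|^2$ contribution is $O(1/\ve)$, and use the cell equation and normalization to extract $\mu_1(0)/\ve^2$. That part is fine.

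Your lower-bound argument, however, has a genuine gap that cannot be patched by the bookkeeping you describe. Comparing slice by slice with the first Dirichlet eigenvalue on the cross section $Q(x_1, x_1/\ve)$ would give $\lambda_1^\ve \ge \ve^{-2}\inf_{y_1}\mu^{\mathrm{slice}}(0,y_1) + O(\ve^{-1})$, where $\mu^{\mathrm{slice}}(x_1,y_1)$ is the first eigenvalue of the $(d-1)$-dimensional Dirichlet problem on the cross section $Q(x_1,y_1)$. But $\mu_1(x_1)$ in \eqref{eq:auxproblem} is the first eigenvalue of the \emph{$d$-dimensional} cell problem on $\square(x_1)\subset \mathbf{T}^1\times\rr^{d-1}$, which also measures $y_1$-variation, and one has strictly $\inf_{y_1}\mu^{\mathrm{slice}}(x_1,y_1) < \mu_1(x_1)$ in general (already in $d=2$ with $A=I$ and an oscillating profile $h(x_1,y_1)$: the slicing infimum is $\pi^2/(\max_{y_1}h)^2$, which is smaller than the cell eigenvalue by an $O(1)$ amount, hence an $O(\ve^{-2})$ error after rescaling). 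You correctly flag this as "the main obstacle" and "the technical heart", but the fixes you suggest do not close it: neither freezing the coefficients over a cell (an $O(\ve)$ perturbation) nor a generic two-scale argument turns the slice eigenvalue into the cell eigenvalue; the discrepancy is structural, not perturbative.

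The paper's resolution is a factorization identity, not a slicewise comparison. One writes $u_1^\ve(x)=\psi_1(x_1,x/\ve)\,v_1^\ve(x)$ and substitutes this into the Dirichlet form; integrating by parts and using that $\psi_1$ solves the cell problem \eqref{eq:auxproblem} in the fast variable produces the exact identity (with the normalization $\int_{\Oe}(u_1^\ve)^2=1$)
\begin{align*}
\lambda_1^\ve
&= \int_{\Oe} (\psi_1^2 A)\big(x_1,\tfrac{x}{\ve}\big)\,\nabla v_1^\ve\cdot\nabla v_1^\ve\,dx
 + \int_{\Oe}\Big(\psi_1 b + \ve^{-1}\psi_1 c + \ve^{-2}\mu_1\,\psi_1^2\Big)\big(x_1,\tfrac{x}{\ve}\big)\,(v_1^\ve)^2\,dx,
\end{align*}
with $b,c$ the bounded correctors \eqref{eq:b}--\eqref{eq:c}. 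The first integral is nonnegative by ellipticity, $\mu_1(x_1)\ge\mu_1(0)$ by hypothesis \eqref{eq:H}, and the $b,c$ terms are $O(\ve^{-1})$, giving $\lambda_1^\ve\ge\mu_1(0)/\ve^2 - C/\ve$ directly. This is both why $\mu_1$ (the $d$-dimensional cell eigenvalue) appears, and why the remainder is only $O(1/\ve)$: the $\ve^{-2}$ part is not estimated but extracted exactly by the factorization. Also note the refinement you propose at the end (harmonic-oscillator / uncertainty-principle estimate using $\mu_1''(0)>0$) is not needed here: the quadratic growth of $\mu_1$ is not used in Lemma~\ref{lm:lambdaestimate}, only $\mu_1(x_1)\ge\mu_1(0)$; the $\mu_1''(0)$ hypothesis is used later (e.g.\ Lemma~\ref{lm:v1veestimate}) to localize the eigenfunction.
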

\begin{proof}
In the Rayleigh quotient, 
\begin{align*}
\lambda_1^\ve = \min_{v \in H^1_0(\Oe) \setminus \{ 0 \}} \frac{ \int_{\Oe} A(x_1,\frac{x}{\ve})\nabla v \cdot \nabla v \,dx }{\int_{\Oe} v^2 \,dx},
\end{align*}
we will use test functions of the form
\begin{align*}
v^\ve(x) & = \psi_1\Big(x_1, \frac{x}{\ve}\Big)v\Big(\frac{x_1}{\sqrt{\ve}}\Big),
\end{align*}
where $\psi_1>0$ is the first eigenfunction to \eqref{eq:auxproblem},
and $v \in C^\infty_0(\rr)\setminus\{0\}$ with support contained in $I$. Clearly $v^\ve \in H^1_0(\Oe)$.
The Taylor theorem and hypothesis \eqref{eq:H} give
\begin{align*}
\mu_1(\sqrt{\ve} x_1) & = \mu_1(0) + \ve \mu_1''(0) x_1^2 + o(\ve x_1^2), \quad \ve \to 0.
\end{align*}
Using the boundedness of $a_{ij}$, the regularity properties of $\psi_1$,
re-scaling and integrating by parts, we obtain
\begingroup
\allowdisplaybreaks
\begin{align*}
\lambda_1^\ve
& \le \frac{ \int_{\Omega_\ve} A\big(x_1, \frac{x}{\ve}\big)
\nabla v^\ve \cdot \nabla v^\ve \,dx }{  \int_{\Omega_\ve} (v^\ve)^2 \,dx  } \\
& = \frac{ \ve^{-1}\int_{\rr^d}  (\psi_1^2 A)\big(\sqrt{\ve}x_1, \frac{x}{\sqrt{\ve}}\big) \nabla v \cdot \nabla v \,dx  }{ \int_{\rr^d} \psi_1^2\big(\sqrt{\ve}x_1, \frac{x}{\sqrt{\ve}}\big)  v^2 \,dx } \\
& \quad + \frac{  \int_{\rr^d} (\psi_1 b + \ve^{-1} \psi_1 c + \ve^{-2}\mu_1 \psi_1^2)\big(\sqrt{\ve}x_1, \frac{x}{\sqrt{\ve}}\big) v^2  \,dx }{ \int_{\rr^d} \psi_1^2\big(\sqrt{\ve}x_1, \frac{x}{\sqrt{\ve}}\big) v^2 \,dx } \\
& \le \frac{\mu_1(0)}{\ve^2} + \frac{C}{\ve},
\end{align*}
\endgroup
for some absolute constant $C$ which is independent of $\ve$,
where
\begin{align}
b(x_1,y)  = &- \mop{div}_x(A(x_1,y)\nabla_x \psi_1(x_1,y)), \label{eq:b} \\
c(x_1,y)  = &-  \mop{div}_y(A( x_1, y ) \nabla_x \psi_1(x_1,y)) \label{eq:c}
\\ 
& - \mop{div}_x(A( x_1, y ) \nabla_y \psi_1(x_1,y)).\nonumber
\end{align}

We proceed with the estimate from below.
Let $u_1^\ve$ be the first eigenfunction to~\eqref{eq:originalproblem},
normalized by $\int_{\Oe} (u_1^\ve)^2 \,dx = 1$, 
and let $v_1^\ve$ be such that
\begin{align*}
u_1^\ve(x) = \psi_1\big(x_1,\frac{x}{\ve}\big)v_1^\ve(x).
\end{align*}
Then by the ellipticity condition for $A$, 
by hypothesis \eqref{eq:H} that $\mu_1(0)$ is the minimal value of $\mu_1(x_1)$, and the uniform boundedness of $b(x_1,y)$ and $c(x_1,y)$, we get
\begin{align*}
\lambda_1^\ve & = 
 \int_{\Oe} (\psi_1^2 A)\big(x_1, \frac{x}{\ve}\big) \nabla v_1^\ve \cdot \nabla v_1^\ve  \, dx \\
 & \quad + \int_{\Oe} ( \psi_1 b + \ve^{-1} \psi_1 c  + \ve^{-2}\mu_1 \psi_1^2)\big(x_1,\frac{x}{\ve}\big) (v_1^\ve)^2  \, dx \\
 & \ge \frac{\mu_1(0)}{\ve^2} - \frac{C}{\ve},
\end{align*}
which gives the desired estimate.
\end{proof}

It follows from the estimate in Lemma~\ref{lm:lambdaestimate} that
the first eigenfunction $u_1^\ve$ concentrates (or localizes)
in the vicinity of the minimum point of $\mu_1(x_1)$ (here $x_1=0$).
Namely, for any $\gamma > 0$, 
\begin{align*}
\int_{\Oe\setminus B_\gamma(0)}(u_1^\ve)^2 \, dx & < \gamma, 
\end{align*}
for all small enough $\ve$,
where $B_\gamma(0)$ is the ball of radius $\gamma$ centered at the origin (c.f.~Lemma 3.3 in~\cite{ChPaPi-13}).

\subsection{Spectral asymptotics}

We introduce the homogenized problem for eigenpairs $(\nu, v)$ on $\mathbf{R}$:
\begin{align}\label{eq:limitproblem}
- \frac{d}{dz_1}\big( a^{\mathrm{eff}} \frac{d v}{dz_1} \big) + (c^\mathrm{eff} + \frac{1}{2}\mu_1''(0) z^2_1 )v
& = \nu v, \quad z_1 \in \rr,
\end{align}
where the constant coefficients $a^\mathrm{eff}$, $c^\mathrm{eff}$ are defined as follows.
Again, $\mu_1, \psi_1$ is the first eigenpair to \eqref{eq:auxproblem} normalized by \eqref{eq:auxproblemnormalization}.
Let
\begin{align*}
c^\mathrm{eff} & = -\int_{\square(0)} \psi_1(0,y) \big(  \mop{div}_y(A( x_1, y ) \nabla_x \psi_1(x_1,y))  \\
& \qquad\qquad\qquad\qquad\,\,                           + \mop{div}_x(A( x_1, y ) \nabla_y \psi_1(x_1,y))  \big) \Big|_{x_1 = 0} \,dy,
\end{align*}
and the effective coefficient $a^\mathrm{eff}$ is such that $a^\mathrm{eff} > 0$, and it is given by
\begin{align}
\label{eq:a^eff}
a^\mathrm{eff} & = \int_{\square(0)} \sum_{j=1}^d \psi_1^2(0,y)a_{1j}(0,y) (\delta_{1j} + \partial_{y_j}N(y)) \, dy,
\end{align}
with $N \in H^1(\square(0),\psi_1^2(0,y))$ the unique solution such that
\begin{align*}
\int_{\square(0)} N^2(y) \psi_1^2(0,y) \,dy & = 0,
\end{align*}
to the following auxiliary cell problem:
\begin{align}\label{eq:cellproblemN1}
-\mop{div}_{y} ( (\psi_1^2 A)(0,y) \nabla_y N ) & = \sum_{j=1}^d \partial_{y_j} (\psi_1^2 a_{j1})(0,y), \quad y \in \square(0).
\end{align}

In the definition of the effective coefficient $a^\eff$,
the weighted Sobolev space $H^1(\square(0),\psi_1^2(0,y))$ is used.
Denote
\begin{align*}
L^2(\square(0),\psi_1^2(0,y))
& = \Big\{ v : \int_{\square(0)} v^2(y) \psi_1^2(0,y) \,dy  < \infty \Big\},
\end{align*}
the weighted Lebesgue space.
Then $H^1(\square(0),\psi_1^2(0,y))$ is defined as
\begin{align*}
& H^1(\square(0),\psi_1^2(0,y)) \\ 
& \quad = \{ v \in L^2(\square(0),\psi_1^2(0,y)) : \nabla v \in L^2(\square(0),\psi_1^2(0,y)) \},
\end{align*}
with inner product
\begin{align*}
(u,v)_{H^1(\square(0),\psi_1^2(0,y))} & = \int_{\square(0)} u \, v\,  \psi_1^2(0,y) \,dy
+ \int_{\square(0)} (\nabla_y u \cdot \nabla_y v) \psi_1^2(0,y) \,dy.
\end{align*}
One notes that $\psi_1^2(0,y) > 0$ for $y \in \square(0)$ and $\psi_1^2(0,y) = 0$
for $y$ on $\partial \square(0)$.
Moreover, 
\begin{align}\label{eq:weightintegrability}
\psi_1^2(0,y) & \in L^1_{\mathrm{loc}}(\square(0)), \qquad
\frac{1}{\psi_1^2(0,y)} \in L^1_{\mathrm{loc}}(\square(0)).
\end{align}
The first property in \eqref{eq:weightintegrability} ensures that $C^\infty(\square(0))$
belongs to\linebreak[4] $H^1(\square(0),\psi_1^2(0,y))$,
and the second property in \eqref{eq:weightintegrability} ensures that\linebreak[4] $H^1(\square(0),\psi_1^2(0,y))$ is a
Hilbert space (c.f. \cite{necas,kufner1987some}).


In the cell problem \eqref{eq:cellproblemN1}, the classical non-homogeneous
Neumann condition is not there because the weight $\psi_1^2(0,y)$ effectively
removes the boundary $\partial \square(0)$.

\begin{lemma}
The spectrum of the harmonic oscillator problem (\ref{eq:limitproblem}) is discrete
\[
\nu_1 < \nu_2 < \ldots < \nu_j \to \infty, \quad j\to \infty.
\]
The corresponding eigenfunctions $v_j \in L^2(\rr)$ can be normalized by 
\begin{align}
\label{eq:normalization-v}
\int_{\rr} v_i v_j \, dx_1 = \delta_{ij}.
\end{align}
\end{lemma}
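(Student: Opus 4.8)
The statement concerns a one-dimensional Schr\"odinger operator with a confining quadratic potential, so the plan is to realize it as a self-adjoint operator with compact resolvent and then reduce it by an affine change of variable to the classical harmonic oscillator, whose spectrum is explicit. First I would define the operator in \eqref{eq:limitproblem} through the quadratic form
\[
\mathcal{B}(v,v) = \int_{\mathbf{R}} \Big( a^{\eff}\, |v'(z_1)|^2 + \big( c^{\eff} + \tfrac12 \mu_1''(0)\, z_1^2 \big)\, |v(z_1)|^2 \Big)\, dz_1,
\]
with form domain $V = \{ v \in H^1(\mathbf{R}) : z_1 v \in L^2(\mathbf{R}) \}$. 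Since $a^{\eff} > 0$ and, by \eqref{eq:H}, $\mu_1''(0) > 0$, the integrand is bounded below by $c^{\eff}|v|^2$, so $\mathcal{B}$ is semibounded; it is also closed on $V$, hence by the first representation theorem it is associated with a self-adjoint operator $L$ on $L^2(\mathbf{R})$ acting as $Lv = -a^{\eff} v'' + (c^{\eff} + \tfrac12 \mu_1''(0) z_1^2) v$ on smooth compactly supported functions. This $L$ is the operator of \eqref{eq:limitproblem}.

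Next I would introduce the rescaling $z_1 = \beta s$ with $\beta^2 = (2 a^{\eff}/\mu_1''(0))^{1/2}$, implemented by the unitary operator $(Uw)(z_1) = \beta^{-1/2} w(z_1/\beta)$ on $L^2(\mathbf{R})$. A direct computation gives $U^{-1} L U = \kappa\big( -d^2/ds^2 + s^2 \big) + c^{\eff}$ with $\kappa = (a^{\eff}\mu_1''(0)/2)^{1/2} > 0$, so $L$ is unitarily equivalent to a dilated and shifted harmonic oscillator. Since the spectrum of $-d^2/ds^2 + s^2$ on $L^2(\mathbf{R})$ consists of the simple eigenvalues $2k+1$, $k = 0, 1, 2, \dots$, with the Hermite functions forming an orthonormal eigenbasis, it follows that the spectrum of $L$ is purely discrete, consisting of the simple eigenvalues $\nu_j = \kappa(2j-1) + c^{\eff}$, $j \ge 1$, whence $\nu_1 < \nu_2 < \cdots$ and $\nu_j \to \infty$; the eigenfunctions $v_j$ are the corresponding rescaled Hermite functions, which belong to $L^2(\mathbf{R})$ and can therefore be normalized as in \eqref{eq:normalization-v}.

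If one prefers to avoid invoking the explicit spectrum of the harmonic oscillator, discreteness can be obtained instead from the compactness of the embedding $V \hookrightarrow L^2(\mathbf{R})$: a sequence bounded in $V$ is bounded in $H^1_{\mathrm{loc}}$, so Rellich's theorem yields local strong $L^2$-convergence of a subsequence, while the uniform bound on $\int_{\mathbf{R}} z_1^2 |v_n|^2\, dz_1$ gives $\int_{|z_1| > R} |v_n|^2\, dz_1 \le C R^{-2}$ and hence tightness at infinity; thus $L$ has compact resolvent and discrete spectrum accumulating only at $+\infty$, and the simplicity of the eigenvalues follows from Sturm--Liouville theory (the equation is limit point at $\pm\infty$ since the potential grows, so each eigenspace is one-dimensional). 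The only mildly delicate points are fixing the self-adjoint realization via the form $\mathcal{B}$ and checking that the rescaling is a genuine unitary equivalence; neither presents any real difficulty, and indeed this lemma is essentially a standard fact about confining one-dimensional Schr\"odinger operators.
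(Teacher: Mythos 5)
Your proof is correct. The paper itself states this lemma without giving any proof, treating it as a standard fact about one-dimensional Schr\"odinger operators with confining potentials, so there is no argument in the text to compare against. Your two routes are both sound: the explicit unitary rescaling $z_1 = \beta s$ with $\beta^2 = (2a^{\eff}/\mu_1''(0))^{1/2}$ correctly reduces the operator $-a^{\eff}\,d^2/dz_1^2 + c^{\eff} + \tfrac12\mu_1''(0)z_1^2$ to $\kappa(-d^2/ds^2 + s^2) + c^{\eff}$ with $\kappa = (a^{\eff}\mu_1''(0)/2)^{1/2}$, which gives the discrete simple spectrum $\nu_j = \kappa(2j-1) + c^{\eff}$ and orthonormal rescaled Hermite eigenfunctions directly; the alternative compact-resolvent argument (Rellich locally, tightness at infinity from the bound on $\int z_1^2|v_n|^2\,dz_1$, simplicity from the limit-point property at $\pm\infty$) reaches the same conclusion without invoking the explicit Hermite spectrum. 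Both hinge on exactly the hypotheses the paper makes available at this point, namely $a^{\eff}>0$ (established in the passage-to-the-limit lemma) and $\mu_1''(0)>0$ from hypothesis \eqref{eq:H}, so the proposal is a complete and appropriate justification of the omitted lemma.
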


\begin{theorem}\label{tm:one}
Suppose that~\eqref{eq:H} holds.
Let $\lambda_i^\ve$, $u^\ve_i$ be the $i$th eigenpair to \eqref{eq:originalproblem},
normalized by \eqref{eq:normalization-eigenfunctions}, and 
 $\mu_1$, $\psi_1$ be the first eigenpair to \eqref{eq:auxproblem}, 
$\psi_1$ normalized by \eqref{eq:Psi1normalization}.
Then
\begin{align*}
&(i)&&\lambda_i^\ve  = \frac{\mu_1(0)}{\ve^2} + \frac{\lambda_i^0}{\ve} + o\Big( \frac{1}{\ve} \Big), \quad \ve \to 0, \\
&(ii)&&\frac{1}{\ve^{d-1} \sqrt{\ve}}\int_{\Oe} \Big|u^\ve_i(x) -\psi_1\Big(0, \frac{x}{\ve} \Big)v_i^0\Big( \frac{x_1}{\sqrt{\ve}}\Big)\Big|^2\, dx  \to 0, \quad \ve \to 0.
\end{align*}
Here $(\nu_i^0, v_i^0)$ is the $i$th eigenpair to \eqref{eq:limitproblem}, normalized by \eqref{eq:normalization-v}. 

\end{theorem}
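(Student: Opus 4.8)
The plan is to combine the factorization $u_i^\ve=\psi_1(x_1,x/\ve)\,v_i^\ve$ with a $\sqrt\ve$-rescaling in the longitudinal variable and then pass to a two-scale limit, in the sense of measures, in the resulting weighted, degenerate spectral problem; this is the route already foreshadowed by Lemma~\ref{lm:lambdaestimate} and the localization remark following it.

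First I would set up the reduced problem. Writing $u_i^\ve=\psi_1(x_1,x/\ve)v_i^\ve$ and using that $\psi_1(x_1,\cdot)$ solves \eqref{eq:auxproblem}, the weak eigenvalue identity for $u_i^\ve$ becomes, exactly as in the proof of Lemma~\ref{lm:lambdaestimate},
\begin{align*}
\lambda_i^\ve\int_{\Oe}\psi_1^2\,\varphi\,v_i^\ve\,dx
&=\int_{\Oe}(\psi_1^2A)\nabla v_i^\ve\cdot\nabla\varphi\,dx
+\int_{\Oe}\big(\psi_1 b+\tfrac1\ve\psi_1 c+\tfrac1{\ve^2}\mu_1\psi_1^2\big)\varphi\,v_i^\ve\,dx,
\end{align*}
for all admissible $\varphi$, with $b,c$ from \eqref{eq:b}--\eqref{eq:c} and all coefficients evaluated at $(x_1,x/\ve)$; thus $v_i^\ve$ is the $i$th eigenfunction of a self-adjoint problem posed in the weighted space $H^1(\Oe,\psi_1^2(x_1,x/\ve))$, which is a Hilbert space by \eqref{eq:weightintegrability}. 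I would then introduce the shifted eigenvalue $\beta_i^\ve:=\ve\lambda_i^\ve-\ve^{-1}\mu_1(0)$: Lemma~\ref{lm:lambdaestimate} gives $\beta_1^\ve=O(1)$, and testing the Rayleigh quotient with the functions $\psi_1(x_1,x/\ve)v_j(x_1/\sqrt\ve)$, $j=1,\dots,i$, where $v_1,\dots,v_i$ are the first eigenfunctions of \eqref{eq:limitproblem}, yields via the min--max principle both $\beta_i^\ve=O(1)$ and $\limsup_{\ve\to0}\beta_i^\ve\le\nu_i^0$. After rescaling $z_1=x_1/\sqrt\ve$, Taylor expansion and \eqref{eq:H} turn $\ve^{-1}\big(\mu_1(x_1)-\mu_1(0)\big)$ into $\tfrac12\mu_1''(0)z_1^2+o(1)$, the harmonic-oscillator potential of \eqref{eq:limitproblem}.

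Next I would extract a priori bounds: multiplying the reduced identity by $\ve$, taking $\varphi=v_i^\ve$, integrating the a priori $O(\ve^{-1})$ term $\ve^{-1}\psi_1 c$ by parts to see it is in fact $O(1)$, and using ellipticity of $A$, boundedness of $b,c$ and the normalization, I obtain uniform bounds on $\ve\int_{\Oe}(\psi_1^2A)|\nabla v_i^\ve|^2\,dx$, on $\int_{\Oe}\psi_1^2(x_1,x/\ve)(v_i^\ve)^2\,dx$, and on $\int_{\Oe}\mu_1(x_1)\psi_1^2(v_i^\ve)^2\,dx$. Because \eqref{eq:H} makes $\mu_1(x_1)-\mu_1(0)$ comparable to $x_1^2$ near $0$ and bounded below away from $0$, the last bound gives tightness in $z_1$ of the weighted measures $\psi_1^2(\sqrt\ve z_1,\cdot)(v_i^\ve)^2\,dz_1$, consistently with the localization after Lemma~\ref{lm:lambdaestimate}. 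Equipping the rescaled $\Oe$ with the appropriately normalized Lebesgue measure, which converges by the $C^2$-regularity of $F$ to the limit measure on $\rr_{z_1}\times\square(0)$, I would invoke the two-scale compactness theorem of \cite{zhikov2000extension,Zh-2004,pettersson2017two}: along a subsequence $v_i^\ve$ two-scale converges to some $v_0$, the gradient bound together with the weighted structure forces $v_0=v_0(z_1)$ (the dimension reduction), and $\nabla v_i^\ve$ two-scale converges to a limit of the form $\nabla_y v_1$ with $v_1(z_1,y)=v_0(z_1)+\partial_{z_1}v_0(z_1)N(y)$, $N$ the weighted corrector solving \eqref{eq:cellproblemN1}. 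Passing to the limit in the reduced identity tested against $\psi_1(z_1,y)\phi(z_1)$: the quadratic gradient term yields $a^\eff\partial_{z_1}v_0\,\phi'$ with $a^\eff$ from \eqref{eq:a^eff} after eliminating the corrector in the usual way, the term $\ve^{-1}\psi_1 c$ yields $c^\eff v_0\phi$, the rescaled potential yields $\tfrac12\mu_1''(0)z_1^2 v_0\phi$, and the right-hand side yields $\nu\,v_0\phi$ with $\nu=\lim\beta_i^\ve$; hence $(\nu,v_0)$ solves \eqref{eq:limitproblem}. Combining the upper bound $\limsup\beta_i^\ve\le\nu_i^0$ with the strict ordering $\nu_1<\nu_2<\cdots$ and the standard spectral-convergence argument (orthonormality of the $u_i^\ve$ passes to the two-scale limits, which therefore span enough dimensions; cf.~\cite{vishik1957regular,oleinik2009mathematical}) identifies $\lim\beta_i^\ve=\nu_i^0$ and $v_0=v_i^0$, which is (i). Convergence of the Rayleigh numerators upgrades the two-scale convergence of $v_i^\ve$ to a strong one, and together with the localization at $x_1=0$ and the $C^2$-dependence of $\psi_1$ on $x_1$ (so that $\psi_1(x_1,x/\ve)$ may be replaced by $\psi_1(0,x/\ve)$ on the support, up to $o(1)$ relative $L^2$-error) this yields (ii).

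The hard part will be the degeneracy of the weight: $\psi_1$ vanishes on $\partial\square(x_1)$, so $v_i^\ve$ and the corrector $N$ live in weighted spaces, and both the admissibility of the two-scale machinery and the fact that \eqref{eq:cellproblemN1} carries no Neumann term (the weight ``erases'' the boundary) hinge on the integrability \eqref{eq:weightintegrability}. A second technical point is the interplay of the two scales $\ve$ (the $x_1$-dependent periodicity cell $\square(x_1)$) and $\sqrt\ve$ (the localization envelope): the limit measure and the class of admissible oscillating test functions must be chosen so that, after integration by parts, the a priori $O(\ve^{-1})$ term $\ve^{-1}\psi_1 c$ contributes only the bounded quantity $c^\eff$, and so that $\ve^{-1}(\mu_1(x_1)-\mu_1(0))$ produces exactly the potential $\tfrac12\mu_1''(0)z_1^2$. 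The residual spectral bookkeeping --- that the $i$th eigenvalue converges to the $i$th, not merely to some eigenvalue of \eqref{eq:limitproblem} --- is routine given the simplicity of the limit spectrum and the trial-function upper bounds.
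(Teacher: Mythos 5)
Your proposal follows essentially the same route as the paper: factorization by $\psi_1$, the $\sqrt\ve$ longitudinal rescaling, the shift $\nu^\ve=\ve\lambda^\ve-\mu_1(0)/\ve$, a priori bounds in the weighted degenerate Sobolev space and in the thin measure $d\mu_\ve$, two-scale compactness with the corrector $N$ solving \eqref{eq:cellproblemN1}, and finally the standard spectral bookkeeping combining a trial-function upper bound with the subsequence limit identification.

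One step as written would not go through. You claim that testing the Rayleigh quotient with the \emph{uncorrected} functions $\psi_1(x_1,x/\ve)\,v_j(x_1/\sqrt\ve)$ already yields $\limsup_{\ve\to 0}\beta_i^\ve\le\nu_i^0$. That is false in general: after rescaling, the gradient term in the Rayleigh quotient for such trial functions produces the averaged coefficient $\int_{\square(0)}\psi_1^2(0,y)\,a_{11}(0,y)\,dy$, which strictly dominates $a^\eff$ (this is the usual variational inequality for the homogenized coefficient; equality holds only if $N_1\equiv 0$). So without the corrector, your upper bound is some $\tilde\nu_i>\nu_i^0$, and the spectral identification (that $\lim\beta_i^\ve$ is exactly the $i$th eigenvalue, not a larger one) collapses. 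The paper's trial function $w_\ve(z)=\phi^\ve(\sqrt\ve z_1)\big(v_j(z_1)+\sqrt\ve\,N_1(z/\sqrt\ve)\,\partial_{z_1}v_j(z_1)\big)$, with a smooth cutoff $\phi^\ve$ so that $w_\ve$ is admissible on the finite rescaled interval $\ve^{-1/2}I$, repairs this: with the corrector, the leading gradient contribution is precisely $a^\eff|\partial_{z_1}v_j|^2$, and the remaining terms pass to $c^\eff+\tfrac12\mu_1''(0)z_1^2$, giving $\limsup\beta_i^\ve\le\nu_i^0$. Since you already invoke $N$ to characterize the two-scale limit of $\nabla v_i^\ve$, the fix is consistent with the rest of your argument; but it must appear explicitly in the trial functions, and you should note the cutoff, since $v_j$ has unbounded support and the rescaled domain is bounded. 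Aside from this, a small bookkeeping clarification: after multiplying the factorized identity by $\ve$ and subtracting $\mu_1(0)/\ve$, the term you call $\ve^{-1}\psi_1 c$ already carries a factor $\ve$ and is $O(1)$ without any integration by parts — the chain rule from $z=x/\sqrt\ve$ does the work.
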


The Dirichlet condition for $u^\ve_1$ on the lateral boundary of the cylinder $\Oe$
is captured in the limit by the radial profile $\psi_1(0,y)$ solving \eqref{eq:auxproblem}, 
while the Dirichlet condition on the ends/bases of the cylinder $\Oe$
is translated into exponential decay of the longitudinal profile $v$ solving \eqref{eq:limitproblem} as $|z_1| \to \infty$.

\section{Proof of Theorem \ref{tm:one}}
\subsection{Rescaled and shifted problem}
Lemma \ref{lm:lambdaestimate} suggests studying the asymptotics of
\begin{align*}
\lambda_1^\ve - \frac{\mu_1(0)}{\ve^{2}} = O(1),
\end{align*}
as $\ve$ tends to zero. Moreover, the first eigenfunction tends to localize in the vicinity of $x_1=0$, which is the minimum point of $\mu_1(x_1)$.
Let us subtract $\mu_1(0)/\ve^2$ from both sides of the equation (\ref{eq:originalproblem}), shifting the spectrum to the left, and make the following change of unknowns:
\begin{align}\label{eq:newvariables}
z & = \frac{x}{\sqrt{\ve}}, &
u^\ve ( \sqrt{\ve}z ) & = \psi_1\Big(  \sqrt{\ve}z_1, \frac{z}{\sqrt{\ve}}  \Big)v^\ve(z)\equiv (\psi_1^\ve(z))^2 v^\ve(z) .
\end{align}
The corresponding problem in the up-scaled domain
\begin{align*}
\widetilde{\Oe} = \frac{1}{\sqrt{\ve}} \Oe,    
\end{align*}
takes the form
\begin{align}
 - \mop{div}_z \big( \widetilde{A^\ve} \nabla_z v^\ve \big)
 +  \big(C^\ve& + \frac{ \mu_1(\sqrt \ve z_1) - \mu_1(0) }{\ve}(\psi_1^\ve)^2 \big) v^\ve \notag\\
& = \nu^\ve (\psi_1^\ve)^2 v^\ve \qquad\, \text{ in } \widetilde{\Oe},\label{eq:rescaled}\\
v^\ve & = 0 \qquad\qquad\qquad \text{ on } \widetilde{\Sigma_\ve}.\label{eq:rescaled3}
\end{align}
Here, the coefficients and the potential are given by
\begin{align}
\label{eq:potential}
\widetilde{A^\ve}(z)  & = \widetilde{A}\big( \sqrt{\ve}z_1, \frac{z}{\sqrt{\ve}} \big) =
\psi_1^2\big( \sqrt{\ve}z_1, \frac{z}{\sqrt{\ve}} \big) A\big(\sqrt{\ve}z_1, \frac{z}{\sqrt{\ve}}\big)
\end{align}
\begin{align}
    \label{eq:bigCeff}
C^\ve(z) & = C\big( \sqrt{\ve}z_1, \frac{z}{\sqrt{\ve}} \big) \\
 & = \Big[  - \psi_1(x_1,y) \mop{div}_y(A( x_1, y ) \nabla_x \psi_1(x_1,y))  \notag\\
 & \qquad   - \psi_1(x_1,y) \mop{div}_x(A( x_1, y ) \nabla_y \psi_1(x_1,y))  \notag\\
 & \qquad   - \ve \psi_1(x_1,y) \mop{div}_x(A(x_1,y)\nabla_x \psi_1(x_1,y))\Big] \big( \sqrt{\ve}z_1, \frac{z}{\sqrt{\ve}} \big),\notag
\end{align}
and
\begin{align}\label{eq:nuvedef}
\nu^\ve & =  \ve \lambda^\ve - \frac{\mu_1(0)}{\ve}. 
\end{align}

The functions $v_i^\ve$ and the number $\nu_i^\ve$ are well-defined by \eqref{eq:newvariables} and \eqref{eq:nuvedef}, in terms of $\lambda_i^\ve$, $u_i^\ve$ and $\psi_1$,
and so $\nu_i^\ve, v_i^\ve$ is an eigenpair to the problem \eqref{eq:rescaled}--\eqref{eq:rescaled3}.
The problem for $v^\ve$ \eqref{eq:rescaled}--\eqref{eq:rescaled3} is well-posed in
the weighted Sobolev space $H^1(\widetilde{\Oe}, (\psi_1^\ve)^2, \widetilde{\Sigma_\ve} )$
defined as follows.
Denote
\begin{align*}
L^2(\widetilde{\Oe},(\psi_1^\ve)^2)
& = \Big\{ v : \int_{\widetilde{\Oe}} v^2(z) (\psi_1^\ve)^2 \,dz  < \infty \Big\},
\end{align*}
the weighted Lebesgue space.
Then $H^1(\widetilde{\Oe}, (\psi_1^\ve)^2, \widetilde{\Sigma_\ve} )$ is defined as
\begin{align*}
& H^1(\widetilde{\Oe}, (\psi_1^\ve)^2, \widetilde{\Sigma_\ve} ) \\ 
& \quad = \{ v \in L^2(\widetilde{\Oe},(\psi_1^\ve)^2) : \nabla v \in L^2(\widetilde{\Oe},(\psi_1^\ve)^2)^d,\, v = 0 \text{ on } \widetilde{\Sigma_\ve} \},
\end{align*}
with inner product
\begin{align*}
(u,v)_{H^1(\widetilde{\Oe}, (\psi_1^\ve)^2, \widetilde{\Sigma_\ve})} & = \int_{\widetilde{\Oe}} u\, v\, (\psi_1^\ve)^2 \,dz
+ \int_{\widetilde{\Oe}} \nabla_z u \cdot \nabla_z v \, (\psi_1^\ve)^2 \,dz.
\end{align*}
One notes that $(\psi_1^\ve)^2 > 0$ a.e. on the ends $\widetilde{\Sigma_\ve}$,
so the usual trace mapping can be used in the definition of $H^1(\widetilde{\Oe}, (\psi_1^\ve)^2, \widetilde{\Sigma_\ve} )$.
Moreover, as $(\psi_1^\ve)^2 > 0$ for $z \in \widetilde{\Oe}$ and $(\psi_1^\ve)^2 = 0$
on the lateral boundary $\partial \widetilde{\Oe} \setminus \widetilde{\Sigma_\ve}$.
Furthermore, 
\begin{align}\label{eq:weightintegrabilityve}
(\psi_1^\ve)^2 & \in L^1_{\mathrm{loc}}(\widetilde{\Oe}), \qquad
\frac{1}{(\psi_1^\ve)^2} \in L^1_{\mathrm{loc}}(\widetilde{\Oe}).
\end{align}
The first property in \eqref{eq:weightintegrabilityve} ensures that $C^\infty(\widetilde{\Oe})$
belongs to\linebreak[4] $H^1(\widetilde{\Oe}, (\psi_1^\ve)^2, \widetilde{\Sigma_\ve})$,
and the second property in \eqref{eq:weightintegrabilityve} ensures that\linebreak[4] $H^1(\widetilde{\Oe}, (\psi_1^\ve)^2, \widetilde{\Sigma_\ve})$ is a Hilbert space (c.f. \cite{necas,kufner1987some}).

Note that $C^\ve$ is uniformly bounded thanks to the regularity properties of $\psi_1$, and consequently adding a potential $C v^\ve$ to (\ref{eq:rescaled}) shifts the spectrum by $C$ and makes the potential positive.

\begin{lemma}
The spectrum of problem (\ref{eq:rescaled}) is discrete, bounded from below, and consists of a countably infinite number of points:
\begin{align*}
C \le \nu_1^\ve < \nu_2^\ve \le \nu_3^\ve \le \ldots \le v_i^\ve \to \infty,    
\end{align*}
counted as many times as their finite multiplicity,
with the corresponding eigenfunctions $v_i^\ve$ forming a Hilbert basis in $L^2(\widetilde{\Oe}, (\psi_1^\ve)^2, \widetilde{\Sigma_\ve})$, normalized by
\begin{align}
\label{eq:normalization-v^eps}
\ve^{-(d-1)/2} \int_{\widetilde{\Oe}} |\square(\sqrt \ve z)|^{-1} v_i^\ve v_j^\ve\, (\psi_1^\ve)^2 \, dz = \delta_{ij}.
\end{align}
\end{lemma}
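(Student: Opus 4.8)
The plan is to obtain all the stated properties by transferring the spectral information of the classical Dirichlet problem \eqref{eq:originalproblem} through the invertible change of unknowns \eqref{eq:newvariables}. For fixed $\ve>0$ the operator in \eqref{eq:originalproblem}, with domain $H^1_0(\Oe)$, is self-adjoint and positive on $L^2(\Oe)$ and, $\Oe$ being bounded with uniformly elliptic bounded coefficients, has compact resolvent by the Rellich theorem; hence its spectrum is discrete, accumulates only at $+\infty$, its bottom eigenvalue $\lambda_1^\ve$ is simple by the Krein--Rutman theorem, and the eigenfunctions form an orthogonal basis of $L^2(\Oe)$. The dilation $x=\sqrt\ve z$ is a diffeomorphism onto $\widetilde{\Oe}=\ve^{-1/2}\Oe$ and, up to the Jacobian $\ve^{d/2}$, carries this into an elliptic Dirichlet problem on $\widetilde{\Oe}$ with the same spectral structure and eigenvalues $\ve\lambda_i^\ve$.

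The next step is the ground-state (factorization) transform $u^\ve=\psi_1^\ve v^\ve$. Expanding $\nabla_z(\psi_1^\ve v)$, using the symmetry of $A$ and the chain rule, and inserting the cell equation \eqref{eq:auxproblem} together with the definitions \eqref{eq:b}, \eqref{eq:c}, \eqref{eq:bigCeff}, one gets the pointwise identity
\begin{align*}
-\di{A(\sqrt\ve z_1,\tfrac{z}{\sqrt\ve})\nabla_z(\psi_1^\ve v)}
= \frac{1}{\psi_1^\ve}\Big(-\di{\widetilde{A^\ve}\nabla_z v}+C^\ve v+\frac{\mu_1(\sqrt\ve z_1)}{\ve}(\psi_1^\ve)^2 v\Big),
\end{align*}
which shows that $(\lambda^\ve,u^\ve)$ solves \eqref{eq:originalproblem} if and only if $(\nu^\ve,v^\ve)$, with $\nu^\ve=\ve\lambda^\ve-\mu_1(0)/\ve$, solves \eqref{eq:rescaled}--\eqref{eq:rescaled3}. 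Multiplying this identity by $u=\psi_1^\ve v$, integrating by parts, and subtracting $\tfrac{\mu_1(0)}{\ve}\ii{\widetilde{\Oe}}u^2\,dz$ yields the quadratic-form identity
\begin{align*}
& \ii{\widetilde{\Oe}}\widetilde{A^\ve}\nabla_z v\cdot\nabla_z v\,dz
+\ii{\widetilde{\Oe}}C^\ve v^2\,dz
+\ii{\widetilde{\Oe}}\frac{\mu_1(\sqrt\ve z_1)-\mu_1(0)}{\ve}(\psi_1^\ve)^2 v^2\,dz \\
& \qquad = \ii{\widetilde{\Oe}}A(\sqrt\ve z_1,\tfrac{z}{\sqrt\ve})\nabla_z u\cdot\nabla_z u\,dz-\frac{\mu_1(0)}{\ve}\ii{\widetilde{\Oe}}u^2\,dz
\end{align*}
for $u=\psi_1^\ve v$. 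Since $\psi_1^\ve>0$ in $\widetilde{\Oe}$, the map $v\mapsto\psi_1^\ve v$ is unitary from $L^2(\widetilde{\Oe},(\psi_1^\ve)^2)$ onto $L^2(\widetilde{\Oe})$; granting (see the last paragraph) that it also identifies $H^1(\widetilde{\Oe},(\psi_1^\ve)^2,\widetilde{\Sigma_\ve})$ with $H^1_0(\widetilde{\Oe})$, the displayed identity says exactly that the form on its left-hand side, with that form domain in the Hilbert space $L^2(\widetilde{\Oe},(\psi_1^\ve)^2)$, is unitarily equivalent to the rescaled Dirichlet form shifted by $\mu_1(0)/\ve$. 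Hence it is closed, symmetric, with form domain compactly embedded into $L^2(\widetilde{\Oe},(\psi_1^\ve)^2)$, and bounded below — uniformly in $\ve$, by $\nu_1^\ve=\ve\lambda_1^\ve-\mu_1(0)/\ve\ge C$ from Lemma~\ref{lm:lambdaestimate}, or simply because $C^\ve$ is uniformly bounded and the potential in \eqref{eq:rescaled} becomes pointwise nonnegative after a constant shift. Therefore the spectrum of \eqref{eq:rescaled} is discrete, equals $\{\ve\lambda_i^\ve-\mu_1(0)/\ve\}_{i\ge1}$, accumulates only at $+\infty$, its bottom $\nu_1^\ve$ is simple, and the eigenfunctions $v_i^\ve$ form a Hilbert basis of $L^2(\widetilde{\Oe},(\psi_1^\ve)^2)$. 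The normalization \eqref{eq:normalization-v^eps} is then obtained by changing variables $x=\sqrt\ve z$ in \eqref{eq:normalization-eigenfunctions}, substituting $u_i^\ve=\psi_1^\ve v_i^\ve$, and using the slicing (co-area) formula on $\Oe$ to collect the Jacobian and the cross-section volume factor.

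The step I expect to be the main obstacle is the functional-analytic claim postponed above: that $v\mapsto\psi_1^\ve v$ is an isomorphism of $H^1(\widetilde{\Oe},(\psi_1^\ve)^2,\widetilde{\Sigma_\ve})$ onto $H^1_0(\widetilde{\Oe})$. The delicate feature is that $\psi_1^\ve$ vanishes, and does so linearly — $\psi_1$ being a first Dirichlet eigenfunction on a Lipschitz cell, $\psi_1^\ve\asymp\mop{dist}(\cdot,\partial\widetilde{\Oe}\setminus\widetilde{\Sigma_\ve})$ near the lateral boundary — while it is bounded away from $0$ near the bases $\widetilde{\Sigma_\ve}$, so the two trace behaviours are genuinely different. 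Showing $\psi_1^\ve v\in H^1_0(\widetilde{\Oe})$ for $v$ in the weighted space requires controlling $\ii{\widetilde{\Oe}}v^2|\nabla\psi_1^\ve|^2\,dz$ by $\ii{\widetilde{\Oe}}(\psi_1^\ve)^2|\nabla v|^2\,dz$, that is, a weighted Hardy inequality with a distance-type weight; the converse inclusion $u/\psi_1^\ve\in H^1(\widetilde{\Oe},(\psi_1^\ve)^2,\widetilde{\Sigma_\ve})$ for $u\in H^1_0(\widetilde{\Oe})$ rests on Hardy's inequality again. The integrability properties \eqref{eq:weightintegrabilityve} are precisely the admissibility conditions guaranteeing that the weighted Sobolev space is a Hilbert space in which smooth functions are dense and a Rellich-type compact embedding holds, so I would invoke the weighted Sobolev theory of \cite{necas,kufner1987some} for these facts rather than reprove them.
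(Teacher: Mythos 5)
Your proposal takes a genuinely different route from the paper's. The paper states the lemma without a proof in the running text; there is a commented-out sketch in the source that tries to construct a compact Green operator $G^\ve:L^2(\widetilde{\Oe},(\psi_1^\ve)^2)\to H^1(\widetilde{\Oe},(\psi_1^\ve)^2,\widetilde{\Sigma_\ve})$ directly by the Riesz representation theorem, and that sketch breaks off with an explicit in-source note questioning whether the weighted $H^1$ embeds compactly into the weighted $L^2$. You instead transfer the entire spectral picture from the classical Dirichlet problem on the bounded Lipschitz domain $\Oe$ through the ground-state transform $u=\psi_1^\ve v$, so discreteness, the accumulation only at $+\infty$, simplicity at the bottom, and the Hilbert-basis property all come for free from Rellich's theorem on $\Oe$ rather than being re-derived in the degenerate weighted setting. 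This is cleaner and actually resolves the very point the paper's sketch was stuck on: once the factorization is shown to be a topological isomorphism onto $H^1_0(\widetilde{\Oe})$, compactness is inherited. Both routes, however, hinge on the same technical core, which you correctly isolate and postpone: that $v\mapsto\psi_1^\ve v$ maps $H^1(\widetilde{\Oe},(\psi_1^\ve)^2,\widetilde{\Sigma_\ve})$ isomorphically onto $H^1_0(\widetilde{\Oe})$, which amounts to a Hardy inequality with the weight $\psi_1^\ve\asymp\mop{dist}(\cdot,\partial\widetilde{\Oe}\setminus\widetilde{\Sigma_\ve})$ (linear vanishing follows from the boundary-point property, Lemma~\ref{lm:boundarypointlemma}, and the Lipschitz regularity of the cell). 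Be aware that the integrability conditions \eqref{eq:weightintegrabilityve}, which the paper cites from \cite{necas,kufner1987some}, only guarantee that the weighted space is a Hilbert space in which smooth functions are dense; by themselves they give neither the Hardy inequality nor the compact embedding, so that step cannot simply be delegated to \eqref{eq:weightintegrabilityve}. For fixed $\ve$ the needed inequality is a standard distance-weight Hardy inequality on a bounded Lipschitz domain, and spelling it out would turn your proposal into a complete proof.
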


We might therefore obtain information about the asymptotics of
the eigenvalues and eigenfunctions to \eqref{eq:originalproblem}
by studying the spectral asymptotics of the problem \eqref{eq:rescaled}--\eqref{eq:rescaled3},
or view it as a change of variables in studying \eqref{eq:originalproblem}.
In any point of view the following a priori estimate
for the eigenfuction $v_1^\ve$ to the rescaled problem 
is a starting point of analysis.


\subsection{A priori estimates for the rescaled problem}
Since we are dealing with a dimension reduction problem, it is convenient to work in spaces with measure charging the thin cylinder $\widetilde{\Oe}$. Namely, let us denote for brevity
\begin{align}
\label{eq:measure}
d\mu_\ve(z) & = \ve^{-(d-1)/2}|\square(0)|^{-1}\chi_{\widetilde{\Oe}}(z) dz.
\end{align}
Roughly speaking, we divide by the measure of the cross-section, and the rescaled measure will converge to the one-dimensional Lebegues measure charging the real line.  

\begin{lemma}
\label{lm:conv-measures}
The measure $\mu_\ve$ defined by (\ref{eq:measure}) converges weak$^\ast$ in the space of Radon measures $\mathcal M(\rr^d)$, as $\ve \to 0$,  to the measure $\mu$ defined by
\begin{align*}
d\mu_\ve \rightharpoonup dz_1 \times \delta(z'), \quad z = (z_1, z') \in \mathbf{R}^d, \quad \ve \to 0.
\end{align*}
\end{lemma}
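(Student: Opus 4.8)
\quad Since the target measure $dz_1\times\delta(z')$ is $\sigma$-finite but not finite, the weak$^\ast$ convergence in $\mathcal M(\rr^d)$ asserted here is tested against $\varphi\in C_c(\rr^d)$, so the plan is to show that $\int_{\rr^d}\varphi\,d\mu_\ve\to\int_{\rr}\varphi(z_1,0)\,dz_1$ for each such $\varphi$. Fix $\varphi$ with $\mathrm{supp}\,\varphi\subset[-M,M]\times B_0(r)$. First I would record that, for $\ve$ so small that $R\sqrt\ve<r$ and $[-M,M]\subset\tfrac1{\sqrt\ve}I$, the slice of $\widetilde{\Oe}=\tfrac1{\sqrt\ve}\Oe$ at level $z_1$ is the full dilated cross-section $\sqrt\ve\,Q(\sqrt\ve z_1,z_1/\sqrt\ve)$, which by (F1) is contained in $B_0(R\sqrt\ve)$. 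Hence, slicing $z=(z_1,z')$ and substituting $z'=\sqrt\ve\,y'$ on each slice (so $dz'=\ve^{(d-1)/2}\,dy'$), the prefactor $\ve^{-(d-1)/2}$ cancels and
\begin{align*}
\int_{\rr^d}\varphi\,d\mu_\ve=\frac{1}{|\square(0)|}\int_{-M}^{M}\int_{Q(\sqrt\ve z_1,\,z_1/\sqrt\ve)}\varphi\big(z_1,\sqrt\ve\,y'\big)\,dy'\,dz_1 .
\end{align*}
Since $|Q(\cdot,\cdot)|\le|B_0(R)|$ by (F1), this already yields the uniform bound $\mu_\ve\big([-M,M]\times B_0(r)\big)\le 2M|B_0(R)|/|\square(0)|$ for all small $\ve$.

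Next I would flatten $\varphi$ in the transverse variable. Replacing $\varphi(z_1,\sqrt\ve y')$ by $\varphi(z_1,0)$ costs at most $\omega_\varphi(R\sqrt\ve)$ pointwise, where $\omega_\varphi$ is the modulus of continuity of $\varphi$, and hence, since the mass of $\mu_\ve$ on $\mathrm{supp}\,\varphi$ is uniformly bounded, contributes $O(\omega_\varphi(R\sqrt\ve))\to0$. Writing $m(x_1,y_1):=|Q(x_1,y_1)|$, this reduces the statement to the one-dimensional oscillatory integral
\begin{align*}
\int_{\rr^d}\varphi\,d\mu_\ve=\frac{1}{|\square(0)|}\int_{-M}^{M}\varphi(z_1,0)\,m\big(\sqrt\ve z_1,\tfrac{z_1}{\sqrt\ve}\big)\,dz_1+o(1),\qquad\ve\to0 .
\end{align*}

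For the remaining averaging I would use that, by the $C^2$-regularity of $F$ together with (F1)--(F2), the function $m$ is bounded, $1$-periodic in $y_1$, and continuous in $x_1$ uniformly in $y_1$, and that $\int_{\mathbf{T}^1}m(x_1,y_1)\,dy_1=|\square(x_1)|$. Because the slow argument $\sqrt\ve z_1\to0$ uniformly on $[-M,M]$, one may replace $m(\sqrt\ve z_1,z_1/\sqrt\ve)$ by $m(0,z_1/\sqrt\ve)$ up to another $o(1)$. Finally $m(0,\cdot)$ is a bounded $1$-periodic function, so by the averaging property of rapidly oscillating periodic functions $m(0,\cdot/\sqrt\ve)\rightharpoonup\int_{\mathbf{T}^1}m(0,y_1)\,dy_1=|\square(0)|$ weakly in $L^2(-M,M)$, and therefore
\begin{align*}
\frac{1}{|\square(0)|}\int_{-M}^{M}\varphi(z_1,0)\,m\big(0,\tfrac{z_1}{\sqrt\ve}\big)\,dz_1\longrightarrow\int_{-M}^{M}\varphi(z_1,0)\,dz_1 .
\end{align*}
Since $\mathrm{supp}\,\varphi\subset[-M,M]\times B_0(r)$, the right-hand side equals $\langle dz_1\times\delta(z'),\varphi\rangle$, which is the asserted limit.

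The only step carrying genuine content is the periodic averaging of $m(0,\cdot/\sqrt\ve)$; everything else is a careful matching of the two competing scales, the longitudinal domain $\tfrac1{\sqrt\ve}I$ dilating to all of $\rr$ and the transverse cross-sections of size $R\sqrt\ve$ contracting to a point. The point I expect is most worth underlining is that the normalizing constant involves the value $|\square(0)|$ at the localization point, not a longitudinal average of $|\square(\cdot)|$: this is forced by the sub-$\ve$ scaling $x_1=\sqrt\ve z_1$, which freezes the slow variable at $x_1=0$ on every compact set of $z_1$'s. A routine matter I would relegate to a remark is the continuity of $m(x_1,y_1)=|Q(x_1,y_1)|$, which follows from the $C^2$-regularity of $F$ and (F1)--(F2) since the level set $\{F(x_1,y_1,\cdot)=0\}$ is Lebesgue-null for every $(x_1,y_1)$, so that dominated convergence yields joint continuity of $m$.
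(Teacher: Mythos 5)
Your proof is correct and follows essentially the same route as the paper's: slice $\widetilde{\Oe}$ along $z_1$, rescale $z'=\sqrt\ve\,y'$ to absorb the factor $\ve^{-(d-1)/2}$, use that the cross-sections are uniformly bounded, and average out the residual $\sqrt\ve$-periodic oscillation in $z_1$. The only difference is in how that last averaging is organized. The paper partitions $\ve^{-1/2}I$ into period cells $I_j^\ve$ of length $\sqrt\ve$, uses the mean value theorem on each cell to freeze simultaneously the slow argument of $Q$ and the $z_1$-argument of $\varphi$ at a single point $\xi_j$, and then recognizes a Riemann sum in $\xi_j$ whose ($\ve$-dependent) integrand tends to $\varphi(\xi,0)$. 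You instead decouple the three asymptotic effects in sequence — flatten $\varphi$ transversally via uniform continuity, freeze the slow variable in $m(x_1,y_1)=|Q(x_1,y_1)|$, and then invoke the standard weak-$*$ averaging lemma for the rescaled periodic function $m(0,\cdot/\sqrt\ve)$. Both are valid; yours makes a bit more visible that the normalization must be by $|\square(0)|$, the cell volume at the localization point, rather than by any longitudinal average of $|\square(\cdot)|$.
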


\begin{proof}
The proof is similar to one of Lemma 2.1 in \cite{pettersson2017two} (see also the convergence result in Section 3.2 \cite{pankratova2015spectral}).

Let $\varphi \in C_0(\rr^d)$.
Then
\begin{align*}
 \int_{\rr^d} \varphi(z) \, d\mu_\ve(z)
 =
\int_{\ve^{-1/2}I} \frac{\ve^{-\frac{(d-1)}{2}}}{|\square(0)|}\int_{\sqrt\ve Q(\sqrt \ve z_1,\frac{z_1}{\sqrt\ve})} \varphi(z) dz' dz_1.
\end{align*}
Rescaling $y'=z'/\sqrt\ve$ gives
\begin{align*}
 & \frac{1}{|\Box(0)|}\int_{\rr^d} \varphi(z) \, d\mu_\ve(z) \\
 & \quad =
\frac{1}{|\Box(0)|}\int_{\ve^{-1/2}I} \frac{1}{|\square(0)|}\int_{Q(\sqrt \ve z_1,\frac{z_1}{\sqrt\ve})} \varphi(z_1,\sqrt \ve y') dy' dz_1.
\end{align*}
Let us divide the interval $\ve^{-1/2}I$ into  subintervals (translated scaled periods) $I_j^\ve = \sqrt \ve [0,1)+\sqrt \ve j$, $j\in \mathbf Z$. 

On each interval we use the mean-value theorem choosing a point $\sqrt \ve \xi_j$ and get
\begin{align*}
&\frac{1}{|\Box(0)|}\sum_{j} \int_{I_j^\ve} \int_{Q(\sqrt \ve z_1,\frac{z_1}{\sqrt\ve})} \varphi(z_1,\sqrt \ve y') dy' dz_1\\
& \quad =\frac{1}{|\Box(0)|}\sum_{j} \int_{I_j^\ve} \int_{Q(\sqrt\ve  \xi_j z_1,\frac{z_1}{\sqrt\ve})} \varphi(\xi_j,\sqrt \ve y') dy' dz_1. 
\end{align*}
Since $Q(x_1, y_1)$ is periodic with respect to $y_1$, rescaling $z_1=y_1\sqrt \ve$ yields
\begin{align*}
&\frac{1}{|\Box(0)|}\sum_{j} \sqrt \ve \int_{\mathbf T^1} \int_{Q(\sqrt \ve \xi_j, y_1)} \varphi(\xi_j,\ve y') dy' dy_1\\
& \quad =
 \frac{1}{|\Box(0)|}\sum_{j}  \sqrt \ve\int_{\Box(\sqrt \ve \xi_j)} \varphi(\xi_j,\ve y') dy.
\end{align*}
The last sum is a Riemann sum converging to the following integral
\begin{align*}
& \frac{1}{|\Box(0)|}\sum_{j}  \sqrt \ve\int_{\Box(\sqrt \ve \xi_j)} \varphi(\xi_j,\ve y') dy \\
& \quad \to 
\frac{1}{|\Box(0)|}\ii{\rr}  \ii{\Box(0)} \varphi(z_1,0) \,dy dz_1 \\
& \quad = \ii{I}\varphi(z_1,0)\, dz_1 = \ii{\rr^d} \varphi(z) \, d\mu,
\end{align*}
as $\ve$ tends to zero.
\end{proof}

\begin{lemma}\label{lm:v1veestimate}
Suppose that $\mu_1(x_1)$ has a unique global minimum point at $x_1=0$, i.e. \eqref{eq:H} holds.
Let $v^\ve_1$ be the first eigenfunction to (\ref{eq:rescaled}), normalized by (\ref{eq:normalization-v^eps}).
Then we have the following estimates in spaces with measure:
\begin{align*}
\| \psi_1^\ve \nabla v^\ve_1  \|_{L^2(\rr^d, d\mu_\ve)}
+ \| \psi_1^\ve v^\ve_1  \|_{L^2(\rr^d, d\mu_\ve)}
+ \| \psi_1^\ve z_1 v^\ve_1  \|_{L^2(\rr^d, d\mu_\ve)}
& \le C.
\end{align*}
\end{lemma}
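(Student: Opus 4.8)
\emph{Approach.} First observe that the middle norm is for free: since
$\|f\|_{L^2(\rr^d,d\mu_\ve)}^2=\ve^{-(d-1)/2}|\square(0)|^{-1}\int_{\widetilde{\Oe}}f^2\,dz$ and $x_1\mapsto|\square(x_1)|$ is continuous and strictly positive on $\overline I$, the normalization (\ref{eq:normalization-v^eps}) forces $\int_{\widetilde{\Oe}}(\psi_1^\ve)^2(v_1^\ve)^2\,dz\le C\ve^{(d-1)/2}$, hence $\|\psi_1^\ve v_1^\ve\|_{L^2(\rr^d,d\mu_\ve)}\le C$. The remaining two norms I would extract by testing the rescaled equation (\ref{eq:rescaled})--(\ref{eq:rescaled3}) against its own first eigenfunction $v_1^\ve$: since the problem is well posed in $H^1(\widetilde{\Oe},(\psi_1^\ve)^2,\widetilde{\Sigma_\ve})$ and $v_1^\ve$ lies in that space, one gets the energy identity
\begin{align*}
\int_{\widetilde{\Oe}} (\psi_1^\ve)^2\, A\Big(\sqrt\ve z_1,\frac{z}{\sqrt\ve}\Big)\nabla_z v_1^\ve\cdot\nabla_z v_1^\ve\,dz
&+ \int_{\widetilde{\Oe}} \frac{\mu_1(\sqrt\ve z_1)-\mu_1(0)}{\ve}\,(\psi_1^\ve)^2 (v_1^\ve)^2\,dz \\
&+ \int_{\widetilde{\Oe}} C^\ve\,(v_1^\ve)^2\,dz
= \nu_1^\ve \int_{\widetilde{\Oe}} (\psi_1^\ve)^2 (v_1^\ve)^2\,dz ,
\end{align*}
with $C^\ve$ as in (\ref{eq:bigCeff}) and $\widetilde{A^\ve}=(\psi_1^\ve)^2 A(\sqrt\ve z_1,z/\sqrt\ve)$ by (\ref{eq:potential}). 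By Lemma~\ref{lm:lambdaestimate} and (\ref{eq:nuvedef}) one has $\nu_1^\ve=\ve\lambda_1^\ve-\mu_1(0)/\ve=O(1)$, so the right-hand side is $O(\ve^{(d-1)/2})$.

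\emph{The two favourable terms.} On the left, ellipticity of $A$ bounds the first term from below by $\alpha\|\psi_1^\ve\nabla v_1^\ve\|_{L^2(\widetilde{\Oe})}^2$. For the second term I would use (\ref{eq:H}) in a global form: since $0$ is the \emph{unique} global minimum of $\mu_1\in C^2(\overline I)$ and $\mu_1''(0)>0$, the quotient $(\mu_1(x_1)-\mu_1(0))/x_1^2$ extends continuously and strictly positively to $\overline I$, so $\mu_1(x_1)-\mu_1(0)\ge c_0 x_1^2$ on $\overline I$ for some $c_0>0$; as $\sqrt\ve z_1\in I$ on $\widetilde{\Oe}$, this gives $\frac{\mu_1(\sqrt\ve z_1)-\mu_1(0)}{\ve}\ge c_0 z_1^2$ there, whence the second term is $\ge c_0\|\psi_1^\ve z_1 v_1^\ve\|_{L^2(\widetilde{\Oe})}^2\ge0$. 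Therefore
\[
\alpha\|\psi_1^\ve\nabla v_1^\ve\|_{L^2(\widetilde{\Oe})}^2 + c_0\|\psi_1^\ve z_1 v_1^\ve\|_{L^2(\widetilde{\Oe})}^2 \le C\ve^{(d-1)/2} + \Big|\int_{\widetilde{\Oe}} C^\ve (v_1^\ve)^2\,dz\Big|,
\]
and everything is reduced to controlling the last, lower-order term.

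\emph{The obstacle.} From (\ref{eq:bigCeff}) the potential $C^\ve$ carries exactly one power of the degenerate weight, $|C^\ve|\le M\psi_1^\ve$ with $M$ independent of $\ve$, so one must bound $\int_{\widetilde{\Oe}}\psi_1^\ve(v_1^\ve)^2\,dz$ by the favourable quantities. For this I would invoke a weighted Hardy inequality on the periodicity cell: because $\psi_1(x_1,\cdot)>0$ in $\square(x_1)$ and vanishes non-degenerately on $\partial\square(x_1)$ --- by the Hopf lemma, and uniformly in $x_1$ thanks to (H1), (H2) and the $C^2$-dependence of $\psi_1$ on $x_1$ --- there is a constant $C$, independent of $x_1\in\overline I$, with
\[
\int_{\square(x_1)} \psi_1(x_1,y)\, g(y)^2\,dy \le C \int_{\square(x_1)} \psi_1^2(x_1,y)\big(|\nabla_y g|^2 + g^2\big)\,dy ,
\]
for $g$ in the corresponding weighted space (cf.~\cite{necas,kufner1987some}; this is also what makes the form in (\ref{eq:rescaled}) finite on $H^1(\widetilde{\Oe},(\psi_1^\ve)^2,\widetilde{\Sigma_\ve})$, so the test above is legitimate). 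Partitioning $\widetilde{\Oe}$ along $z_1$ into $\sqrt\ve$-periodicity blocks as in the proof of Lemma~\ref{lm:conv-measures}, rescaling each block by $\ve^{-1/2}$ so that it becomes $\square(x_1)$ up to errors that vanish uniformly as $\ve\to0$, applying the cell inequality, scaling back and summing should yield
\[
\int_{\widetilde{\Oe}} \psi_1^\ve\, w^2\,dz \le C\ve \int_{\widetilde{\Oe}} (\psi_1^\ve)^2 |\nabla_z w|^2\,dz + C \int_{\widetilde{\Oe}} (\psi_1^\ve)^2 w^2\,dz
\]
for all $w\in H^1(\widetilde{\Oe},(\psi_1^\ve)^2,\widetilde{\Sigma_\ve})$, with $C$ independent of $\ve$; the gain of the factor $\ve$ in front of the Dirichlet energy comes from the thinness of the cross section.

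\emph{Conclusion.} Applying this with $w=v_1^\ve$ absorbs $|\int_{\widetilde{\Oe}} C^\ve (v_1^\ve)^2|\le M\int_{\widetilde{\Oe}}\psi_1^\ve(v_1^\ve)^2$ into $\frac{\alpha}{2}\|\psi_1^\ve\nabla v_1^\ve\|_{L^2(\widetilde{\Oe})}^2$ for all small $\ve$, while the residual $MC\int_{\widetilde{\Oe}}(\psi_1^\ve)^2(v_1^\ve)^2\le C'\ve^{(d-1)/2}$ is already controlled. Hence $\|\psi_1^\ve\nabla v_1^\ve\|_{L^2(\widetilde{\Oe})}^2+\|\psi_1^\ve z_1 v_1^\ve\|_{L^2(\widetilde{\Oe})}^2\le C\ve^{(d-1)/2}$; dividing by $\ve^{(d-1)/2}|\square(0)|$, i.e. passing to the $L^2(\rr^d,d\mu_\ve)$-norm, and combining with the bound already obtained for $\|\psi_1^\ve v_1^\ve\|_{L^2(\rr^d,d\mu_\ve)}$ gives the three-term estimate. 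I expect the weighted Hardy inequality, and in particular the uniformity in the slow variable $x_1$ of the non-degeneracy of $\psi_1$ at $\partial\square(x_1)$, to be the main technical point of the argument.
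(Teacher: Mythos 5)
Your proposal follows the same overall strategy as the paper's proof: test the rescaled equation with $v_1^\ve$ itself, use the ellipticity of $A$ and the bound $\nu_1^\ve=O(1)$ from Lemma~\ref{lm:lambdaestimate}, and use the convexity of $\mu_1$ near $0$ (hypothesis~\eqref{eq:H}) to harvest the $z_1^2$-weighted norm. The middle norm being fixed by the normalization~\eqref{eq:normalization-v^eps} is also how the paper treats it. So the skeleton is identical.

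Where your proposal genuinely diverges from the paper is in the handling of the lower-order term $\int C^\ve (v_1^\ve)^2\,dz$. You are right to flag this as the obstacle: $C^\ve$ from~\eqref{eq:bigCeff} carries only \emph{one} power of the degenerate weight $\psi_1^\ve$, so this term is not a priori dominated by $\int(\psi_1^\ve)^2(v_1^\ve)^2=O(\ve^{(d-1)/2})$. The paper dispatches this step with the phrase ``where one has used the boundedness of $\psi_1$, $b$, and $c$,'' which does not actually explain why $\int\psi_1^\ve(v_1^\ve)^2$ is controlled under the given normalization, and leaves a real gap. Your route--- a weighted Hardy inequality on the cell $\square(x_1)$, uniform in $x_1$ via Hopf's lemma, followed by the $\sqrt\ve$-rescaling that produces the factor $\ve$ in front of $\|\psi_1^\ve\nabla v_1^\ve\|^2$ so that the cross-term can be absorbed--- is a legitimate and cleaner way to close that gap. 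The cell-level inequality is plausible: in the one-dimensional model $\psi_1\sim d$, integrating $\int d\,g^2$ by parts to $\frac12 g^2 d^2\big|+\int d^2 g g'$ and Cauchy--Schwarz gives exactly $\int d\,g^2\lesssim\int d^2(|g'|^2+g^2)$ up to a harmless interior term, and the general case localizes near $\partial\square(x_1)$. Two things you should check if this were to be written up rigorously: (i) density of smooth compactly supported functions in $H^1(\square(x_1),\psi_1^2)$, so the integration by parts has no boundary contribution; (ii) that the ``approximation of blocks by $\square(x_1)$ up to vanishing error'' step commutes with the Hardy constant uniformly in $\ve$ --- both should hold under (H1)--(H2) and the $C^2$-dependence of $\psi_1$ on $x_1$, but they are not automatic. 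In short: same strategy as the paper, but your version makes precise an absorption step the paper only asserts.
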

\begin{proof}


The weak form of the equation for $\nu^\ve, v^\ve$ is
\begin{align}\label{eq:weak-rescaled}
& \int_{\widetilde{\Oe}} \widetilde{A^\ve}\nabla v^\ve \cdot \nabla \varphi \,dz 
 + \int_{\widetilde{\Oe}} \Big( C^\ve &+ \frac{\mu_1(\sqrt{\ve}z_1) - \mu_1(0)}{\ve}(\psi_1^\ve)^2 \Big)\, v^\ve\, \varphi \,dz \notag \\
& \quad = \nu^\ve \int_{\widetilde{\Oe}}  (\psi_1^\ve)^2\, v^\ve \, \varphi \,dz,
\end{align}
for all $\varphi \in  H^1(\widetilde{\Oe}, (\psi_1^\ve)^2, \widetilde{\Sigma_\ve} )$.

We turn to the a priori estimates for the first eigenfunction $v_1^\ve$,
under the following normalization:
\begin{align*}
\int_{\mathbf{R}^d}
(\psi_1^\ve)^2
(v_1^\ve)^2 \,d\mu_\ve = 1.
\end{align*}
By Lemma~\ref{lm:lambdaestimate},
\begin{align*}
\nu_1^\ve & = \ve \lambda_1^\ve - \frac{\mu_1(0)}{\ve} 
= O(1), \quad \ve \to 0.
\end{align*}
Taking the first eigenfunction as a test function in (\ref{eq:weak-rescaled}), we get
\begin{align*}
& \int_{\mathbf{R}^d} \psi_1^2\big( \sqrt{\ve}z_1, \frac{z}{\sqrt{\ve}} \big)|\nabla_z v^\ve_1|^2 \, d\mu_{\ve}(z) \\
& \quad \le C
\int_{\mathbf{R}^d} (\psi_1^2A)\big( \sqrt{\ve}z_1, \frac{z}{\sqrt{\ve}} \big) \nabla_z v^\ve_1 \cdot \nabla_z v^\ve_1 \, d\mu_{\ve}(z) \\
& \quad  = C \Big(
\nu_1^\ve \int_{\mathbf{R}^d} \psi_1^2\big( \sqrt{\ve}z_1, \frac{z}{\sqrt{\ve}} \big) (v_1^\ve)^2 \,d\mu_\ve(z) \\
& \qquad  - 
\int_{\mathbf{R}^d} (\psi_1 b + \ve \psi_1 c)\big( \sqrt{\ve}z_1, \frac{z}{\sqrt{\ve}} \big) (v_1^\ve)^2 \,d\mu_\ve(z) \\
& \qquad  - \int_{\mathbf{R}^d} \Big(\frac{\mu_1(\sqrt{\ve}z_1) - \mu_1(0)}{\ve}\psi_1^2 \Big)\big(\sqrt{\ve}z_1, \frac{z}{\sqrt{\ve}}\big) (v_1^\ve)^2 \,d\mu_\ve(z)
\Big) \\
& \quad \le C,
\end{align*}
for some constant $C$ which is independent of $\ve$, where one has 
used the boundedness of $\psi_1$, $b$, and $c$.

By hypothesis \eqref{eq:H}, there exists an absolute constant $C$ that is independent of $\ve$ and such that
\begin{align*}
\frac{\mu_1(\sqrt{\ve}z_1) - \mu_1(0)}{\ve} & \ge C z_1^2,
\quad z_1 \in \frac{1}{\sqrt{\ve}}\overline{I}.
\end{align*}
Indeed, suppose that there exists a sequence $\zeta_j$ such that
\[
\frac{\mu_1(\sqrt{\ve}\zeta_j) - \mu_1(0)}{|\sqrt \ve \zeta_j|^2} \to 0, \quad j\to \infty.
\]
Since $z_1 \in (\sqrt{\ve})^{-1}\overline{I}$, then for each fixed $\ve$, $|\sqrt \ve \zeta_j|^2$ is bounded, which yields
\[
\mu_1(\sqrt{\ve}\zeta_j) \to \mu_1(0), \quad j\to \infty.
\]
Then $\zeta_j \to 0$, by uniqueness of the minimum point $0$.
On the other hand, since $\mu_1''(0)$ is strictly positive by assumption, 
\[
\frac{\mu_1(\sqrt{\ve}\zeta_j) - \mu_1(0)}{|\sqrt{\ve}\zeta_j|^2} = \frac{1}{2} \mu_1''(0) + o(1) > \alpha\mu_1''(0)>0 , \quad \zeta_j \to 0
\]
for some $\alpha >0$, and we arrive to a contradiction. 

Therefore, again by the integral identity for the eigenpair $(\nu_1^\ve, v_1^\ve)$,
\begin{align*}
& \int_{\mathbf{R}^d} \psi_1^2\big(\sqrt{\ve}z_1, \frac{z}{\sqrt{\ve}}\big) z_1^2 (v_1^\ve)^2 \,d\mu_\ve(z) \\
& \quad \le
C_1
\int_{\mathbf{R}^d} \psi_1^2\big(\sqrt{\ve}z_1, \frac{z}{\sqrt{\ve}}\big)
\frac{\mu_1(\sqrt{\ve}z_1) - \mu_1(0)}{\ve}
(v_1^\ve)^2 \,d\mu_\ve(z) \\
& \quad = 
C_2 \Big(
\nu_1^\ve \int_{\mathbf{R}^d} \psi_1^2\big(\sqrt{\ve}z_1, \frac{z}{\sqrt{\ve}}\big) (v_1^\ve)^2 \,d\mu_\ve(z) \\
& \qquad\qquad - \int_{\mathbf{R}^d} 
(\psi_1^2A)\big(\sqrt{\ve}z_1, \frac{z}{\sqrt{\ve}}\big)
\nabla_z v_1^\ve \cdot \nabla_z v_1^\ve 
\,d\mu_\ve(z) \\
&\qquad\qquad 
- \int_{\mathbf{R}^d} 
(\psi_1 b + \ve \psi_1 c)\big(\sqrt{\ve}z_1, \frac{z}{\sqrt{\ve}}\big)
(v_1^\ve)^2 
\,d\mu_\ve(z)
\Big) \\
& \quad \le C,
\end{align*}
for some constant $C$ which is independent of $\ve$, by the 
the boundedness of $A$, $\psi_1$, $b$, and $c$, and
the above estimate for $\psi_1\big( \sqrt{\ve}z_1, \frac{z}{\sqrt{\ve}} \big) \nabla_z v_1^\ve$.
Lemma \ref{lm:v1veestimate} is proved. 
\end{proof}

\subsection{Two-scale convergence}

Let us formulate the definition of two-scale convergence for our particular setting.
For the specific weakly convergent Radon measures in \eqref{eq:measure}, as $\ve \to 0$,
\begin{align*}
d\mu_\ve(z)
=
\frac{\ve^{-(d-1)/2}}{|\square(\sqrt \ve z_1)|}\chi_{\widetilde{\Oe}}(z) dz
\rightharpoonup d\mu(z) = dz_1 \times d\delta(z'),
\end{align*}
a bounded sequence 
$v_\ve$ in $L^2(\rr^d, d\mu_\ve)$, that is
\begin{align}
\limsup_{\ve \to \infty} \int_{\rr^d} v_\ve^2(z) \,d\mu_\ve(z) < \infty, \label{eq:bdd}
\end{align}
is said to be weakly two-scale convergent at rate $\sqrt{\ve}$ to a function $v = v(z,y) \in L^2(\rr^d \times \square(0), d\mu \times dy)$, $v_\ve \overset{2}{\rightharpoonup} v$, if 
\begin{align*}
& \lim \limits_{\ve \to 0}
\int_{\rr^d} v_\ve(z) \varphi(z) \phi\Big(\frac{z}{\sqrt{\ve}}\Big)  \,d\mu_\ve(z) \\
& \quad = \frac{1}{|\square(0)|} \int_{\rr^d}  \int_{\square(0)} v(z,y) \varphi(z) \phi(y) \, dy \,d\mu(z),
\end{align*}
for any $\phi \in C^\infty(\ttt^1 \times \mathbf{R}^{d-1})$ and any $\varphi \in C^\infty_0(\rr^d)$.

A bounded sequence 
$v_\ve$ in $L^2(\rr^d, d\mu_\ve)$ is said to be strongly two-scale
converging to a function $v \in L^2(\rr^d \times \square(0), d\mu \times dy)$ if 
\begin{align*}
\lim \limits_{\ve \to 0}\int_{\rr^d} v_\ve(z) w_\ve(z)  \,d\mu_\ve(z) = \frac{1}{|\square(0)|} \int_{\rr^d} \int_{\square(0)} v(z,y) w(z,y) \, dy \,d\mu(z),
\end{align*}
for any weakly two-scale converging sequence $w_\ve \overset{2}{\rightharpoonup} w$ in $L^2(\rr^d, d\mu_\ve)$.

The following two-scale compactness principle holds.
\begin{lemma}\label{tm:twoscaleprinciple}
Let $v_\ve$ be bounded in $L^2(\rr^d, d\mu_\ve)$ \eqref{eq:bdd}.
Then, along a subsequence, $v_\ve$ converges weakly two-scale in $L^2(\rr^d, d\mu_\ve)$
to some $v = v(z,y) \in L^2(\rr^d \times \square(0), d\mu \times dy)$.
\end{lemma}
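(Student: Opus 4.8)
The plan is to follow the classical functional-analytic proof of two-scale compactness (as in \cite{zhikov2000extension,Zh-2004,pettersson2017two}), adapted to the singular measures $d\mu_\ve$ and the locally periodic, $x_1$-dependent cross-section. For test functions $\varphi \in C_0^\infty(\rr^d)$ and $\phi \in C^\infty(\ttt^1\times\rr^{d-1})$, introduce the family of linear functionals
\[
L_\ve(\varphi,\phi) \;=\; \int_{\rr^d} v_\ve(z)\,\varphi(z)\,\phi\Big(\frac{z}{\sqrt\ve}\Big)\, d\mu_\ve(z).
\]
By the Cauchy--Schwarz inequality in $L^2(\rr^d,d\mu_\ve)$ and the uniform bound \eqref{eq:bdd} (with $C = \sup_\ve \|v_\ve\|_{L^2(\rr^d,d\mu_\ve)} < \infty$),
\[
|L_\ve(\varphi,\phi)| \;\le\; C\,\Big(\int_{\rr^d} \varphi^2(z)\,\phi^2\big(\tfrac{z}{\sqrt\ve}\big)\, d\mu_\ve(z)\Big)^{1/2},
\]
so that the whole argument rests on controlling this last integral as $\ve\to 0$.

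First I would show that, for each such pair $(\varphi,\phi)$,
\[
\int_{\rr^d} \varphi^2(z)\,\phi^2\Big(\frac{z}{\sqrt\ve}\Big)\, d\mu_\ve(z) \;\longrightarrow\; \frac{1}{|\square(0)|}\int_{\rr^d}\int_{\square(0)} \varphi^2(z)\,\phi^2(y)\, dy\, d\mu(z), \qquad \ve\to 0.
\]
This is essentially the computation in the proof of Lemma~\ref{lm:conv-measures} carried out with the additional oscillating factor $\phi(\cdot/\sqrt\ve)$: after rescaling $y'=z'/\sqrt\ve$ the powers of $\ve$ cancel, leaving an integral over $\ve^{-1/2}I$ of an integrand that is slowly varying in $z_1$ (through $\varphi$ and through the $x_1$-dependence of the cell) and $\sqrt\ve$-periodic in $z_1$ (through $\phi(z_1/\sqrt\ve,\cdot)$ and through the $y_1$-dependence of $Q$). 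Splitting $\ve^{-1/2}I$ into the scaled periods $I_j^\ve = \sqrt\ve[0,1)+\sqrt\ve j$, freezing the slow dependence at a point $\sqrt\ve\xi_j\in I_j^\ve$ via the mean-value theorem, and using the $1$-periodicity of $\phi$ and of $Q$ in the fast argument, each period contributes $\sqrt\ve$ times an average of $\varphi^2\phi^2$ over the frozen cell, and the resulting Riemann sum converges to the right-hand side above. The uniform boundedness of $Q(x_1,y_1)$ from (F1) makes the non-periodic variable $y'$ harmless, and the $C^2$-regularity of $F$ from (H1) keeps the freezing error $o(1)$ uniformly.

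Next comes the soft part. Combining the two displays,
\[
\limsup_{\ve\to 0}|L_\ve(\varphi,\phi)| \;\le\; C\,|\square(0)|^{-1/2}\,\|\varphi\otimes\phi\|_{L^2(\rr^d\times\square(0),\,d\mu\times dy)}.
\]
I would then pick a countable family $\{(\varphi_k,\phi_l)\}$ whose products $\varphi_k\otimes\phi_l$ are dense in $L^2(\rr^d\times\square(0),d\mu\times dy)$ --- possible since $C_0^\infty(\rr^d)$ (viewed through its restriction to $\{z'=0\}$, which is how it is tested against $d\mu = dz_1\times\delta(z')$) and $C(\overline{\square(0)})$ are separable and their tensor products span a dense subspace --- and perform a diagonal extraction to obtain a subsequence along which $L_\ve(\varphi_k,\phi_l)$ converges for all $k,l$. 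By the uniform bound above, the limit is a bounded linear functional on the closed linear span of these products, i.e. on all of $L^2(\rr^d\times\square(0),d\mu\times dy)$, of norm at most $C\,|\square(0)|^{-1/2}$. The Riesz representation theorem furnishes $v \in L^2(\rr^d\times\square(0),d\mu\times dy)$ representing it, and density lets me pass from the countable family back to arbitrary $(\varphi,\phi)$ by an approximation argument, giving
\[
\lim_{\ve\to 0} L_\ve(\varphi,\phi) = \frac{1}{|\square(0)|}\int_{\rr^d}\int_{\square(0)} v(z,y)\,\varphi(z)\,\phi(y)\, dy\, d\mu(z),
\]
which is exactly $v_\ve \overset{2}{\rightharpoonup} v$ along the extracted subsequence.

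I expect the genuine work to lie in the first step --- the convergence of $\int\varphi^2\phi^2(\cdot/\sqrt\ve)\,d\mu_\ve$ --- since that is where the three scales $1,\sqrt\ve,\ve$ and the locally periodic, non-cylindrical geometry all interact, and where one must verify that freezing the slow variables on each scaled period produces only a uniform $o(1)$ error; the rest is the standard separability/diagonal/Riesz machinery together with the (routine but needed) density of the tensor products $\varphi\otimes\phi$ in the target space with respect to the product of the singular measure $\mu$ and Lebesgue measure on $\square(0)$.
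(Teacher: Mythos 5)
The paper states this lemma without proof, deferring to the two-scale compactness theory of \cite{zhikov2000extension,Zh-2004}, as carried over to the thin-domain setting in \cite{pettersson2017two}; there is therefore no in-paper proof against which to compare line by line. That said, your reconstruction is correct and is precisely the argument those references use. The substance is, as you identify, step~(a): the mean-value convergence $\int\varphi^2\phi^2(\cdot/\sqrt\ve)\,d\mu_\ve\to|\square(0)|^{-1}\int\!\int\varphi^2\phi^2\,dy\,d\mu$ for $\varphi\in C_0^\infty(\rr^d)$, $\phi\in C^\infty(\ttt^1\times\rr^{d-1})$. Your sketch of it mirrors the paper's own proof of Lemma~\ref{lm:conv-measures} with the oscillating factor $\phi(\cdot/\sqrt\ve)$ carried along through the same decomposition into scaled periods $I_j^\ve$, slow-variable freezing, and $y_1$-periodicity of $Q$ and $\phi$; note that it is the compact support of $\varphi$ that localizes the cross-section $\square(\sqrt\ve\,\cdot)$ to $\square(0)$ in the limit, exactly as in that lemma. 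Step~(b) --- the uniform Cauchy--Schwarz bound, diagonal extraction on a countable dense family of tensor products, Riesz representation in $L^2(\rr^d\times\square(0),d\mu\times dy)$, and density back to arbitrary $(\varphi,\phi)$ --- is the routine part, and you correctly flag it as such. One small polish: when passing from the countable family to general test pairs, choosing $\{\varphi_k\}$ and $\{\phi_l\}$ dense in sup-norm on uniformly bounded supports (rather than dense in $L^2$) makes the needed uniform-in-$\ve$ estimate on $|L_\ve(\varphi,\phi)-L_\ve(\varphi_k,\phi_l)|$ elementary, using only Cauchy--Schwarz and $\sup_\ve\mu_\ve(K)<\infty$ for compact $K$ (a byproduct of Lemma~\ref{lm:conv-measures}); with $L^2$-density one must re-invoke (a) for the cross terms of the difference. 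This is streamlining, not a gap: you have carried out the work the paper delegates to its citations.
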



\subsection{Passage to the limit}

\begin{lemma}
Let $(\nu_1^\ve, v_1^\ve)$ be the first eigenpair to \eqref{eq:rescaled}.
Then, along some subsequence, $(\nu_1^\ve, \psi_1^\ve v_1^\ve)$ converge in $\mathbf{R}$ and 
weakly two-scale in $L^2(\rr^d, d\mu_\ve)$, respectively, to a pair $(\nu, \psi_1(0,y)v)$,
where $(\nu, v)$ is an eigenpair to the effective problem \eqref{eq:limitproblem}.
\end{lemma}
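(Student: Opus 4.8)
\emph{Overview of the plan.} I would extract two-scale limits from the a priori bounds of Lemmas~\ref{lm:lambdaestimate} and~\ref{lm:v1veestimate}, identify the structure of these limits (using that the factorization has converted the lateral Dirichlet condition into a Neumann-type condition with the degenerate weight $(\psi_1^\ve)^2$, as noted after \eqref{eq:cellproblemN1}), recognise the auxiliary cell problem \eqref{eq:cellproblemN1} as the equation for the first-order corrector, and finally pass to the limit in the weak form \eqref{eq:weak-rescaled} to obtain the weak form of \eqref{eq:limitproblem}.

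\emph{Step 1: compactness and the structure of the limit.} By Lemma~\ref{lm:lambdaestimate} the numbers $\nu_1^\ve = \ve\lambda_1^\ve - \mu_1(0)/\ve$ are bounded, so along a subsequence $\nu_1^\ve \to \nu$. By Lemma~\ref{lm:v1veestimate} the sequences $\psi_1^\ve v_1^\ve$, $\psi_1^\ve \nabla_z v_1^\ve$ and $\psi_1^\ve z_1 v_1^\ve$ are bounded in $L^2(\rr^d, d\mu_\ve)$, so by the two-scale compactness principle (Lemma~\ref{tm:twoscaleprinciple}), after passing to a further subsequence, $\psi_1^\ve v_1^\ve \overset{2}{\rightharpoonup} V_0$ and $\psi_1^\ve\nabla_z v_1^\ve \overset{2}{\rightharpoonup} G$, while $\psi_1^\ve z_1 v_1^\ve \overset{2}{\rightharpoonup} z_1 V_0$. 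Since $\psi_1^\ve(z) = \psi_1(\sqrt\ve z_1, z/\sqrt\ve)$ is a smooth locally periodic profile converging to $\psi_1(0,y)$ in the sense of an admissible oscillating test function, and since the gradient bound forbids fast oscillations in the leading term of $v_1^\ve = (\psi_1^\ve v_1^\ve)/\psi_1^\ve$ — the effective transverse problem being of Neumann type — one shows, as is standard in dimension-reduction two-scale homogenization, that the leading two-scale limit of $v_1^\ve$ is independent of the cell variable: there is $v$, depending on $z_1$ only, with $V_0(z,y) = \psi_1(0,y)\,v(z_1)$ and $G(z,y) = \psi_1(0,y)\bigl(v'(z_1)e_1 + \nabla_y \widehat v(z_1,y)\bigr)$ for a corrector $\widehat v(z_1,\cdot)\in H^1(\square(0),\psi_1^2(0,y))$. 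Weak lower semicontinuity of the norms against the bounds of Lemma~\ref{lm:v1veestimate}, together with $a^\eff > 0$, shows $v\in H^1(\rr)$ with $z_1 v\in L^2(\rr)$, i.e. $v$ lies in the form domain of \eqref{eq:limitproblem}.

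\emph{Step 2: the corrector equation.} Testing \eqref{eq:weak-rescaled} with $\varphi^\ve(z) = \sqrt\ve\,\phi_1(z_1)\,\eta(z/\sqrt\ve)$, $\phi_1\in C_0^\infty(\rr)$, $\eta\in C^\infty(\ttt^1\times\rr^{d-1})$ (which is admissible since the weight kills the lateral boundary and $\phi_1$ the ends), all terms except the flux term are $O(\sqrt\ve)$, and since $\nabla_z\varphi^\ve = \phi_1(\nabla_y\eta)(z/\sqrt\ve) + O(\sqrt\ve)$, passing to the limit in $\int_{\widetilde{\Oe}}\widetilde{A^\ve}\nabla_z v_1^\ve\cdot\nabla_z\varphi^\ve$ (using $\widetilde{A^\ve}\nabla_z v_1^\ve = \psi_1^\ve A(\sqrt\ve z_1,\tfrac{z}{\sqrt\ve})\,(\psi_1^\ve\nabla_z v_1^\ve)$, a strongly convergent coefficient times a weakly two-scale convergent field) yields, for a.e.\ $z_1$,
$\int_{\square(0)}\psi_1^2(0,y)A(0,y)\bigl(v'(z_1)e_1 + \nabla_y\widehat v(z_1,y)\bigr)\cdot\nabla_y\eta\,dy = 0$
for all $\eta$. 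This is the weak form of \eqref{eq:cellproblemN1}; by uniqueness of its solution (normalized as stated) we get $\widehat v(z_1,y) = v'(z_1)N(y)$.

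\emph{Step 3: the macroscopic equation and non-triviality.} Now test \eqref{eq:weak-rescaled}, rewritten against $d\mu_\ve$, with $\varphi(z) = \phi(z_1)$, $\phi\in C_0^\infty(\rr)$. The flux term converges, after inserting $\widehat v = v'N$, to $\int_\rr a^\eff v'\phi'\,dz_1$ with $a^\eff$ as in \eqref{eq:a^eff}; the potential $C^\ve$ term converges to $\int_\rr c^\eff v\phi\,dz_1$, since $C^\ve/\psi_1^\ve$ is uniformly bounded, the $O(\ve)$ summand in \eqref{eq:bigCeff} drops, and averaging the limiting coefficient against $\psi_1(0,y)$ over $\square(0)$ gives $c^\eff$; the term $\ve^{-1}(\mu_1(\sqrt\ve z_1)-\mu_1(0))(\psi_1^\ve)^2 v_1^\ve$ converges to $\tfrac12\mu_1''(0)\int_\rr z_1^2 v\phi\,dz_1$ by Taylor's theorem and \eqref{eq:H}, with the bound on $\psi_1^\ve z_1 v_1^\ve$ controlling the tail in $z_1$ and $\int_{\square(0)}\psi_1^2(0,y)\,dy = 1$; the right-hand side converges to $\nu\int_\rr v\phi\,dz_1$. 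Hence $v$ satisfies the weak form of \eqref{eq:limitproblem}. Finally, $v\neq 0$: the estimate $\|\psi_1^\ve z_1 v_1^\ve\|_{L^2(d\mu_\ve)}\le C$ gives $\int_{\{|z_1|>R\}}(\psi_1^\ve v_1^\ve)^2\,d\mu_\ve \le CR^{-2}$, ruling out escape of mass to infinity, and on bounded slabs the weighted bounds together with the fact that $\psi_1^2(0,\cdot)$ is comparable to the squared distance to the lateral boundary (Hopf's lemma) give a compact embedding and hence strong two-scale convergence of $\psi_1^\ve v_1^\ve$; passing to the limit in the normalization \eqref{eq:normalization-v^eps} then gives $\|v\|_{L^2(\rr)}^2 = |\square(0)|\neq 0$. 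Thus $(\nu,v)$ is a genuine eigenpair of \eqref{eq:limitproblem} and $\psi_1^\ve v_1^\ve\overset{2}{\rightharpoonup}\psi_1(0,y)v$.

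\emph{Main obstacle.} The delicate point is the upgrade from weak to strong two-scale convergence of $\psi_1^\ve v_1^\ve$ needed in the last step: one must exclude loss of mass both at $z_1=\infty$ (handled by the weighted estimate for $z_1 v_1^\ve$) and in the degenerate transverse directions, which requires the compactness, uniform in $\ve$, of the embedding $H^1(\widetilde{\Oe},(\psi_1^\ve)^2,\widetilde{\Sigma_\ve})\hookrightarrow L^2(\widetilde{\Oe},(\psi_1^\ve)^2)$; this in turn rests on the non-degeneracy of the normal derivative of $\psi_1$ on $\partial\square(x_1)$, uniformly in $x_1$.
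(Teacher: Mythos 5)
Your proposal follows essentially the same two-scale-compactness-plus-passage-to-the-limit route as the paper, and the structure of the argument (a priori bounds from Lemmas~\ref{lm:lambdaestimate} and~\ref{lm:v1veestimate}, weak two-scale limits, identification of the limit profile as $\psi_1(0,y)v(z_1)$, a corrector equation recovering \eqref{eq:cellproblemN1}, then the macroscopic weak form) is the same. Two points are worth flagging.

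First, in Step~1 you write $G=\psi_1(0,y)(v'(z_1)e_1+\nabla_y\widehat v)$, i.e.\ you fix the transverse component of the macroscopic gradient at zero from the outset. The paper keeps the $dz_1\times d\delta(z')$-gradient in the Zhikov form $\nabla^{dz_1\times d\delta(z')}v=(\partial_{z_1}v,r)$ with $r$ undetermined, and then shows explicitly that $a^\eff_{ij}=0$ for $(i,j)\neq(1,1)$, so the effective flux is independent of $r$. Your shortcut is ultimately consistent (the transverse part can be absorbed into the corrector), but ``as is standard in dimension-reduction two-scale homogenization'' is doing real work here; the paper's explicit factorization argument, using the oscillating test functions $\sqrt\ve\,\varphi(z)\phi(z/\sqrt\ve)$ and the solenoidal fields $\Phi$ with $\mop{div}_y(\psi_1^2(0,y)\Phi)=0$, is what makes the claim rigorous in the degenerate-weight setting.

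Second, your Step~3 addresses a point the paper's proof of this lemma does not: that $v\neq0$, which is needed for $(\nu,v)$ to genuinely be an eigenpair rather than just a solution of the weak identity. This is a real gap in the paper's exposition at this spot (the paper postpones the strong two-scale convergence discussion to a remark after Theorem~\ref{tm:one} is assembled). Your mechanism — ruling out mass escape at $z_1=\pm\infty$ via the bound on $\psi_1^\ve z_1 v_1^\ve$, and using a compact embedding on bounded slabs with the Hopf-type nondegeneracy of $\psi_1$ near $\partial\square(x_1)$ to upgrade to strong convergence — is exactly what is needed, and you rightly identify the $\ve$-uniform compactness of $H^1(\widetilde{\Oe},(\psi_1^\ve)^2,\widetilde{\Sigma_\ve})\hookrightarrow L^2(\widetilde{\Oe},(\psi_1^\ve)^2)$ as the sticking point. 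This is not fully carried out in your sketch, but it is also not carried out in the paper; so your proposal actually makes the latent difficulty more visible. If you were to complete this, Lemma~\ref{lm:boundarypointlemma} together with the regularity of $\psi_1(x_1,\cdot)$ in $x_1$ is the right ingredient for the uniform weight comparability.
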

Note that at this point we do not claim that $(\nu, v)$ is the first eigenpair of the limit problem. 
\begin{proof}
By the two-scale compactness principle,
using the a priori estimates in Lemma \ref{lm:v1veestimate} for $\psi_1^\ve \, v_1^\ve$, 
there exist $v, w \in L^2(\mathbf{R}^d \times \square(0), dz_1 \times d\delta(z') \times dy)$
such that along some subsequence, still denoted by $\ve$, 
the following weak two-scale convergences hold
in $L^2(\mathbf{R}^d, d\mu_\ve)$:
\begin{align}\label{eq:2scalew}
\psi_1\big( \sqrt{\ve}z_1, \frac{z}{\sqrt{\ve}} \big) v_1^\ve & \overset{2}{\rightharpoonup} w(z_1,y), \\
\psi_1\big( \sqrt{\ve}z_1, \frac{z}{\sqrt{\ve}} \big) \nabla_z v_1^\ve & \overset{2}{\rightharpoonup} p(z_1,y),\label{eq:2scalew2}
\end{align}
as $\ve$ tends to zero, and in particular $w, p \in L^2(\rr \times \square(0))$.
Let $\nu$ be such that $\nu^\ve$ converges to $\nu$ in $\rr$,
restricting to a further subsequence if necessary.

By the boundedness of the gradient of $v_1^\ve$, Lemma \ref{lm:v1veestimate} or \eqref{eq:2scalew}--\eqref{eq:2scalew2},
one notes that necessarily $w = \psi_1(0,y)v(z_1)$ for some function
$v = v(z_1) \in L^2(\rr)$.
To see this one uses the two-scale convergence in \eqref{eq:2scalew} and oscillating test functions of the form 
\begin{align*}
\Phi^\ve(z) & = \sqrt{\ve} \varphi(z) \phi\big( \frac{z}{\sqrt{\ve}} \big),
\end{align*}
where $\varphi \in C^\infty_0(\mathbf{R}^d)$ and $\phi \in C^\infty_0(\ttt^1 \times \mathbf{R}^{d-1})$.
On the one hand,
\begin{align}
& \int_{\rr^d} (\psi_1^2)^\ve v_1^\ve \partial_{z_i} \Phi^\ve \,d\mu_\ve \notag\\
& \quad = 
\int_{\rr^d} (\psi_1^2)^\ve v_1^\ve \big( \sqrt{\ve}(\partial_{z_i}\varphi)\phi(y) + \varphi \partial_{y_i}\phi \big)(\sqrt{\ve}z_1, \frac{z}{\sqrt{\ve}})  \,d\mu_\ve \notag\\
& \quad \to \int_{\mathbf{R}} \frac{1}{|\square(0)|}\int_{\square(0)} \psi_1(0,y) w(z_1,y) \varphi \partial_{y_i}\phi \,dy \,dz_1,\label{eq:k}
\end{align}
as $\ve$ tends to zero, where one has used that $\psi_1^\ve \partial_{z_i} \Phi^\ve$
is strongly two-scale convergent to $\psi_1(0,y)\varphi(z)\partial_{y_i}\phi(y)$ in $L^2(\rr^d, d\mu_\ve)$.
On the other hand,
\begin{align}
\int_{\rr^d} (\psi_1^2)^\ve v_1^\ve \partial_{z_i} \Phi^\ve \,d\mu_\ve
& = - \int_{\rr^d} \partial_{z_i}((\psi_1^2)^\ve v_1^\ve ) \Phi^\ve \,d\mu_\ve.\label{eq:kk}
\end{align}
Because
\begin{align*}
& \partial_{z_i} ( \psi_1^2(\sqrt{\ve}z_1, \frac{z}{\sqrt{\ve}}) v_1^\ve ) \\
& \quad = 
(\psi_1^2)^\ve \partial_{z_i} v_1^\ve
+ 2 \sqrt{\ve} ( \partial_{z_i}\psi_1 )^\ve \psi_1^\ve v_1^\ve 
+ \frac{1}{\sqrt{\ve}} (\partial_{y_i}\psi_1 )^\ve \psi_1^\ve v_1^\ve,
\end{align*}
one has, by the boundedness of $\psi_1^\ve \nabla_z v_1^\ve$,
\begin{align}
\int_{\rr^d} \partial_{z_i}((\psi_1^2)^\ve v_1^\ve ) \Phi^\ve \,d\mu_\ve
& \to 
\int_{\rr} \frac{1}{|\square(0)|} \int_{\square(0)} 2 w \varphi(z) \phi(y) \partial_{y_i}\psi_1(0,y)  \,dy \,dz_1,\label{eq:kkk}
\end{align}
as $\ve$ tends to zero.
It follows from \eqref{eq:k}--\eqref{eq:kkk} that
\begin{align*}
\psi_1(0,y)\nabla_y w & = 2 w \nabla \psi_1(0,y),
\end{align*}
almost everywhere in $\rr \times \square(0)$.
One concludes that,
\begin{align*}
w(z_1, y) & = \psi_1(0,y)v(z_1),
\end{align*}
for some $v = v(z_1) \in L^2(\rr)$.

We proceed to the two-scale limit $p$ of $\psi_1^\ve \nabla_z v_1^\ve$.
Let $\Phi = \Phi(z_1,y)$ be such that
\begin{align}\label{eq:divPhi}
\mop{div}_y( \psi_1^2(0,y)\Phi(z_1,y) ) & = 0 \quad \text{ in } \ttt \times \rr^{d-1}.
\end{align}
Then one the one hand by \eqref{eq:2scalew2},
\begin{align*}
\int_{\rr^d} (\psi_1^2)^\ve \nabla_z v_1^\ve \cdot \Phi^\ve \,d\mu_\ve
& \to \int_{\rr} \frac{1}{|\square(0)|} \int_{\square(0)} \psi_1(0,y)p(z_1,y) \cdot \Phi(0,y) \,dy \,dz_1,
\end{align*}
as $\ve$ tends to zero.
On the other hand by the choice of test functions, compactly supported $\Phi$,
\begin{align*}
& \int_{\rr^d} (\psi_1^2)^\ve \nabla_z v_1^\ve \cdot \Phi^\ve \,d\mu_\ve \\
& \quad = \int_{\rr^d} (\psi_1^2)(0,\frac{z}{\sqrt \ve}) \nabla_z v_1^\ve \cdot \Phi^\ve \,d\mu_\ve + o(1) \\
& \quad = 
- \int_{\rr^d} v_1^\ve (\psi_1^2)^\ve (\mop{div}_z \Phi)^\ve \,d\mu_\ve + o(1) \\
& \quad \to - \int_{\rr} \frac{1}{|\square(0)|} \int_{\square(0)} \psi_1^2(0,y)v(z_1) (\mop{div}_z \Phi)(0,y) \,dy \,dz_1 \\
& \quad = \int_{\rr} \frac{1}{|\square(0)|} \int_{\square(0)} \nabla_z v(z_1) \cdot \psi_1^2(0,y)\Phi(0,y) \,dy \,dz_1,
\end{align*}
as $\ve$ tends to zero, for any $dz_1 \times d\delta(z')$ gradient $\nabla_z v$ of $v$.
Therefore,
\begin{align*}
\int_{\rr} \int_{\square(0)} \Big( \frac{p}{\psi_1(0,y)} + \nabla_z v \Big) \cdot \psi_1^2(0,y)\Phi(0,y) \,dy \,dz_1 & = 0,
\end{align*}
for any solution $\Phi$ to \eqref{eq:divPhi}.
By the solenoidal nature of $\psi_1^2(0,y) \Phi(z_1,y)$,
\begin{align*}
\frac{p}{\psi_1(0,y)} + \nabla_z v & = \nabla_y q,
\end{align*}
for some $q \in L^2(\rr, H^1(\square(0)))$.
It follows that 
\begin{align*}
p & = \psi_1(0,y)( \nabla_z v + \nabla_y q ),
\end{align*}
almost everywhere in $\rr \times \square(0)$, 
for some $q \in L^2(\mathbf{R}, H^1(\square(0)))$.

To sum up,
by the two-scale compactness principle for sequences with bounded gradient,
there exists $v \in L^2(\rr)$ and $q \in L^2(\rr, H^1(\square(0)))$ such that two-scale weakly in
$L^2( \rr^d,  d\mu_\ve )$,
\begin{align*}
\psi_1^\ve v_1^\ve & \overset{2}{\rightharpoonup} \psi_1(0,y)v(z_1),\\
\psi_1^\ve \nabla_z v_1^\ve & \overset{2}{\rightharpoonup} \psi_1(0,y) (\nabla^{dz_1 \times d\delta(z')} v(z_1) + \nabla_y q(z_1,y)),
\end{align*}
as $\ve$ tends to zero.
One notes that
\begin{align*}
\nabla^{dz_1 \times d\delta(z')} v
= 
(\partial_{z_1}v, r),
\end{align*}
where $r \in L^2(\rr)$ is some transverse gradient of $v$ with respect to the measure $dz_1 \times d\delta(z')$.

We are now in a position to pass to the limit in the equation for $\nu^\ve, v^\ve$.
The variational form of the equation for $\nu^\ve, v^\ve$, 
in terms of the measure $d\mu_\ve$,
is
\begin{align}\label{eq:varformdmuve}
& \int_{\rr^d} (\psi_1^2 A)\big( \sqrt{\ve}z_1, \frac{z}{\sqrt{\ve}} \big) \nabla_z v^\ve \cdot \nabla_z \varphi \,d\mu_\ve \notag \\
& \quad + \int_{\rr^d} (\psi_1( b + \ve c ) + \frac{\mu_1(x_1) - \mu_1(0)}{\ve} \psi_1^2 )\big( \sqrt{\ve}z_1, \frac{z}{\sqrt{\ve}} \big) v^\ve \varphi \,d\mu_\ve \notag \\
& = \nu^\ve \int_{\rr^d} \psi_1^2\big( \sqrt{\ve}z_1, \frac{z}{\sqrt{\ve}} \big) v^\ve \varphi \,d\mu_\ve, 
\end{align}
for any $\varphi \in H^1(\rr^d)$.

We will pass to the limit as $\ve$ tends to zero in \eqref{eq:varformdmuve}
using the two-scale converge.
Let $\varphi \in C^\infty_0(\rr^d)$.
Then
\begin{align*}
& \int_{\rr^d} (\psi_1^2 A)\big( \sqrt{\ve}z_1, \frac{z}{\sqrt{\ve}} \big) \nabla_z v^\ve \cdot \nabla_z \varphi \,d\mu_\ve \\
& \quad = \int_{\rr^d} \psi_1\big( \sqrt{\ve}z_1, \frac{z}{\sqrt{\ve}} \big) \nabla_z v^\ve \cdot (\psi_1 A)\big( \sqrt{\ve}z_1, \frac{z}{\sqrt{\ve}} \big) \nabla_z \varphi \,d\mu_\ve \\
& \quad \to \int_{\rr} \int_{\square(0)} p(z_1,y) \cdot (\psi_1 A)(0,y) \nabla_z \varphi(z_1,0) \,dy \, dz_1 \\
& \quad = \int_{\rr} \int_{\square(0)} (\psi_1 A)(0,y)p(z_1,y) \,dy \cdot \nabla_z \varphi(z_1,0) \, dz_1,
\end{align*}
as $\ve$ tends to zero, because
$(\psi_1 A)\big( \sqrt{\ve}z_1, \frac{z}{\sqrt{\ve}} \big) \nabla_z \varphi$
converges strongly two-scale in $L^2(\rr^d, d\mu_\ve)$
to
$(\psi_1 A)(0,y)\nabla_z \varphi (z_1,0) \in L^2(\rr \times \square(0))$.
Because $((b + \ve c)\psi_1)\big( \sqrt{\ve}z_1, \frac{z}{\sqrt{\ve}} \big) \varphi$ 
converges strongly two-scale in $L^2(\rr^d, d\mu_\ve)$ to $b(0,y)\psi_1(0,y)\varphi(z_1,0) \in L^2(\rr \times \square(0))$,
\begin{align*}
& \int_{\rr^d} ( \psi_1( b + \ve c) )\big( \sqrt{\ve}z_1, \frac{z}{\sqrt{\ve}} \big) v^\ve \varphi  \,d\mu_\ve  \\
& \quad \to \int_{\rr} \int_{\square(0)} b(0,y) w(z_1,y)\,dy \, \varphi(z_1,0) \,dz_1,
\end{align*}
as $\ve$ tends to zero.
Because 
$\nu^\ve \psi_1\big( \sqrt{\ve}z_1, \frac{z}{\sqrt{\ve}} \big) \varphi$ 
converges strongly two-scale in $L^2(\rr^d, d\mu_\ve)$ to~$\nu \psi_1(0,y)\varphi(z_1,0) \in L^2(\rr \times \square(0))$,
\begin{align*}
& \nu^\ve \int_{\rr^d} \psi_1^2\big( \sqrt{\ve}z_1, \frac{z}{\sqrt{\ve}} \big) v^\ve \varphi \,d\mu_\ve \\
& \quad \to \nu \int_\rr \int_{\square(0)} \psi_1(0,y)w(z_1,y) \,dy \, \varphi(z_1,0) \,dz_1,
\end{align*}
as $\ve$ tends to zero.
By the Taylor theorem, and hypothesis~\eqref{eq:H},
\begin{align*}
\frac{\mu_1(\sqrt{\ve}z_1) - \mu_1(0)}{\ve} = \frac{1}{2}\mu_1''(0) z_1^2 + o(z_1^2),
\end{align*}
as $\ve$ tends to zero, so by the compact support of $\varphi$,
\begin{align*}
& \int_{\rr^d} \frac{\mu_1(\sqrt{\ve}z_1) - \mu_1(0)}{\ve}\psi_1^2\big( \sqrt{\ve}z_1, \frac{z}{\sqrt{\ve}} \big)  v^\ve \varphi \,d\mu_\ve \\
& \quad = \int_{\rr^d} \frac{1}{2} \mu_1''(0) z_1^2 \psi_1^2\big( \sqrt{\ve}z_1, \frac{z}{\sqrt{\ve}} \big)  v^\ve \varphi  \,d\mu_\ve + o(1) \\
& \quad \to \int_{\rr} \frac{1}{2} \mu_1''(0) z_1^2 \int_{\square(0)}\psi_1(0,y) w(z_1,y) \,dy \, \varphi(z_1,0) \,dz_1,
\end{align*}
as $\ve$ tends to zero,
because $\frac{1}{2}\mu_1''(0) z_1^2 \psi_1\big( \sqrt{\ve}z_1, \frac{z}{\sqrt{\ve}} \big) \varphi$
converges strongly two-scale in $L^2(\rr^d, d\mu_\ve)$ to $\frac{1}{2}\mu_1''(0) z_1^2 \psi_1(0,y) \varphi
\in L^2(\rr \times \square(0))$.
In conclusion,
\begin{align}\label{eq:limit1}
& \int_{\rr} \int_{\square(0)} (\psi_1 A)(0,y)p(z_1,y) \,dy \cdot \nabla_z \varphi(z_1,0) \, dz_1  \notag\\
& \quad +
\int_{\rr} \int_{\square(0)} b(0,y) w(z_1,y)\,dy \, \varphi(z_1,0) \,dz_1 \notag\\
& \quad +
\int_{\rr} \frac{1}{2} \mu_1''(0) z_1^2 \int_{\square(0)}\psi_1(0,y) w(z_1,y) \,dy \, \varphi(z_1,0) \,dz_1 \notag\\
& =
\nu \int_\rr \int_{\square(0)} \psi_1(0,y)w(z_1,y) \,dy \, \varphi(z_1,0) \,dz_1,
\end{align}
for any $\varphi \in C^\infty_0(\rr^d)$.

Using 
\begin{align*}
w(z_1,y) & = \psi_1(0,y)v(z_1), \\
p(z_1, y) & = \psi_1(0,y)\big((\partial_{z_1}v(z_1), r) + \nabla_y q(z_1,y) \big),
\end{align*}
and the normalization of $\psi_1$,
transforms \eqref{eq:limit1} into
\begin{align}\label{eq:aux77}
& \int_{\rr} \int_{\square(0)} (\psi_1^2 A)(0,y)\big((\partial_{z_1}v(z_1), r) + \nabla_y q(z_1,y) \big) \,dy \cdot \nabla_z \varphi(z_1,0) \, dz_1  \notag\\
& \quad +
\int_{\rr} \int_{\square(0)} (\psi_1b)(0,y) \,dy \,  v(z_1) \varphi(z_1,0) \,dz_1 \notag\\
& \quad +
\int_{\rr} \frac{1}{2} \mu_1''(0) z_1^2 v(z_1) \varphi(z_1,0) \,dz_1 \notag\\
& =
\nu \int_\rr v(z_1) \varphi(z_1,0) \,dz_1,
\end{align}
for any $\varphi \in C^\infty_0(\rr^d)$ (should be $C^\infty(\rr^d)$ if possible).

To compute $q$ one uses oscillating test functions of the form
\begin{align*}
\Phi^\ve(z) & = \sqrt{\ve} \phi(z) \varphi\big( \frac{z}{\sqrt{\ve}} \big),
\end{align*}
with $\phi \in C^\infty_0(\rr^d)$ and $\varphi \in C^\infty( \overline{\square(0)} )$.
One has
\begin{align*}
\nabla_z \Phi^\ve(z) & = \phi(z) \nabla_y\varphi\big( \frac{z}{\sqrt{\ve}} \big)
+ \sqrt{\ve} \varphi\big( \frac{z}{\sqrt{\ve}} \big)\nabla_z \phi(z) .
\end{align*}
By passing to the limit as $\ve$ tends to zero in \eqref{eq:varformdmuve} with test functions $\Phi^\ve$, using their fixed compact support, one obtains from the weak two-scale convergence of 
$\psi_1\big( \sqrt{\ve}z_1, \frac{z}{\sqrt{\ve}} \big)v_1^\ve$,
\begin{align}\label{eq:aux4}
\int_{\rr} \int_{\square(0)} (\psi_1^2A)(0,y) ((\partial_{z_1}v, r) + \nabla_y q) \cdot \nabla_y \varphi  \,dy \, \phi(z_1,0)  \,dz_1 & = 0,
\end{align}
for any $\phi \in C^\infty_0(\rr^d)$ and any $\varphi \in C^\infty( \overline{\square(0)} )$.
If
\begin{align}\label{eq:qfactorization}
q(z_1,y) = N(y) \cdot (\partial_{z_1}v(z_1),r),
\end{align}
equation \eqref{eq:aux4} requires for $N(y)$ to satisfy
\begin{align*}
& \int_{\rr} \int_{\square(0)} \sum_{r,j=1}^d (\psi_1^2a_{rj})(0,y) 
\partial_{y_j} N_1 \partial_{y_r} \varphi 
\,dy \, \partial_{z_1}v \phi(z_1,0) \,dz_1 \\
& \quad
+ \int_{\rr} \int_{\square(0)} \sum_{k=2}^d \sum_{r,j=1}^d (\psi_1^2a_{rj})(0,y) 
\partial_{y_j} N_k \partial_{y_r} \varphi 
\,dy \, r_k \phi(z_1,0) \,dz_1 \\
& =
- \int_{\rr} \int_{\square(0)} \sum_{r=1}^d (\psi_1^2a_{r1})(0,y)\partial_{y_r}
\varphi \,dy \, \partial_{z_1}v \, \phi(z_1,0) \, dz_1 \\
& \quad 
- \int_{\rr} \int_{\square(0)} \sum_{r=1}^d \sum_{j=2}^d (\psi_1^2a_{rj})(0,y)\partial_{y_r}
\varphi \,dy \, r_j \, \phi(z_1,0) \, dz_1,
\end{align*}
for any $\phi \in C^\infty_0(\rr^d)$ and any $\varphi \in C^\infty( \overline{\square(0)} )$.
Let $N_k \in H^1(\square(0),\psi_1^2(0,y))$ be such that
\begin{align*}
\int_{\square(0)} N_k(y) \psi_1^2(0,y) \,dy = 0,
\end{align*}
and 
\begin{align*}
-\mop{div}_y(  (\psi_1^2A)(0,y) \nabla_y N_k  )
& = \sum_{j = 1}^d \partial_{y_j} (\psi_1^2 a_{kj})(0,y), \quad y \in \square(0).
\end{align*}
That is, 
\begin{align}\label{eq:weakNk}
\int_{\square(0)} (\psi_1^2A)(0,y)\nabla_y N_k \cdot \nabla_y \varphi \,dy
& = - \int_{\square(0)} \sum_{j=1}^d (\psi_1^2 a_{kj})(0,y) \frac{\partial \varphi}{\partial y_j} \,dy,
\end{align}
for any $\varphi \in H^1(\square(0),\psi_1^2(0,y))$.
Then $N_k$ are well defined because the bilinear form on the left hand side in \eqref{eq:weakNk} is coercive on\linebreak[4] $H^1(\square(0),\psi_1^2(0,y))/\rr$, by the ellipticity 
condition on $A$ and positivity of $\psi_1^2(0,y)$,
and the compatibility condition is satisfied:
\begin{align*}
\int_{\square(0)} \mop{div}( (\psi_1^2 a_{k})(0,y) ) \,dy
& = \int_{\square(0)} (\psi_1^2 a_{k})(0,y) \cdot \nu \,dy = 0,
\end{align*}
where $a_k$ is the $k$th row of $A$, 
using that $\psi_1(0,y)$ is zero on $\partial \square(0)$.
Therefore, the vector $N$ is such that \eqref{eq:qfactorization} holds,
and in particular,
\begin{align}\label{eq:qlimN}
\psi_1\big( \sqrt{\ve}z_1, \frac{z}{\sqrt{\ve}}\big) \nabla_z v_1^\ve 
& \overset{2}{\rightharpoonup}
\psi_1(0,y)( (\partial_{z_1}v, r) + \nabla_y N \cdot  (\partial_{z_1}v, r) ),
\end{align}
weakly in $L^2(\rr^d, d\mu_\ve)$ as $\ve$ tends to zero.

Let the effective $d \times d$ matrix $A^\eff$ with entries $a^\eff_{ij}$ be defined by 
\begin{align}\label{eq:aeffdef}
a^\eff_{ij} & = \int_{\square(0)} \sum_{k=1}^d(\psi_1^2a_{ik})(0,y)( \delta_{kj} + \partial_{y_k}N_j(y) ) \,dy.
\end{align}
We compute the effective flux $A^\eff (\partial_{z_1}v, r)$.
Let $\varphi(z) = z \cdot \phi(z_1)$, where $\phi(z_1)$ is a vector with components $\phi_j \in C^\infty_0(\rr)$ and $\phi_1(z_1) = 0$.
Then 
\begin{align*}
\varphi & \to 0, \\
\nabla_z \varphi & \to \phi(z_1),
\end{align*}
strongly in $L^2(\rr^d, d\mu_\ve)$, as $\ve$ tends to zero.
By passing to the limit as $\ve$ tends to zero in the variational form \eqref{eq:varformdmuve}
of the equation for $v_1^\ve$ (or equivalently setting $\varphi = z \cdot \phi$ in \eqref{eq:aux77}),
and using the definition of $A^\eff$ and the characterization of the limit of $\nabla_z v_1^\ve$ (\eqref{eq:qfactorization},\eqref{eq:qlimN}), one obtains
\begin{align*}
\int_\rr A^\eff ( \partial_{z_1}v, r ) \cdot \phi(z_1) \,dz_1 & = 0,
\end{align*}
for any $\phi(z_1) \in C^\infty_0(\rr)$.
It follows that the transverse component of the effective flux is zero:
\begin{align}\label{eq:fluxxxx}
A^\eff (\partial_{z_1}v, r)
= \Big( \sum_j a^\eff_{1j} \partial_{z_1}v, 0 \Big)
= A^\eff(\partial_{z_1}v, 0),
\end{align}
recalling that $\phi_1 = 0$.
More precisely,
setting $\varphi = y_i$, $i = 2, \ldots, d$, respectively,
as test functions in the variational form \eqref{eq:weakNk} of the equations for $N_j$ gives
\begin{align*}
\int_{\square(0)} \sum_{k=1}^d (\psi_1^2 a_{ik})(0,y)( \delta_{kj} + \partial_{y_k}N_j(y) ) \,dy = 0,
\end{align*}
for $j = 1, \ldots, d$.
In view of the definition of $A^\eff$ \eqref{eq:aeffdef}, this means that
\begin{align}\label{eq:aeffijiszero}
a^\eff_{ij} & = 0,
\end{align}
for all $(i,j) \neq (1,1)$.
That is, all transverse components of the effective matrix are zero.
It follows that the effective flux in \eqref{eq:fluxxxx} reduces to 
\begin{align*}
A^\eff (\partial_{z_1}v, r)
= ( a^\eff_{11} \partial_{z_1}v, 0).
\end{align*}
One may verify that $a^\eff_{11} > 0$ as follows.
Use $N_i$ as a test function in the variational form \eqref{eq:weakNk} of the equation for $N_j$ to obtain,
\begin{align}\label{eq:aeffwriting}
a^\eff_{ij} & = \int_{\square(0)} (\psi_1^2 A)(0,y)\nabla_y ( y_i + N_i(y) )
\cdot \nabla_y ( y_j + N_j(y) )\,dy.
\end{align}
It follows from \eqref{eq:aeffijiszero} and \eqref{eq:aeffwriting}
that $A^\eff$ is symmetric and positive semidefinite 
by the same properties of $A$.
In particular, by \eqref{eq:aeffwriting},
\begin{align*}
a^\eff_{11}
& = \int_{\square(0)} (\psi_1^2 A)(0,y) \nabla_y ( y_1 + N_1(y)) \cdot \nabla_y ( y_1 + N_1(y))   \,dy \\
& \ge \int_{\square(0)} \psi_1^2(0,y) | \nabla_y ( y_1 + N_1(y) )  |^2 \,dy.
\end{align*}
Suppose that $a_{11}^\eff = 0$.
Then by the last inequality, $\psi_1^2(0,y)\nabla_y (y_1 + N_1(y)) = 0$ a.e. in $\square(0)$,
which by the connectedness of $\square(0)$ and the positivity of $\psi_1^2$
implies that $y_1 + N_1(y)$ is constant,
which contradicts the periodicity of $N_1$ in $y_1$.
Therefore,
\begin{align*}
a_{11}^\eff > 0.
\end{align*}

We show that $\nu, v$ is an eigenpair to the effective equation \eqref{eq:limitproblem}.
By passing to the limit in the variational form \eqref{eq:varformdmuve} of the equation for $v_1^\ve$
(or equivalently reading off from \eqref{eq:aux77}),
using a test function $\varphi \in C^\infty_0(\rr^d)$,
one obtains
\begin{align*}
& \int_{\rr} a^\eff \partial_{z_1}v(z_1) \partial_{z_1} \varphi(z_1,0) \, dz_1  
+ \int_{\rr} \big( c^\eff + \frac{1}{2} \mu_1''(0) z_1^2 \big) v(z_1) \varphi(z_1,0) \,dz_1 \\
& \quad =
\nu \int_\rr v(z_1) \varphi(z_1,0) \,dz_1,
\end{align*}
for any $\varphi \in C^\infty_0(\rr^d)$.
This shows that $\nu, v$ is an eigenpair to the problem~\eqref{eq:limitproblem},
by the density of the traces of $C^\infty_0(\rr^d)$ in $C^\infty_0(\rr)$.
\end{proof}

Now we show that $\nu, v$ is necessarily the first eigenpair to the effective equation
and conclude that full $\ve$ sequence $\nu_1^\ve, v_1^\ve$ converges.

\begin{lemma}
The whole sequence $(\nu^\ve_1, \psi_1^\ve v_1^\ve)$ converges
to $(\nu_1, \psi_1(0,y)v_1)$, in $\mathbf{R}$ and weakly two-scale in $L^2(\rr^d, d\mu_\ve)$, respectively,
where $\nu_1, v_1$ is the first eigenpair of the limit problem \eqref{eq:limitproblem}.
\end{lemma}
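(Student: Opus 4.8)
The plan is to identify the subsequential limit produced by the previous lemma with the first eigenpair of \eqref{eq:limitproblem} and then remove the subsequence. Recall that along a subsequence $\nu_1^\ve\to\nu$ in $\rr$ and $\psi_1^\ve v_1^\ve\overset{2}{\rightharpoonup}\psi_1(0,y)v(z_1)$ weakly two-scale, with $(\nu,v)$ an eigenpair of \eqref{eq:limitproblem}; in particular $\nu\ge\nu_1$, the bottom of that spectrum. So it suffices to prove $\limsup_{\ve\to 0}\nu_1^\ve\le\nu_1$ and to pin down the limiting eigenfunction; the usual ``every subsequence has a further subsequence'' argument then promotes everything to the full family $\ve\to 0$.

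For the upper bound I would use the Rayleigh quotient for the weighted problem \eqref{eq:rescaled}--\eqref{eq:rescaled3},
\[
\nu_1^\ve=\min_{w}\ \frac{\displaystyle\int_{\widetilde{\Oe}}\widetilde{A^\ve}\nabla_z w\cdot\nabla_z w\,dz+\int_{\widetilde{\Oe}}\Big(C^\ve+\frac{\mu_1(\sqrt\ve z_1)-\mu_1(0)}{\ve}(\psi_1^\ve)^2\Big)w^2\,dz}{\displaystyle\int_{\widetilde{\Oe}}(\psi_1^\ve)^2 w^2\,dz},
\]
over $w\in H^1(\widetilde{\Oe},(\psi_1^\ve)^2,\widetilde{\Sigma_\ve})\setminus\{0\}$, and insert the corrector test function $w^\ve(z)=\phi(z_1)+\sqrt\ve\,N_1(z/\sqrt\ve)\,\phi'(z_1)$, where $\phi\in C_0^\infty(\rr)$ approximates the first eigenfunction $v_1$ of \eqref{eq:limitproblem} in energy (legitimate since $v_1$ decays exponentially as $|z_1|\to\infty$) and $N_1$ solves \eqref{eq:cellproblemN1}. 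For $\ve$ small the support of $\phi$ lies in $\ve^{-1/2}I$, so $w^\ve$ vanishes near the bases $\widetilde{\Sigma_\ve}$ and is admissible, the $O(\sqrt\ve)$ discrepancy between the cells $\square(\sqrt\ve z_1)$ and $\square(0)$ being absorbed by the $C^2$-regularity of $F$. Passing to the limit with $d\mu_\ve\rightharpoonup d\mu$ and using \eqref{eq:aeffwriting}, the numerator converges to $\int_\rr\big(a^\eff(\phi')^2+(c^\eff+\tfrac12\mu_1''(0)z_1^2)\phi^2\big)dz_1$ and the denominator to $\int_\rr\phi^2\,dz_1$, so $\nu_1^\ve$ is bounded above by the limiting Rayleigh quotient of $\phi$ plus $o(1)$; letting $\phi\to v_1$ gives $\limsup\nu_1^\ve\le\nu_1$. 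Together with $\nu\ge\nu_1$ this forces $\nu=\nu_1$ along the chosen subsequence, hence (subsequence argument) $\nu_1^\ve\to\nu_1$ for the full family.

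It remains to show $v=v_1$. First, $v\not\equiv 0$: the bound $\|\psi_1^\ve z_1 v_1^\ve\|_{L^2(\rr^d,d\mu_\ve)}\le C$ from Lemma~\ref{lm:v1veestimate} gives tightness in $z_1$ (no mass escapes to $|z_1|=\infty$), and the weighted gradient bound $\|\psi_1^\ve\nabla_z v_1^\ve\|_{L^2(d\mu_\ve)}\le C$ gives local compactness (a Rellich-type estimate after rescaling the transverse variable), so $\psi_1^\ve v_1^\ve$ converges \emph{strongly} two-scale; passing to the limit in the normalization \eqref{eq:normalization-v^eps} and using \eqref{eq:auxproblemnormalization} gives $\int_\rr v^2\,dz_1=1$. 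Next, $u_1^\ve>0$ and $\psi_1^\ve>0$ in the interior force $v_1^\ve>0$, hence $v\ge 0$; since the first eigenvalue of \eqref{eq:limitproblem} is simple ($\nu_1<\nu_2$), the nonnegative unit-norm eigenfunction $v$ equals $v_1$. Finally every subsequence of $(\nu_1^\ve,\psi_1^\ve v_1^\ve)$ has a further subsequence converging to $(\nu_1,\psi_1(0,y)v_1)$, so the whole family converges.

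I expect the main obstacle to be the passage to the limit in the normalization: one must rule out loss of $L^2$-mass in the two-scale limit so that $v\ne 0$, and this requires combining the moment bound (tightness at $|z_1|=\infty$) with the weighted gradient bound (local compactness, despite the degeneracy of the weight $(\psi_1^\ve)^2$ on the lateral boundary) inside the singular-measure two-scale framework. A secondary technical point is verifying that the corrector test function $w^\ve$ is genuinely admissible in $H^1(\widetilde{\Oe},(\psi_1^\ve)^2,\widetilde{\Sigma_\ve})$ near the degenerate lateral boundary, and that replacing $\square(\sqrt\ve z_1)$ by $\square(0)$ in the limiting cell integrals produces only $o(1)$ errors.
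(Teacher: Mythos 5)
Your proposal follows essentially the same route as the paper: bound $\nu_1^\ve$ from above by the limiting Rayleigh quotient using a first-order corrector built from $N_1$, combine this with the subsequential convergence to an eigenpair $(\nu,v)$ with $\nu\ge\nu_1$ to force $\nu=\nu_1$, and then use simplicity of $\nu_1$ together with the subsequence principle to upgrade to the full family. The only implementation difference is that the paper inserts $v_1$ itself multiplied by a cutoff $\phi^\ve(\sqrt\ve z_1)$ (so that the test function is exactly admissible in $H^1(\widetilde{\Omega_\ve},(\psi_1^\ve)^2,\widetilde{\Sigma_\ve})$), whereas you approximate $v_1$ by a compactly supported $\phi\in C_0^\infty(\mathbf R)$ and pass to the limit in $\phi\to v_1$ afterward; both are legitimate and deliver the same bound. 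Your extra discussion of why $v\ne 0$ (tightness from the moment bound plus local compactness from the weighted gradient bound, giving strong two-scale convergence and preserving the normalization) and of how positivity of $v_1^\ve$ pins down the sign is not spelled out in the paper's proof — it is implicitly deferred to the closing remark about strong convergence — but it addresses a real gap and is a worthwhile addition rather than a departure.
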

\begin{proof}
We show that $\nu, v$ is the first eigenvalue to the limit problem \eqref{eq:limitproblem},
and the whole sequences $\nu^\ve_1$, $v_1^\ve$ converge.

Let $\phi^\ve$ be a smooth cutoff for the interval $I$ such that
\begin{align*}
& \phi^\ve \in C^\infty(\overline{I}),\\
& \phi^\ve = 0 \text{ on } \partial I,\\
& 0 \le \phi^\ve \le 1 \text{ in } I, \\
& \phi^\ve(x_1) = 1 \text{ for } \mop{dist}(x_1,\partial I) > \ve, \\
& \ve |\partial_{x_1}\phi^\ve| \le 1.
\end{align*}
Say,
\begin{align*}
\phi^\ve(x_1) & = 
\begin{cases}
\mop{dist}(\frac{x_1}{\ve}, \partial I), & 0 \le \mop{dist}(x_1,\partial I) \le \ve,\\
1, & \text{otherwise.}
\end{cases}
\end{align*}
Then the function $\phi^\ve(\sqrt{\ve}z_1)$ is smooth and cuts off
the growing interval $\frac{1}{\sqrt{\ve}}I$ in a $\sqrt{\ve}$ neighborhood of its boundary,
with gradient satisfying the estimate
\begin{align*}
\big|\partial_{z_1}\big(\phi^\ve( \sqrt{\ve}z_1 )\big)\big| & \le \frac{1}{\sqrt{\ve}}.
\end{align*}

Let $\nu_1, v_1$ be the first eigenpair to the limit problem
\eqref{eq:limitproblem}.
Now we will consider the following test function in the 
variational principle for the eigenvalue $\nu^\ve$:
\begin{align*}
w_\ve(z) = \phi^\ve(\sqrt{\ve}z_1)
\Big( v_1(z_1) + \sqrt{\ve} N_1\big( \frac{z}{\sqrt{\ve}} \big)\partial_{z_1}v_1(z_1) \Big).
\end{align*}
By the definitions of $\phi^\ve$, 
$v_1$, $N_1$,
one has for any $\ve > 0$, $w^\ve \in H^1(\rr^d) \setminus \{ 0 \}$,
with vanishing trace on the end planes
$\big\{ z = (z_1,z') \in \rr^d : z_1 \in \partial \frac{1}{\sqrt{\ve}}I \big\}$.

Then for $\nu_1^\ve = \ve \lambda_1^\ve - \mu_1(0)/\ve$ one has 
from the variational principle, using the test function $w^\ve$,
\begin{align*}
\nu^\ve_1 & \le \frac{\int_{\rr^d} (\psi_1^2A)\big(\sqrt{\ve}z_1, \frac{z}{\sqrt{\ve}}\big) \nabla_z w_\ve(z) \cdot \nabla_z w_\ve(z) \,d\mu_\ve(z)}{\int_{\rr^d} \psi_1^2\big(\sqrt{\ve}z_1, \frac{z}{\sqrt{\ve}}\big) w_\ve^2(z) \,d\mu_\ve(z)} \\
& \quad + \frac{\int_{\rr^d} (\ve \psi_1 b + \psi_1 c + \ve^{-1}(\mu_1 - \mu_1(0))\psi_1^2)\big(\sqrt{\ve}z_1, \frac{z}{\sqrt{\ve}}\big) w_\ve^2(z) \,d\mu_\ve(z)}{\int_{\rr^d} \psi_1^2\big(\sqrt{\ve}z_1, \frac{z}{\sqrt{\ve}}\big) w_\ve^2(z) \,d\mu_\ve(z)}.
\end{align*}
One has with $z = (z_1,z')$,
\begin{align*}
\partial_{z_1} w_\ve(z) & =  
 ( \partial_{z_1}( \phi^\ve(\ve z_1) ) )( v_1(z_1) + \sqrt{\ve}N_1\big( \sqrt{\ve}z_1, \frac{z}{\sqrt{\ve}} \big) \partial_{z_1}v_1(z_1) ) \\
&\qquad  + \phi^\ve(\sqrt{\ve}z_1)\big( 
\big(1 + \partial_{y_1}N\big(\sqrt{\ve} z_1, \frac{z}{\sqrt{\ve}} \big)\big)\partial_{z_1}v_1(z_1)
\big),\\
\nabla_{z'} w_\ve(z) & = \partial_{z_1}v_1(z_1) \nabla_{y'}N_1\big( \frac{z}{\sqrt{\ve}} \big).
\end{align*}
The following estimates follows:
\begin{align*}
\int_{\rr^d} \psi_1^2\big(\sqrt{\ve}z_1, \frac{z}{\sqrt{\ve}}\big) w_\ve^2(z) \,d\mu_\ve(z) 
& = \int_{\rr} v_1^2(z_1) \,dz_1 + o(1),
\end{align*}
and
\begin{align*}
& \frac{\int_{\rr^d} (\psi_1^2A)\big(\sqrt{\ve}z_1, \frac{z}{\sqrt{\ve}}\big) \nabla_z w_\ve(z) \cdot \nabla_z w_\ve(z) \,d\mu_\ve(z)}{\int_{\rr^d} \psi_1^2\big(\sqrt{\ve}z_1, \frac{z}{\sqrt{\ve}}\big) w_\ve^2(z) \,d\mu_\ve(z)} \\
& \quad \le 
\int_{\rr} a^\eff (\partial_{z_1}v_1(z_1))^2 \,dz_1 + o(1),
\end{align*}
and
\begin{align*}
& \frac{\int_{\rr^d} (\ve \psi_1 b + \psi_1 c + \ve^{-1}(\mu_1 - \mu_1(0))\psi_1^2)\big(\sqrt{\ve}z_1, \frac{z}{\sqrt{\ve}}\big) w_\ve^2(z) \,d\mu_\ve(z)}{\int_{\rr^d} \psi_1^2\big(\sqrt{\ve}z_1, \frac{z}{\sqrt{\ve}}\big) w_\ve^2(z) \,d\mu_\ve(z)} \\
& \quad = 
\int_{\rr} \big(c^\eff + \frac{1}{2}z_1^2 \mu_1''(0)\big)v_1^2(z_1) \,dz_1 + o(1),
\end{align*}
as $\ve$ tends to zero.
By the variational principle (or directly from the variational form of the equation with $v_1$ as test function) for the first eigenvalue $\nu_1$ for the limit problem \eqref{eq:limitproblem},
\begin{align*}
\nu_1 & = \frac{\int_{\rr} a^\eff (\partial_{z_1}v_1(z_1))^2 \,dz_1 + \int_{\rr} \big(c^\eff + \frac{1}{2}z_1^2 \mu_1''(0)\big)v_1^2(z_1) \,dz_1}{\int_{\rr} v_1^2(z_1) \,dz_1}.
\end{align*}
One gets the estimate, along a subsequence,
\begin{align*}
\nu_1^\ve & \le \nu_1 + o(1),  
\end{align*}
as $\ve$ tends to zero.
One concludes that for the whole $\ve$ sequence,
\begin{align*}
\lim_{\ve \to 0} \nu_1^\ve = \nu = \nu_1.
\end{align*}
In terms of $\lambda_1^\ve$ this estimate reads,
\begin{align*}
\lambda_1^\ve = \frac{\mu_1(0)}{\ve^2} + \frac{\nu_1}{\ve} + o\big( \frac{1}{\ve} \big),
\end{align*}
as $\ve$ tends to zero.

By the simplicity of the first eigenvalue $\nu_1$ to the limit problem~\eqref{eq:limitproblem},
the whole sequence $v_1^\ve$ converges to $v(z_1) = v_1(z_1)$, the first eigenfunction
to the limit problem~\eqref{eq:limitproblem}.
\end{proof}

Using the fact that the limit of $\nu_1^\ve, v_1^\ve$ is the first eigenpair to the 
effective equation \eqref{eq:limitproblem}, we derive the following 
a priori estimate for the second eigenvalue $\nu_2^\ve$ to the problem
\eqref{eq:rescaled}.

\begin{lemma}
Let $\nu_2^\ve$ be the second eigenvalue to the problem \eqref{eq:rescaled}.
Then
\begin{align*}
\nu_2^\ve & \le \nu_2 + o(1),
\end{align*}
as $\ve$ tends to zero.
\end{lemma}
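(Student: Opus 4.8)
The plan is to estimate $\nu_2^\ve$ from above by the Courant--Fischer min--max principle, using a two-dimensional trial subspace built, as in the previous lemma, from the first two eigenfunctions $v_1,v_2$ of the limit problem \eqref{eq:limitproblem}, corrected by $N_1$ and cut off near the ends. For $k=1,2$ I would set
\begin{align*}
w_k^\ve(z) & = \phi^\ve(\sqrt{\ve}z_1)\Big( v_k(z_1) + \sqrt{\ve}\, N_1\big( \tfrac{z}{\sqrt{\ve}} \big)\partial_{z_1}v_k(z_1) \Big),
\end{align*}
with $\phi^\ve$ the cutoff from the proof of the preceding lemma; exactly as there, $w_k^\ve \in H^1(\widetilde{\Oe}, (\psi_1^\ve)^2, \widetilde{\Sigma_\ve})\setminus\{0\}$, the weight $(\psi_1^\ve)^2$ forcing the vanishing on the lateral boundary and $\phi^\ve$ the vanishing on $\widetilde{\Sigma_\ve}$. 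Writing $V^\ve = \operatorname{span}\{w_1^\ve,w_2^\ve\}$ and letting $R^\ve$ denote the Rayleigh quotient of \eqref{eq:rescaled} (numerator the quadratic form on the left of \eqref{eq:weak-rescaled}, denominator $\int_{\rr^d}(\psi_1^\ve)^2(\cdot)^2\,d\mu_\ve$), the min--max principle gives $\nu_2^\ve \le \max_{w\in V^\ve\setminus\{0\}} R^\ve(w)$ as soon as $\dim V^\ve = 2$.

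The next step is to pass to the limit in the $2\times2$ Gram matrices of numerator and denominator of $R^\ve$ on $V^\ve$, that is, to identify the limits of
\begin{align*}
M^\ve_{jk} & = \int_{\rr^d}\psi_1^2\big(\sqrt{\ve}z_1,\tfrac{z}{\sqrt{\ve}}\big) w_j^\ve w_k^\ve\,d\mu_\ve, \\
A^\ve_{jk} & = \int_{\rr^d}(\psi_1^2 A)\big(\sqrt{\ve}z_1,\tfrac{z}{\sqrt{\ve}}\big)\nabla_z w_j^\ve\cdot\nabla_z w_k^\ve\,d\mu_\ve \\
& \quad + \int_{\rr^d}\big(\ve\psi_1 b + \psi_1 c + \tfrac{\mu_1-\mu_1(0)}{\ve}\psi_1^2\big)\big(\sqrt{\ve}z_1,\tfrac{z}{\sqrt{\ve}}\big) w_j^\ve w_k^\ve\,d\mu_\ve,
\end{align*}
for $j,k\in\{1,2\}$. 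For $j=k$ these are exactly the integrals evaluated in the proof of the preceding lemma; the off-diagonal case is handled the same way, using the explicit expressions for $\nabla_z w_k^\ve$, the weak$^\ast$ convergence of the rescaled measures (Lemma~\ref{lm:conv-measures}), the cell problem \eqref{eq:weakNk} defining $N_1$, the identity $a^\eff=a^\eff_{11}$, and the Taylor expansion $\tfrac{\mu_1(\sqrt{\ve}z_1)-\mu_1(0)}{\ve}=\tfrac12\mu_1''(0)z_1^2+o(1)$. This would give $M^\ve_{jk}\to\int_\rr v_j v_k\,dz_1=\delta_{jk}$ and
\begin{align*}
A^\ve_{jk} & \to B_{jk} := \int_\rr a^\eff\,\partial_{z_1}v_j\,\partial_{z_1}v_k\,dz_1 + \int_\rr\big(c^\eff+\tfrac12\mu_1''(0)z_1^2\big)v_j v_k\,dz_1,
\end{align*}
the matrix of the limit quadratic form on $\operatorname{span}\{v_1,v_2\}$.

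To conclude, since $v_1,v_2$ are the $L^2(\rr)$-orthonormal first two eigenfunctions of \eqref{eq:limitproblem}, the matrix $B$ is diagonal with entries $\nu_1,\nu_2$, so the largest generalized eigenvalue of the pair $(B,\mathrm{Id})$ equals $\nu_2$. Because $M^\ve\to\mathrm{Id}$ (invertible, hence $\dim V^\ve=2$ for all small $\ve$) and $A^\ve\to B$, and the largest root $t$ of $\det(A^\ve-tM^\ve)=0$ depends continuously on the matrix entries, one gets $\max_{w\in V^\ve\setminus\{0\}}R^\ve(w)=\nu_2+o(1)$, and therefore $\nu_2^\ve\le\nu_2+o(1)$. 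The main obstacle is the bookkeeping in the middle step: one must check that the off-diagonal integrals in $M^\ve$ and $A^\ve$ — in particular the contributions of the corrector $\sqrt{\ve}N_1\partial_{z_1}v_k$ and of the boundary layer produced by $\phi^\ve$ — have the claimed limits, and that the resulting convergence is uniform over normalized coefficient vectors $(\alpha_1,\alpha_2)$, which is automatic once $M^\ve\to\mathrm{Id}$ and $A^\ve\to B$ entrywise. All of this reduces to the diagonal estimates already established together with the $H^1(\square(0),\psi_1^2(0,y))$-bound on $N_1$, so no essentially new argument beyond the previous lemma is needed.
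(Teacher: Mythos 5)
Your proof is correct, but it takes a genuinely different route from the paper. The paper works with the variational characterization $\nu_2^\ve = \min\{R^\ve(w) : w \perp v_1^\ve\}$: it first observes that the corrected trial function $w_2^\ve$ is asymptotically orthogonal to the \emph{actual} first eigenfunction $v_1^\ve$ (because $v_1^\ve$ converges to $v_1 \perp v_2$ by the preceding lemma), then Gram--Schmidt projects $w_2^\ve$ onto $(v_1^\ve)^\perp$ and uses the result as a test function; since the projection coefficient is $o(1)$ this only perturbs the Rayleigh quotient by $o(1)$. You instead invoke the Courant--Fischer min--max principle over a two-dimensional trial subspace $V^\ve = \operatorname{span}\{w_1^\ve, w_2^\ve\}$ built purely from the limit eigenfunctions, and reduce the problem to convergence of the $2\times 2$ Gram matrices $A^\ve \to \operatorname{diag}(\nu_1,\nu_2)$, $M^\ve \to \mathrm{Id}$. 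What your approach buys is that it never references the $\ve$-eigenfunction $v_1^\ve$ in the construction of the trial space, only its consequences through the previous lemma (to assure the diagonal limits of the Gram matrices are what you claim). It also scales transparently to the $k$th eigenvalue with a $k$-dimensional trial subspace and no explicit orthogonalization step. The cost is the extra bookkeeping for the off-diagonal entries, which you correctly observe is of the same nature as the diagonal estimates already established, plus the (easy) verification of linear independence via $M^\ve \to \mathrm{Id}$. Both arguments are sound; the paper's is slightly more economical for this single step, while yours is cleaner as a template for the full inductive spectral convergence that follows.
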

\begin{proof}
One notes that $v_1^\ve$ is almost orthogonal to $v_2(z_1)$ as $\ve$ tends to zero because
$v_1^\ve$ converges to $v_1(z_1)$ which is orthogonal to $v_2(z_1)$.
Let
\begin{align*}
w_2^\ve & = \phi^\ve(\sqrt{\ve}z_1)
\Big( v_2(z_1) + \sqrt{\ve} N_1\big( \frac{z}{\sqrt{\ve}} \big)\partial_{z_1}v_2(z_1) \Big).
\end{align*}
Then $v_1^\ve$ is almost orthogonal to $w_2^\ve$.
One may therefore use
\begin{align*}
w_2^\ve(z) - \frac{(w_2^\ve(z),v_1^\ve)}{(v_1^\ve,v_1^\ve)}v_1^\ve
\end{align*}
as a test function in the variational principle for $\nu_2^\ve$.
\end{proof}

%


\begin{lemma}
Suppose that for all $i \le k$,
\begin{align*}
\lim_{\ve \to 0} \psi_1\big( \sqrt{\ve}z_1, \frac{z}{\sqrt{\ve}} \big) v_i^\ve = v_i \quad \text{ strongly in } L^2(\rr^d, d\mu_\ve).
\end{align*}
Let $m > k$.
Then $v_m^\ve$ is asymptotically orthogonal to $v_i$ for all $i \le k$:
\begin{align*}
\lim_{\ve \to 0}\int_{\rr^d} \psi_1^2\big( \sqrt{\ve}z_1, \frac{z}{\sqrt{\ve}} \big)
v^\ve_{m}(z) v_i(z_1) \,d\mu_\ve(z)
& = 0,
\end{align*}
and $v_m$ is asymptotically orthogonal to $v_i^\ve$ for all $i \le k$:
\begin{align*}
\lim_{\ve \to 0}\int_{\rr^d} \psi_1^2\big( \sqrt{\ve}z_1, \frac{z}{\sqrt{\ve}} \big)
v_{m}(z_1) v^\ve_i(z) \,d\mu_\ve(z)
& = 0.
\end{align*}
\end{lemma}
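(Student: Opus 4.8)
The plan is to exploit the exact orthogonality of the rescaled eigenfunctions $v_i^\ve$ in the weighted Lebesgue space, together with the assumed strong convergence of $\psi_1^\ve v_i^\ve$ for $i\le k$, and then to control the resulting error terms by Cauchy--Schwarz. Throughout I write $\psi_1^\ve(z)=\psi_1(\sqrt\ve z_1,z/\sqrt\ve)$ and work with the measure $d\mu_\ve$ carrying the cross-section weight $|\square(\sqrt\ve z_1)|^{-1}$, for which the normalization \eqref{eq:normalization-v^eps} says exactly that $(v_i^\ve)_i$ is an orthonormal system in $L^2(\rr^d,(\psi_1^\ve)^2\,d\mu_\ve)$; in particular $\int_{\rr^d}(\psi_1^\ve)^2 v_m^\ve v_i^\ve\,d\mu_\ve=0$ for every $m>k\ge i$, and $\|\psi_1^\ve v_m^\ve\|_{L^2(\rr^d,d\mu_\ve)}=1$. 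Switching to the constant weight $|\square(0)|^{-1}$ of \eqref{eq:measure} changes nothing, since the two measures share the weak$^\ast$ limit $dz_1\times d\delta(z')$.

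The key preliminary step is to observe that, besides the hypothesis $\psi_1^\ve v_i^\ve\to\psi_1(0,y)v_i(z_1)$ strongly two-scale, the frozen sequence $\psi_1^\ve v_i$ --- in which only the coefficient $\psi_1$ oscillates while the $i$-th limit eigenfunction $v_i=v_i(z_1)$ is fixed --- also converges strongly two-scale in $L^2(\rr^d,d\mu_\ve)$ to the same limit $\psi_1(0,y)v_i(z_1)$. This is because $\psi_1$ is an admissible two-scale coefficient (it is $C^2$ in $x_1$, bounded and $1$-periodic in $y_1$, and, by interior elliptic regularity for \eqref{eq:auxproblem}, smooth in $y'$ inside $\square(x_1)$), while $v_i$, being an eigenfunction of the harmonic-oscillator problem \eqref{eq:limitproblem}, is smooth with super-exponential decay and hence approximable in $L^2(\rr)$ by $C^\infty_0(\rr)$. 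By linearity of strong two-scale convergence the difference $r_i^\ve:=\psi_1^\ve(v_i^\ve-v_i)$ then converges strongly two-scale to $0$; testing this convergence against $r_i^\ve$ itself (which is bounded in $L^2(\rr^d,d\mu_\ve)$ and, being strongly two-scale null, also weakly two-scale null) gives $\|r_i^\ve\|_{L^2(\rr^d,d\mu_\ve)}\to0$ for each $i\le k$.

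Granting this, both assertions follow by a one-line splitting. For the first, write $(\psi_1^\ve)^2 v_m^\ve v_i=(\psi_1^\ve)^2 v_m^\ve v_i^\ve-(\psi_1^\ve v_m^\ve)\,r_i^\ve$ and integrate against $d\mu_\ve$: the first term is $0$ by orthogonality, and by Cauchy--Schwarz in $L^2(\rr^d,d\mu_\ve)$ the second is bounded by $\|\psi_1^\ve v_m^\ve\|_{L^2(d\mu_\ve)}\|r_i^\ve\|_{L^2(d\mu_\ve)}=1\cdot o(1)\to0$. For the second, with $v_m=v_m(z_1)$ the $m$-th limit eigenfunction, write $(\psi_1^\ve)^2 v_m v_i^\ve=(\psi_1^\ve)^2 v_m v_i+(\psi_1^\ve v_m)\,r_i^\ve$. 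The second term is again $O(\|r_i^\ve\|_{L^2(d\mu_\ve)})=o(1)$, since $\psi_1^\ve v_m$ is bounded in $L^2(\rr^d,d\mu_\ve)$ by boundedness of $\psi_1$ and $v_m\in L^2(\rr)$; and the first term --- the integral of the oscillating coefficient $(\psi_1^2)^\ve$ against the fixed, rapidly decaying function $v_m(z_1)v_i(z_1)$ --- converges, by the mean-value/Riemann-sum argument of Lemma \ref{lm:conv-measures} (using that the $y_1$-average of $\int_{Q(x_1,y_1)}\psi_1^2\,dy'$ equals $\int_{\square(x_1)}\psi_1^2\,dy=1$ by \eqref{eq:auxproblemnormalization}, and that $\sqrt\ve z_1\to0$ on the support of $v_m v_i$), to a positive multiple of $\int_\rr v_m(z_1)v_i(z_1)\,dz_1$, which is $0$ because $m\neq i$ by \eqref{eq:normalization-v}. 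This proves the lemma.

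The main obstacle is the preliminary step: extracting from the hypothesis ``$\psi_1^\ve v_i^\ve\to v_i$ strongly in $L^2(\rr^d,d\mu_\ve)$'' --- which we read as strong two-scale convergence to $\psi_1(0,y)v_i(z_1)$ --- the quantitative statement $\|\psi_1^\ve(v_i^\ve-v_i)\|_{L^2(\rr^d,d\mu_\ve)}\to0$. This rests on the admissibility of $\psi_1$ as a two-scale test coefficient, so that freezing $v_i$ does not destroy strong convergence, and on the elementary fact that a sequence converging strongly two-scale to zero has $L^2(d\mu_\ve)$-norm tending to zero; both belong to the two-scale calculus recalled above and in \cite{pettersson2017two,zhikov2000extension}. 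Everything else --- orthogonality, Cauchy--Schwarz, and the Riemann-sum limit --- is routine.
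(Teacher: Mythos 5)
Your proof is correct and follows essentially the same route as the paper's: insert the eigenfunction $v_i^\ve$ via the exact orthogonality \eqref{eq:normalization-v^eps}, and kill the remainder $\int (\psi_1^\ve v_m^\ve)\,\psi_1^\ve(v_i-v_i^\ve)\,d\mu_\ve$ by Cauchy--Schwarz together with the assumed strong convergence of $\psi_1^\ve v_i^\ve$. The one place you are more careful than the paper is in making explicit (via the admissibility of $\psi_1$ as a two-scale coefficient) that the hypothesis yields $\|\psi_1^\ve(v_i^\ve-v_i)\|_{L^2(d\mu_\ve)}\to0$ --- which is what Cauchy--Schwarz actually needs, rather than the literal reading $\|\psi_1^\ve v_i^\ve-v_i\|\to0$ --- and in spelling out, via the Riemann-sum argument of Lemma~\ref{lm:conv-measures} and the normalization \eqref{eq:auxproblemnormalization}, why $\int(\psi_1^\ve)^2 v_m v_i\,d\mu_\ve\to\int_\rr v_m v_i\,dz_1=0$, where the paper merely invokes ``strong convergence and orthogonality''.
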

\begin{proof}
Let $m > k$.
Then by the orthonormalization \eqref{eq:normalization-v^eps} of the eigenfunctions $v_j^\ve$,
\begin{align*}
& \int_{\rr^d} \psi_1^2\big( \sqrt{\ve}z_1, \frac{z}{\sqrt{\ve}} \big)
v^\ve_{m}(z) v_i(z_1) \,d\mu_\ve(z) \\
& \quad = 
\sum_{i=1}^k \int_{\rr^d} \psi_1^2\big( \sqrt{\ve}z_1, \frac{z}{\sqrt{\ve}} \big)
v^\ve_{m}(z) v^\ve_i(z) \,d\mu_\ve(z) \\
& \qquad +
\sum_{i=1}^k \int_{\rr^d} \psi_1^2\big( \sqrt{\ve}z_1, \frac{z}{\sqrt{\ve}} \big)
v^\ve_{m}(z) (v_i(z_1) - v_i^\ve(z))  \,d\mu_\ve(z) \\
& \quad = 
\sum_{i=1}^k \int_{\rr^d} \psi_1^2\big( \sqrt{\ve}z_1, \frac{z}{\sqrt{\ve}} \big)
v^\ve_{m}(z) (v_i(z_1) - v_i^\ve(z))  \,d\mu_\ve(z).
\end{align*}
By the normalization of $v_m^\ve$, $\psi_1\big(\sqrt{\ve}z_1,\frac{z}{\sqrt{\ve}}\big)v_m^\ve$ is bounded in $L^2(\rr^d, d\mu_\ve)$.
The first asymptotic orthogonality follows from the convergence of $\psi_1\big(\sqrt{\ve}z_1,\frac{z}{\sqrt{\ve}}\big) v_i^\ve $ to $v_i$ in $L^2(\rr^d, d\mu_\ve)$ because
\begin{align*}
\int_{\rr^d} \psi_1^2\big( \sqrt{\ve}z_1, \frac{z}{\sqrt{\ve}} \big)
v^\ve_{m}(z) (v_i(z_1) - v_i^\ve(z))  \,d\mu_\ve(z) = o(1),
\end{align*}
as $\ve$ tends to zero, for any $i \le k$.

The second asserted asymptotic orthogonality follows from the strong convergence and the orthogonality of $v_m$
to $v_i$ for $i \neq m$. 
\end{proof}

We approach the convergence of spectrum by considering the second eigenvalue
for illustration.

\begin{lemma}
$\nu_2^\ve \to \nu_2$, as $\ve \to 0$.
\end{lemma}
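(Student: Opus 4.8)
The plan is to match the already-established upper bound $\nu_2^\ve \le \nu_2 + o(1)$ with the lower bound $\liminf_{\ve\to 0}\nu_2^\ve \ge \nu_2$; together with the simplicity of the second eigenvalue of \eqref{eq:limitproblem} this will also yield strong two-scale convergence of $\psi_1^\ve v_2^\ve$ to $v_2$. First I would observe that $C \le \nu_1^\ve < \nu_2^\ve \le \nu_2 + o(1)$, so $\nu_2^\ve$ is bounded; the proof of Lemma~\ref{lm:v1veestimate} used only the boundedness of the corresponding eigenvalue and hypothesis~\eqref{eq:H}, hence it applies verbatim to the first eigenfunction $v_2^\ve$ of \eqref{eq:rescaled} and gives $\|\psi_1^\ve\nabla_z v_2^\ve\|_{L^2(\rr^d,d\mu_\ve)} + \|\psi_1^\ve v_2^\ve\|_{L^2(\rr^d,d\mu_\ve)} + \|\psi_1^\ve z_1 v_2^\ve\|_{L^2(\rr^d,d\mu_\ve)} \le C$. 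Fixing an arbitrary subsequence along which $\nu_2^\ve \to \bar\nu$ (so $\bar\nu \le \nu_2$), I would then, by the two-scale compactness for sequences with bounded gradient, pass to a further subsequence so that $\psi_1^\ve v_2^\ve \overset{2}{\rightharpoonup}\psi_1(0,y)\tilde v(z_1)$ and $\psi_1^\ve\nabla_z v_2^\ve \overset{2}{\rightharpoonup}\psi_1(0,y)(\nabla^{dz_1\times d\delta(z')}\tilde v + \nabla_y q)$ with $\tilde v \in L^2(\rr)$, $q \in L^2(\rr,H^1(\square(0)))$. The passage-to-the-limit computation carried out for the first eigenpair never used minimality, so it applies word for word and shows that $(\bar\nu,\tilde v)$ is an eigenpair of \eqref{eq:limitproblem}.

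Next I would check that $\tilde v$ is orthogonal to $v_1$ in $L^2(\rr)$. On the one hand, since $\psi_1^\ve v_1^\ve \to v_1$ strongly two-scale, the asymptotic orthogonality lemma (with $k=1$, $m=2$) gives $\int_{\rr^d}(\psi_1^\ve)^2 v_2^\ve v_1\,d\mu_\ve \to 0$. On the other hand, writing this integral as the pairing of the weakly two-scale convergent $\psi_1^\ve v_2^\ve$ with the strongly two-scale convergent $\psi_1^\ve v_1(z_1)$ (legitimate after approximating $v_1$ by compactly supported functions, using its exponential decay), it converges to a nonzero multiple of $\int_\rr \tilde v(z_1)v_1(z_1)\,dz_1$, where I use $\int_{\square(0)}\psi_1^2(0,y)\,dy = 1$. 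Hence $\tilde v \perp v_1$.

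The decisive step is to exclude $\tilde v \equiv 0$, i.e.\ to show that no mass escapes in the two-scale limit. The bound on $\|\psi_1^\ve z_1 v_2^\ve\|_{L^2(d\mu_\ve)}$ provides tightness in the longitudinal variable, $\int_{\{|z_1|>R\}}(\psi_1^\ve)^2(v_2^\ve)^2\,d\mu_\ve \le C R^{-2}$ uniformly in $\ve$, so for $R$ large a definite fraction of the normalized mass of $\psi_1^\ve v_2^\ve$ stays in $\{|z_1|\le R\}$. On such a bounded longitudinal region the gradient bound, via the strong two-scale compactness for sequences with bounded gradient, upgrades the weak two-scale convergence of $\psi_1^\ve v_2^\ve$ to strong convergence; therefore $\|\psi_1(0,\cdot)\tilde v\|^2_{L^2(\{|z_1|\le R\}\times\square(0))} = \lim_{\ve}\int_{\{|z_1|\le R\}}(\psi_1^\ve)^2(v_2^\ve)^2\,d\mu_\ve > 0$, so $\tilde v \ne 0$; the same argument in fact gives $\psi_1^\ve v_2^\ve \to \psi_1(0,y)\tilde v$ strongly two-scale on all of $\rr^d$, and hence $\|\tilde v\|_{L^2(\rr)} = 1$.

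Finally, $\tilde v$ is a nonzero eigenfunction of \eqref{eq:limitproblem} orthogonal to $v_1$; since the part of the spectrum of \eqref{eq:limitproblem} strictly below $\nu_2$ consists only of the simple eigenvalue $\nu_1$ with eigenspace $\mathrm{span}(v_1)$, necessarily $\bar\nu \ge \nu_2$, and combined with $\bar\nu \le \nu_2$ this gives $\bar\nu = \nu_2$. As the subsequence was arbitrary, $\nu_2^\ve \to \nu_2$. By the simplicity of $\nu_2$, $\tilde v = \pm v_2$, and the strong two-scale convergence extends to the full sequence, so $\psi_1^\ve v_2^\ve \to v_2$ strongly two-scale in $L^2(\rr^d,d\mu_\ve)$. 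I expect the main obstacle to be the non-vanishing of mass, equivalently the promotion of weak to strong two-scale convergence: the confining quadratic potential controls escape to infinity, but the local compactness on bounded longitudinal regions in the degenerate $\psi_1^2$-weighted thin-domain geometry is the delicate ingredient, and the whole scheme then proceeds by induction on the eigenvalue index exactly as indicated here for $\nu_2$.
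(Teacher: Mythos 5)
Your proof is correct and follows the same strategy the paper uses, which is stated very tersely: match the already-established upper bound $\nu_2^\ve \le \nu_2 + o(1)$ with a lower bound obtained by compactness, identify the two-scale limit along a subsequence as an eigenpair of~\eqref{eq:limitproblem}, show orthogonality to $v_1$, and use simplicity of $\nu_2$ to close the argument for the full sequence.

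Where you add genuine content is the step excluding $\tilde v \equiv 0$. The paper addresses the possibility of mass escaping to infinity only in a concluding remark (asserting that exponential decay of the limit eigenfunction compensates for the unbounded cylinder), whereas your tightness estimate $\int_{\{|z_1|>R\}}(\psi_1^\ve)^2(v_2^\ve)^2\,d\mu_\ve \le C R^{-2}$, read off from the uniform bound on $\|\psi_1^\ve z_1 v_2^\ve\|_{L^2(\rr^d,d\mu_\ve)}$ in Lemma~\ref{lm:v1veestimate}, is a concrete and quantitative way to rule out loss of mass at $|z_1|\to\infty$. You also correctly note that this same upgrade from weak to strong two-scale convergence is needed to even invoke the asymptotic orthogonality lemma (whose hypothesis is strong convergence of $\psi_1^\ve v_1^\ve$), so your induction closes a small circularity that the paper leaves implicit. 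The one point you rightly flag as delicate — that a uniform gradient bound yields strong two-scale compactness on bounded longitudinal slabs in the degenerate $\psi_1^2$-weighted thin-domain geometry — is the place where both your proof and the paper's ultimately rely on the singular-measure two-scale compactness machinery cited from Zhikov's work and \cite{pettersson2017two}; you should state the version of that strong compactness theorem you are invoking, since it is not spelled out in the paper either.
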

\begin{proof}
The second eigenfunction $v_2^\ve$ is almost orthogonal to $v_1^\ve$
when $\ve$ small.
Then $\nu_2^\ve, v_2^\ve$ converges along a subsequence (using estimate in previous lemma for a priori estimates for two-scale convergence) to an eigenpair $\nu, v$ such that
$v$ is almost orthogonal to $v_1$.
By a previous lemma we must have $\nu = \nu_2$,
and thus $v = v_2$, using that all eigenvalues of the limit problem are simple.
\end{proof}

\begin{lemma}
Convergence of the spectrum $(\nu_i^\ve, v_i^\ve)$.
\end{lemma}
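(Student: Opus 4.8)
The plan is to prove, by induction on $i$, that $\nu_i^\ve \to \nu_i$ in $\rr$ and that $\psi_1^\ve v_i^\ve$ (with $\psi_1^\ve = \psi_1(\sqrt{\ve}z_1,\tfrac{z}{\sqrt{\ve}})$) converges strongly two-scale in $L^2(\rr^d,d\mu_\ve)$ to $\psi_1(0,y)v_i(z_1)$, where $(\nu_i,v_i)$ is the $i$th eigenpair of the limit problem \eqref{eq:limitproblem}; the cases $i=1,2$ are the content of the preceding lemmas. Assume the statement for all $i\le k$ and prove it for $i=k+1$. \textbf{Upper bound.} Take the trial function
\[
w^\ve(z) = \phi^\ve(\sqrt{\ve}z_1)\Big( v_{k+1}(z_1) + \sqrt{\ve}\,N_1\big(\tfrac{z}{\sqrt{\ve}}\big)\partial_{z_1}v_{k+1}(z_1) \Big),
\]
with $\phi^\ve$ the boundary cutoff introduced above, and let $\widehat w^\ve$ be its Gram--Schmidt orthogonalization against $v_1^\ve,\dots,v_k^\ve$ in the inner product of \eqref{eq:normalization-v^eps}, with respect to which the $v_i^\ve$ form an orthonormal family. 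By the asymptotic orthogonality lemma, the projections of $w^\ve$ onto the $v_i^\ve$ converge to the $L^2(\rr)$ inner products $\langle v_{k+1},v_i\rangle = 0$ ($i\le k$), so $\widehat w^\ve = w^\ve + o(1)$ in $H^1$ and, for small $\ve$, $\widehat w^\ve$ is an admissible competitor in the Courant--Fischer characterization of $\nu_{k+1}^\ve$. Repeating the Rayleigh-quotient computation already carried out for the first eigenvalue — strong two-scale convergence of $(\psi_1^2A)\big(\sqrt{\ve}z_1,\tfrac{z}{\sqrt{\ve}}\big)\nabla_z\varphi$, the Taylor expansion $\ve^{-1}(\mu_1(\sqrt{\ve}z_1)-\mu_1(0)) = \tfrac12\mu_1''(0)z_1^2 + o(1)$, and the defining equation of $N_1$ — yields
\[
\limsup_{\ve\to 0}\nu_{k+1}^\ve \le \frac{\int_\rr a^\eff(\partial_{z_1}v_{k+1})^2\,dz_1 + \int_\rr\big(c^\eff + \tfrac12\mu_1''(0)z_1^2\big)v_{k+1}^2\,dz_1}{\int_\rr v_{k+1}^2\,dz_1} = \nu_{k+1}.
\]

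\textbf{Compactness and passage to the limit.} From $\nu_{k+1}^\ve = O(1)$ the a priori bounds of Lemma \ref{lm:v1veestimate} hold verbatim for $v_{k+1}^\ve$, so $\psi_1^\ve v_{k+1}^\ve$, $\psi_1^\ve\nabla_z v_{k+1}^\ve$ and $\psi_1^\ve z_1 v_{k+1}^\ve$ are bounded in $L^2(\rr^d,d\mu_\ve)$. By the two-scale compactness principle (Lemma \ref{tm:twoscaleprinciple}) and exactly the passage-to-the-limit argument run for the first eigenpair, along a subsequence $\nu_{k+1}^\ve\to\nu_\ast$ and $\psi_1^\ve v_{k+1}^\ve \overset{2}{\rightharpoonup} \psi_1(0,y)v_\ast(z_1)$ with $(\nu_\ast,v_\ast)$ an eigenpair of \eqref{eq:limitproblem}.

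\textbf{Identification and strong convergence.} By the asymptotic orthogonality lemma $v_\ast\perp v_i$ in $L^2(\rr)$ for all $i\le k$, so the min--max principle for \eqref{eq:limitproblem} gives $\nu_\ast\ge\nu_{k+1}$; combined with the upper bound this forces $\nu_\ast=\nu_{k+1}$, and the simplicity of the eigenvalues of \eqref{eq:limitproblem} forces $v_\ast=v_{k+1}$. Since the limit is independent of the subsequence, the whole sequences converge. Finally, weak two-scale convergence together with convergence of norms — the $d\mu_\ve$-norm in \eqref{eq:normalization-v^eps} being identically $1$ and its limit being $\int_\rr v_{k+1}^2\,dz_1 = 1$ by \eqref{eq:auxproblemnormalization}, \eqref{eq:normalization-v} — upgrades the convergence to strong two-scale in $L^2(\rr^d,d\mu_\ve)$, which closes the induction. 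Undoing the change of variables \eqref{eq:newvariables}, \eqref{eq:nuvedef} then produces assertions (i) and (ii) of Theorem \ref{tm:one} with $\lambda_i^0=\nu_i$.

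\textbf{Main obstacle.} The delicate point is the upper-bound step: one must verify that the Gram--Schmidt correction against $v_1^\ve,\dots,v_k^\ve$ perturbs both the numerator and the denominator of the Rayleigh quotient only by $o(1)$ — this uses the asymptotic orthogonality of $w^\ve$ to the lower eigenfunctions together with the uniform two-sided control of the $d\mu_\ve$-norms of the $v_i^\ve$ — and that the squeeze $\nu_{k+1}\le\nu_\ast\le\nu_{k+1}$ is assembled correctly, the left inequality from the limit variational principle applied to $v_\ast$ and the right one from the $\ve$-level variational principle applied to $\widehat w^\ve$; everything else is a repetition of the first-eigenpair analysis.
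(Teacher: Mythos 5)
Your proof follows the same inductive scheme the paper sketches: construct the corrector-type trial function $w_{k+1}^\ve$ with boundary cutoff, Gram--Schmidt it against $v_1^\ve,\dots,v_k^\ve$ using the asymptotic orthogonality lemma to control the corrections, deduce the upper bound $\nu_{k+1}^\ve \le \nu_{k+1}+o(1)$ from the Rayleigh quotient, and then combine two-scale compactness with simplicity of the limit spectrum and the induction hypothesis to identify the limit and upgrade to full-sequence strong convergence. Your write-up is more explicit than the paper's terse outline (in particular you spell out the lower bound $\nu_\ast\ge\nu_{k+1}$ via orthogonality of the limit eigenfunction to $v_1,\dots,v_k$, and the weak-plus-norms argument for strong two-scale convergence, which the paper only addresses in the remark after the lemmas), but it is the same route; one small imprecision is that the smallness of the Gram--Schmidt correction should be measured in the weighted energy norm with weight $(\psi_1^\ve)^2$, not the classical $H^1$ norm.
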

\begin{proof}
We know that $(\nu_1^\ve, v_1^\ve) \to (\nu_1, v_1)$.
Suppose that $(\nu_i^\ve, v_i^\ve) \to (\nu_i, v_i)$ for all $i \le k$.
Let
\begin{align*}
w_{k+1}^\ve & = \phi^\ve(\sqrt{\ve}z_1)\Big( v_{k+1}(z_1) + \sqrt{\ve} N_1(\frac{z}{\sqrt{\ve}})\partial_{z_1}v_{k+1}(z_1) \Big).
\end{align*}
One verifies that $w_{k+1}^\ve$ is asymptotically orthogonal to $v_i$ for all $i \le k$, using
the previous lemma.
Then one shows that
\begin{align*}
v_{k+1}^\ve(z) - \sum_{i = 1}^k \frac{(w_{k+1}^\ve(z),v_{i}^\ve(z))}{(v_{i}^\ve(z),v_{i}^\ve(z))} v_{i}^\ve(z)
\end{align*}
are nonzero test functions for all $\ve$.
Use these test functions to obtain the estimate
\begin{align*}
\nu_{k+1}^\ve \le \nu_{k+1} + o(1),
\end{align*}
as $\ve$ tends to zero.
Then one can prove that $\nu_{k+1}^\ve, v_{k+1}^\ve$ converges to some eigenpair.
Use the simplicity and the upper estimate for $\nu_{k+1}^\ve$ to conclude that
for the full $\ve$ sequence
\begin{align*}
\lim_{\ve \to 0} \nu_{k+1}^\ve = \nu_{k+1}.
\end{align*}
This shows by induction that 
\begin{align*}
\lim_{\ve \to 0} \nu_{i}^\ve = \nu_i,
\end{align*}
for any $i$,
and that the corresponding eigenfunctions converge.
\end{proof}

Putting the above sequence of lemmas together, one concludes Theorem~\ref{tm:one}.

Remark that in general for $d\mu_\ve \rightharpoonup d\mu$, and a sequence $v^\ve$
weakly converging in $L^2(\rr^d, d\mu_\ve)$ to $v \in L^2(\rr^d, d\mu)$,
it is not always the case that
\begin{align}\label{eq:stron}
\lim_{\ve \to 0} \int_{\rr^d} (v^\ve - v)^2 \,d\mu_\ve & = 0.
\end{align}
To the positive, for instance, \eqref{eq:stron} holds if $v$ is bounded and the measure of $\rr^d$ is finite in the limit.
In our case the measure of $\rr^d$ is not finite, while the solution
is bounded and exponentially decaying, which compensates.

\section{Computing the leading terms}\label{sec:computing}

In this section we connect hypothesis \eqref{eq:H} with the hypothesis in Theorem~\ref{tm:salmiak}.
We also describe a scheme to compute the effective coefficients and the leading terms in 
the expansions of the eigenpairs in~Theorem~\ref{tm:one}.
The procedure goes as follows:
\begin{enumerate}[1.]
\item Locating the global minimum of the principle eigenvalue $\mu_1$.
\item Computing the effective coefficients $a^\mathrm{eff}$, $c^\mathrm{eff}$.
\item Computing the eigenpair $\lambda_1^0$, $v_1^0$.
\end{enumerate}
Because we do not have an effective characterization of the minimum of principal eigenvalue $\mu_1$,
some iterative procedure could be useful, say the Newton method.
For this purpose we compute the derivative $\mu'_1$ with respect to $x_1$.
The Hessian $\mu_1''$ can be obtained by the similar procedure to that of 
Lemma \ref{lm:mu1prime} below, or a finite difference after computing $\mu_1(x_1)$ at points close to the minimum.

\begin{lemma}\label{lm:mu1prime}
Let $\mu_1, \psi_1$ be the principle eigenpair to the problem
\begin{align}\label{eq:Psidefeq}
-\mop{div}_y(A(x_1, y)\nabla_y \psi) & = \mu(x_1) \psi \quad \text{ in } \square(x_1),\\
\psi & = 0 \quad \hspace*{1.05cm} \text{ on } \partial \square(x_1),\notag
\end{align}
normalized by
\begin{align}\label{eq:Psi1normalization}
\int_{\square(x_1)} \psi^2_1 \,dy = 1.
\end{align}
Let $V_n$ be the outward normal velocity of the boundary $\partial \square(x_1)$, and 
let $V$ be a globally defined velocity field for $\square(x_1)$, with respect to $x_1$.
Then
\begin{align*}
\mu_1' & = \int_{\square(x_1)} \frac{\partial A}{\partial x_1}\nabla \psi_1 \cdot \nabla \psi_1  \,dy 
- \int_{\partial \square(x_1)} (A \nabla_y \psi_1 \cdot \nabla_y \psi_1) (V \cdot \nu)  \,d\sigma \\
& \quad +    
 2 \int_{\square(x_1)} ( A \nabla_y \psi_1 \cdot \nabla_y (\nabla_y \psi_1 \cdot V )
- \mu_1 \psi_1 (\nabla_y \psi_1 \cdot V )   ) \,dy.
\end{align*}
\end{lemma}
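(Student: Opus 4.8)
The plan is to read $\mu_1'$ as a shape derivative of a Dirichlet eigenvalue and to compute it by the Hadamard/Reynolds technique. The starting point is that, by the normalization \eqref{eq:Psi1normalization}, one has the clean representation
\begin{align*}
\mu_1(x_1) = \int_{\square(x_1)} A(x_1,y)\,\nabla_y\psi_1(x_1,y)\cdot\nabla_y\psi_1(x_1,y)\,dy ,
\end{align*}
so it is enough to differentiate this in $x_1$. That $x_1\mapsto(\mu_1(x_1),\psi_1(x_1,\cdot))$ is differentiable in a topology strong enough for what follows is guaranteed by the simplicity of $\mu_1(x_1)$ (Krein--Rutman) together with the $C^2$-dependence of $A$ and of $\partial\square(x_1)$ on $x_1$ coming from (H1)--(H2); concretely, transporting the cell onto a fixed reference domain by the flow $T_{x_1}$ of the velocity field $V$ turns \eqref{eq:Psidefeq} into an analytic (in $x_1$) self-adjoint family on a fixed domain, to which standard perturbation theory applies and yields a $C^2$ eigenbranch whose eigenfunction is $H^2$. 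This also gives meaning to the local (Eulerian) derivative $\psi_1':=\partial_{x_1}\psi_1$ at frozen $y$, and to the material derivative $\dot\psi_1:=\psi_1'+\nabla_y\psi_1\cdot V$. Two elementary identities will drive the computation: differentiating $\psi_1(x_1,\gamma(x_1))=0$ along a boundary point $\gamma(x_1)$ moving with velocity $V$ gives $\psi_1'=-\nabla_y\psi_1\cdot V$ on $\partial\square(x_1)$; and, $\psi_1$ being smooth up to the (Lipschitz, in fact $C^2$) boundary and vanishing there, $\nabla_y\psi_1$ is purely normal on $\partial\square(x_1)$.

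First I would apply the Reynolds transport theorem to the moving domain $\square(x_1)$, obtaining
\begin{align*}
\mu_1' = \int_{\square(x_1)}\frac{\partial A}{\partial x_1}\nabla_y\psi_1\cdot\nabla_y\psi_1\,dy
+ 2\int_{\square(x_1)} A\,\nabla_y\psi_1'\cdot\nabla_y\psi_1\,dy
+ \int_{\partial\square(x_1)}(A\nabla_y\psi_1\cdot\nabla_y\psi_1)(V\cdot\nu)\,d\sigma .
\end{align*}
The first and third terms are already of the form appearing in the statement; the middle term is where the actual work lies. Since $\psi_1'$ is not an admissible test function in \eqref{eq:Psidefeq} — it does not vanish on $\partial\square(x_1)$ — I split $\psi_1'=\dot\psi_1-\nabla_y\psi_1\cdot V$. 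The material derivative $\dot\psi_1$ does lie in $H^1_0(\square(x_1))$ (it is the $x_1$-derivative of the pulled-back eigenfunction, which vanishes on the fixed boundary), so testing \eqref{eq:Psidefeq} with $\dot\psi_1$ gives $\int_{\square(x_1)} A\nabla_y\psi_1\cdot\nabla_y\dot\psi_1\,dy=\mu_1\int_{\square(x_1)}\psi_1\dot\psi_1\,dy$; differentiating the normalization \eqref{eq:Psi1normalization} by Reynolds and using $\psi_1=0$ on $\partial\square(x_1)$ gives $\int_{\square(x_1)}\psi_1\psi_1'\,dy=0$, hence $\int_{\square(x_1)}\psi_1\dot\psi_1\,dy=\int_{\square(x_1)}\psi_1(\nabla_y\psi_1\cdot V)\,dy$. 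Putting these together,
\begin{align*}
2\int_{\square(x_1)} A\nabla_y\psi_1'\cdot\nabla_y\psi_1\,dy
= 2\mu_1\int_{\square(x_1)}\psi_1(\nabla_y\psi_1\cdot V)\,dy
- 2\int_{\square(x_1)} A\nabla_y\psi_1\cdot\nabla_y(\nabla_y\psi_1\cdot V)\,dy ,
\end{align*}
and collecting the three contributions produces the displayed formula, once the orientation convention for $V$ (equivalently, for $\nu$) is fixed to match the signs in the statement. As an internal consistency check one can integrate the last volume term by parts, use \eqref{eq:Psidefeq} and the fact that $\nabla_y\psi_1$ is normal on $\partial\square(x_1)$, and verify that it equals $2\int_{\partial\square(x_1)}(A\nabla_y\psi_1\cdot\nabla_y\psi_1)(V\cdot\nu)\,d\sigma$, so that the expression collapses to the classical Hadamard form in which only $\partial_{x_1}A$ and a boundary integral over $\partial\square(x_1)$ survive.

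The main obstacle is not the algebra but the rigorous justification of the differentiation: one must know that $\mu_1'$ exists and that $\psi_1$ and $\dot\psi_1$ carry enough regularity in $x_1$ (and up to the boundary) for the Reynolds formula, the trace identity $\psi_1'=-\nabla_y\psi_1\cdot V$, and the integrations by parts to be legitimate. I would handle this entirely on the fixed reference cell: after the change of variables $y=T_{x_1}(\xi)$, problem \eqref{eq:Psidefeq} becomes an eigenvalue problem for the conjugated matrix $(\det DT_{x_1})\,(DT_{x_1})^{-1}\hat A\,(DT_{x_1})^{-T}$ with weight $\det DT_{x_1}$ on the $x_1$-independent domain, depending analytically on $x_1$ with simple first eigenvalue, so a $C^2$ eigenbranch with $H^2$ eigenfunction exists by standard perturbation theory; differentiating there and carefully tracking $\det DT_{x_1}$ and $DV$ reproduces exactly the volume and boundary terms above. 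A secondary point worth a remark is that the right-hand side is independent of the particular global extension $V$ of the boundary velocity — only $V\cdot\nu$ on $\partial\square(x_1)$ enters in the end — which is what makes it legitimate for the computational scheme of Section~\ref{sec:computing} to choose any convenient $V$.
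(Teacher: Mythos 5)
Your route is closely related to the paper's but organized differently: you differentiate the energy identity $\mu_1=\int_{\square(x_1)}A\nabla_y\psi_1\cdot\nabla_y\psi_1\,dy$ in a single Reynolds--transport step, whereas the paper differentiates the weak form of \eqref{eq:Psidefeq} and splits the computation across Lemmas~\ref{lm:constantcell} (coefficient dependence, cell fixed) and~\ref{lm:nonconstantcell} (shape dependence, coefficients fixed) before summing. Both routes hinge on exactly the two identities you isolate --- that the material derivative $\dot\psi_1$ is an admissible $H^1_0(\square(x_1))$ test function while the Eulerian derivative $\psi_1'$ is not, and that differentiating the normalization kills $\int_{\square(x_1)}\psi_1\,\psi_1'\,dy$. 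Your internal consistency check, collapsing the volume terms to the Hadamard boundary integral after integrating by parts and using that $\nabla_y\psi_1$ is normal on $\partial\square(x_1)$, is also correct and worth keeping.

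What you should not leave as a parenthetical is the sign. Running your Reynolds computation with $V$ the genuine outward boundary velocity yields the formula with $+\int_{\partial\square(x_1)}(A\nabla_y\psi_1\cdot\nabla_y\psi_1)(V\cdot\nu)\,d\sigma$ and with the two volume terms carrying the opposite signs, i.e.\ the displayed statement with $V$ replaced by $-V$. This is not a slip on your side: in Section~\ref{sec:computing} the paper sets $V_n=-\partial_{x_1}F/|\nabla_y F|$ with $\square(x_1)=\{F>0\}$ and $\nu=-\nabla_y F/|\nabla_y F|$, and for a boundary point $\gamma(x_1)$ one has $\gamma'\cdot\nu=+\partial_{x_1}F/|\nabla_y F|$, so the quantity the paper labels ``outward normal velocity'' is in fact the \emph{inward} one (Example~\ref{ex:hcase} confirms this: the formula gives $-h'$ at $y_2=h(x_1)$, where the outward speed is $h'$). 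With that convention understood, your derivation and the statement agree, and the collapsed form $\mu_1'=\int_{\square(x_1)}\partial_{x_1}A\,\nabla_y\psi_1\cdot\nabla_y\psi_1\,dy+\int_{\partial\square(x_1)}(A\nabla_y\psi_1\cdot\nabla_y\psi_1)V_n\,d\sigma$ is consistent with the classical Hadamard formula $-\int_{\partial\square}(A\nabla_y\psi_1\cdot\nabla_y\psi_1)V_n^{\mathrm{out}}\,d\sigma$. State this explicitly; as written, ``once the orientation convention for $V$ is fixed'' leaves the reader unable to tell whether the discrepancy is in your computation or in the lemma.
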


To prove Lemma~\ref{lm:mu1prime} we will 
consider separately the contributions to the linearizations from 
the dependence on the coefficients $A(x_1,y)$
and the dependence on the change in shape of $\square(x_1)$.
Lemma~\ref{lm:mu1prime} follows directly from Lemma~\ref{lm:constantcell} and \ref{lm:nonconstantcell} below.

The following example relates hypothesis~\eqref{eq:H} to 
the hypothesis used in~\cite{vishik1957regular} for a smooth profile $h$.
We will make use of the following boundary point property for the first eigenfunction
$\psi_1$ to the problem \eqref{eq:auxproblem}.

\begin{lemma}\label{lm:boundarypointlemma}
Let $\mu_1, \psi_1$ be the first eigenpair to the problem \eqref{eq:auxproblem},
with sign chosen such that $\psi_1(x_1,y) > 0$ everywhere in $\square(x_1)$.
Then
\begin{align*}
\nabla_y \psi_1(x_1,y) \cdot \nu & < 0 \quad \text{ a.e. } y \in \partial \square(x_1),
\end{align*}
for any $x_1 \in \overline{I}$.
\end{lemma}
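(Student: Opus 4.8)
The plan is to apply the classical Hopf boundary point lemma to the positive first eigenfunction $\psi_1(x_1,\cdot)$ of the cell problem \eqref{eq:auxproblem}. Fix $x_1 \in \overline I$ and write $u = \psi_1(x_1,\cdot)$ for brevity, regarded as a function on $\square(x_1)$ that vanishes on $\partial\square(x_1)$. By the Krein--Rutman theorem (already invoked in the paragraph after \eqref{eq:auxproblemnormalization}), $u$ is the principal eigenfunction and may be chosen so that $u > 0$ in $\square(x_1)$; it solves $-\mop{div}_y(A(x_1,y)\nabla_y u) = \mu_1(x_1) u \ge 0$ in $\square(x_1)$, i.e. $u$ is a nonnegative supersolution of the uniformly elliptic operator $L = -\mop{div}_y(A(x_1,y)\nabla_y \cdot)$ in divergence form. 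The coefficients $a_{ij}(x_1,\cdot)$ are bounded and satisfy the ellipticity condition with constant $\alpha$; by (H1) they are in fact $C^1$ in $y$, so $L$ has the regularity required for the Hopf lemma.

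The steps I would carry out: first, recall that $\partial\square(x_1) = \{y : F(x_1,y)=0\}$ with $F(x_1,\cdot) \in C^2$ and $Q(x_1,y_1)$ bounded and simply connected by (H1)--(H2); I would note that at a.e.\ boundary point $y_0 \in \partial\square(x_1)$ the boundary is $C^{1}$ (indeed $C^2$ away from the degenerate set $\{\nabla_y F = 0\}$, which has measure zero on the boundary), so that $\square(x_1)$ satisfies the interior ball condition at $y_0$ and the outward unit normal $\nu = \nu(y_0)$ is well defined. Second, since $u > 0$ in the interior and $u(y_0) = 0$, the Hopf boundary point lemma for uniformly elliptic divergence-form operators with a nonnegative zero-order term (here there is no zero-order term, or equivalently one may rewrite the eigenvalue equation as $Lu + 0\cdot u = \mu_1 u \ge 0$, which is fine since $u \ge 0$) yields the strict inequality for the outward normal derivative:
\begin{align*}
\frac{\partial u}{\partial \nu}(y_0) = \nabla_y u(y_0)\cdot \nu(y_0) < 0.
\end{align*}
Since this holds at every boundary point where $\partial\square(x_1)$ has an interior ball and a well-defined normal, and such points are of full $\sigma$-measure in $\partial\square(x_1)$, the claim $\nabla_y\psi_1(x_1,y)\cdot\nu < 0$ for a.e.\ $y \in \partial\square(x_1)$ follows. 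Finally, since the bound in the Hopf lemma depends only on $\alpha$, $\|A\|_\infty$, $\mu_1(x_1)$, the interior ball radius and $\min u$ over a compact interior set, and all of these depend continuously on $x_1 \in \overline I$ by the regularity assumptions on $A$ and $\square(x_1)$ and the simplicity of $\mu_1(x_1)$, the conclusion is uniform over $x_1 \in \overline I$.

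The main obstacle is the low boundary regularity: $\partial\square(x_1)$ is only asserted to be Lipschitz globally, so the interior ball condition and the pointwise normal derivative are not available at every boundary point. I would handle this by exploiting that $F(x_1,\cdot) \in C^2$, so $\partial\square(x_1)$ is a $C^2$ hypersurface precisely where $\nabla_y F \ne 0$; the bad set $\{\nabla_y F = 0\}\cap\{F=0\}$ has zero surface measure (it is contained in a level set of $F$ meeting the critical set, and one can appeal to Sard-type or implicit-function arguments, or simply accept the "a.e." in the statement as exactly accommodating this). This is why the lemma is phrased with "a.e.\ $y \in \partial\square(x_1)$", and it is the only delicate point; elliptic interior regularity giving $u \in C^1$ up to the smooth part of the boundary, and the Hopf lemma itself, are standard.
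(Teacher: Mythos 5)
Your proof is correct and follows essentially the same route as the paper: both arguments reduce the claim to the classical Hopf boundary point lemma (Gilbarg--Trudinger, Lemma~3.4) applied to the positive principal eigenfunction $\psi_1$ at the almost-every boundary point where the outward normal exists and an interior ball touches $\partial\square(x_1)$. If anything, your justification of the a.e.\ interior-ball condition via $F(x_1,\cdot)\in C^2$ and the measure-zero critical set $\{\nabla_y F=0\}\cap\{F=0\}$ is more careful than the paper's terse appeal to Lipschitz regularity of the boundary (which on its own would not guarantee an interior ball a.e.), but the substance and the key lemma invoked are the same.
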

\begin{proof}
One notes that for any $x \in \overline{I}$, $\psi_1(x_1,y)$ is continuous up to the boundary in $y$, and satisfies
$\psi_1(x_1,y_0) = 0$ for any $y_0 \in \partial \square(x_1)$,
and in particular $\psi_1(x_1,y) > \psi_1(x_1,y_0) = 0$ for every $y \in \square(x_1)$.
Moreover, $\psi_1$ is a subsolution to the equation
\begin{align*}
-\mop{div}_y(A(x_1,y)\nabla_y \psi_1) - \mu_1 \psi_1 = 0, \quad y \in \square(0).
\end{align*}
By the Lipschitz continuity of $\partial \square(x_1)$, 
at almost every $y_0 \in \partial \square(x_1)$, the outward unit normal $\nu$ exists,
the outward normal derivative exists $\nabla_y \psi_1 \cdot \nu$, and
there is a ball $B_R(y) \subset \square(x_1)$ with $y_0 \in \partial B_R(y)$.
The assertion then follows from the classical argument using the weak maximum
principle (c.f. Lemma 3.4 in \cite{gilbarg2015elliptic}).
\end{proof}

\begin{example}\label{ex:hcase}
Consider the case of the Dirichlet Laplacian $-\Delta$
in a finite thin strip in $\mathbf{R}^2$,
with profile given by a smooth positive $h(x_1)$, $x_1 \in [-1,1]$:
\begin{align*}
\Omega_\ve & = \{ x : -1 < x_1 < 1, \, 0 < x_2 < \ve h(x_1)  \}.
\end{align*}
A function $F : [-1,1] \times \mathbf{T}^1 \times \mathbf{R} \to \mathbf{R}$
such that
\begin{align*}
\Omega_\ve = \{ x : x_1 \in (-1,1), \, F(x_1,x/\ve) > 0 \}
\end{align*}
is 
\begin{align*}
F(x_1,y) & = \frac{y_2}{h(x_1)}\Big( 1 - \frac{y_2}{h(x_1)} \Big).
\end{align*}
The corresponding up-scaled cell is
\begin{align*}
\square(x_1) & = \{ y : F(x_1,y) > 0 \} = \{ y : y_1 \in \mathbf{T}^1 : \, 0 < y_2 < h(x_1) \}.
\end{align*}
The normal velocity of $\partial \square(x_1)$ is
\begin{align}\label{eq:normalexample}
V \cdot \nu & = - \frac{\partial_{x_1}F}{|\nabla_y F|} = -\frac{h'  y_2}{h},
\end{align}
and a globally defined smooth domain velocity field on $\overline{\square(x_1)}$ is
\begin{align*}
V & = \Big(0, -\frac{h' y_2}{h}\Big).
\end{align*}
By Lemma~\ref{lm:mu1prime}, 
\begin{align*}
\mu_1' & =  
- \int_{\partial \square(x_1)} |\nabla_y \psi_1|^2 (V \cdot \nu)  \,d\sigma \\
& \quad +   2 \int_{\square(x_1)} (( \nabla_y \psi_1 \cdot \nu) \nabla_y (\nabla_y \psi_1 \cdot V )
- \mu_1 \psi_1 (\nabla_y \psi_1 \cdot V )   ) \,dy \\
& = 
- \int_{\partial \square(x_1)} |\nabla_y \psi_1|^2 (V \cdot \nu)  \,d\sigma 
+ 2 \int_{\partial\square(x_1)} ( \nabla_y \psi_1 \cdot \nu) (\nabla_y \psi_1 \cdot V )  \,d\sigma \\
& = \int_{\partial \square(x_1)} |\nabla_y \psi_1|^2 (V \cdot \nu) \,d\sigma,
\end{align*}
where one in the second step has used the divergence theorem and the equation~\eqref{eq:Psidefeq},
and in the third step has used that here
$(\xi \cdot \nu)(\xi \cdot V) = |\xi|^2 (V \cdot \nu)$, $\xi \in \mathbf{T}^1 \times \mathbf{R}$.
One has $\int_{\partial \square(x_1)} |\nabla_y \psi_1|^2 \,d\sigma > 0$,
by the boundary point property, Lemma~\ref{lm:boundarypointlemma}, irregardless of the sign chosen for $\psi_1$.
(In other words, because otherwise the critical set
$\{ y \in \overline{\square(x_1)} : \psi_1 = 0, \, \nabla_y \psi_1 = 0 \}$
would be of positive $(d-1)$-dimensional measure.)
It follows that $\mu_1'(x_1) = 0$ if and only if $h'(x_1) = 0$.
Therefore by \eqref{eq:normalexample},
the hypothesis of unique minimum of $\mu_1(x_1)$ is 
equivalent to the existence of a unique maximum of $h(x_1)$ in this example.
One might remark that in this example, one also has access to both the domain monotonicity of the eigenvalues, as well as the exact eigenpair, none of which is available if $A(x_1,y)$ is not constant in the fast variable $y$.
\end{example}

We conclude this section by computing the linearization given in Lemma~\ref{lm:mu1prime}.
We first compute $\mu_1'$ for a constant cell $\square$,
that is $\square(x_1)$ independent of $x_1$.

\begin{lemma}\label{lm:constantcell}
Suppose that $\square(x_1) = \square$ is independent of $x_1$.
Let $A(x_1,y)$ and $\partial \square$ be sufficiently smooth.
Let $\mu_1, \psi_1$ be the principle eigenpair to
\begin{align*}
-\mop{div}_y (A(x_1,y) \nabla_y \psi ) & = \mu(x_1) \psi \quad \text{ in } \square,
\end{align*}
with the homogeneous Dirichlet condition on $\partial \square$, and normalized by
\begin{align*}
\int_\square \psi^2_1 \,dy = 1.
\end{align*}
Then by the Fr{\'e}chet differentiability of the eigenpair and the bilinear forms associated to the problem, remarking that $\mu_1$ is simple, 
\begin{align*}
\mu'_1 & = \int_\square \frac{\partial A}{\partial x_1}\nabla_y \psi_1 \cdot \nabla_y \psi_1  \,dy.
\end{align*}
\end{lemma}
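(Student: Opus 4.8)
The plan is to differentiate the weak formulation of the cell eigenvalue problem with respect to the parameter $x_1$, using crucially that here the cell $\square$ is \emph{fixed}, so that the derivative $\partial_{x_1}\psi_1$ still lies in $H^1_0(\square)$. First I would record the variational identity: for every $\varphi \in H^1_0(\square)$,
\begin{align*}
\int_\square A(x_1,y)\nabla_y\psi_1 \cdot \nabla_y\varphi\,dy = \mu_1(x_1)\int_\square \psi_1\,\varphi\,dy,
\end{align*}
together with the normalization $\int_\square \psi_1^2\,dy = 1$. Since $\mu_1(x_1)$ is simple (Krein--Rutman) and the symmetric coercive bilinear form $a_{x_1}(u,v) = \int_\square A(x_1,y)\nabla_y u\cdot\nabla_y v\,dy$ on $H^1_0(\square)$ depends on $x_1$ as smoothly as $A$ does (here $C^2$, from $a_{ij}\in C^2([0,1],L^\infty(\mathbf{T}^d))$), analytic perturbation theory for self-adjoint operators yields that $x_1 \mapsto (\mu_1(x_1),\psi_1(x_1,\cdot))$ is differentiable as a map into $\rr\times H^1_0(\square)$; this is precisely the Fr\'echet differentiability quoted in the statement, and it is the only non-computational ingredient.

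Then I would differentiate both identities in $x_1$. The normalization gives $\int_\square \psi_1\,\partial_{x_1}\psi_1\,dy = 0$. Differentiating the weak identity with an $x_1$-independent test function $\varphi$ gives
\begin{align*}
\int_\square \frac{\partial A}{\partial x_1}\nabla_y\psi_1\cdot\nabla_y\varphi\,dy + \int_\square A\,\nabla_y(\partial_{x_1}\psi_1)\cdot\nabla_y\varphi\,dy = \mu_1'\int_\square\psi_1\,\varphi\,dy + \mu_1\int_\square(\partial_{x_1}\psi_1)\,\varphi\,dy.
\end{align*}
Evaluating this at $\varphi = \psi_1$ (legitimate, since it holds for every fixed test function) and using the two normalization relations, the right-hand side collapses to $\mu_1'$, so
\begin{align*}
\mu_1' = \int_\square \frac{\partial A}{\partial x_1}\nabla_y\psi_1\cdot\nabla_y\psi_1\,dy + \int_\square A\,\nabla_y(\partial_{x_1}\psi_1)\cdot\nabla_y\psi_1\,dy.
\end{align*}

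The second term vanishes: by symmetry of $A$ it equals $\int_\square A\,\nabla_y\psi_1\cdot\nabla_y(\partial_{x_1}\psi_1)\,dy$, and since $\partial_{x_1}\psi_1\in H^1_0(\square)$ — this is exactly where the fixed-cell hypothesis enters — it may be used as a test function in the \emph{original} weak identity, giving $\int_\square A\,\nabla_y\psi_1\cdot\nabla_y(\partial_{x_1}\psi_1)\,dy = \mu_1\int_\square\psi_1\,\partial_{x_1}\psi_1\,dy = 0$. This yields the asserted formula. The one point requiring care — already signalled in the statement — is the justification of the differentiable dependence of the eigenbranch: one must verify that $a_{x_1}$ and the $L^2$ pairing are differentiable in $x_1$ (immediate from $a_{ij}\in C^2$) and that simplicity of $\mu_1(x_1)$ is stable under small perturbations of $x_1$ (continuity of the spectrum), so that the corresponding spectral projection and the normalized eigenfunction inherit the regularity. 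Note that the explicit form of $\partial_{x_1}\psi_1$ is never needed; only its existence in $H^1_0(\square)$ is used.
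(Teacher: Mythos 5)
Your proposal is correct and matches the paper's proof essentially step for step: differentiate the weak formulation in $x_1$, test with $\psi_1$, and eliminate the term involving $\nabla_y\partial_{x_1}\psi_1$ by using $\partial_{x_1}\psi_1 \in H^1_0(\square)$ (the fixed-cell hypothesis) as a test function in the original eigenidentity together with the symmetry of $A$. The only cosmetic difference is that you invoke $\int_\square \psi_1\,\partial_{x_1}\psi_1\,dy = 0$ from the differentiated normalization to kill the two $\mu_1$-terms separately, whereas the paper simply observes that they cancel against each other; your added remarks on perturbation-theoretic justification of differentiability are the same ingredient the paper alludes to with ``Fr\'echet differentiability'' and simplicity of $\mu_1$.
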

\begin{proof}
For any test function $\varphi \in H^1_0(\square)$,
\begin{align}\label{eq:odin}
\int_\square A\nabla_y \psi_1\cdot \nabla_y \varphi \,dy & = \mu_1 \int_\square \psi_1 \varphi \,dy.
\end{align}
Differentiate both sides with respect to $x_1$, to obtain
\begin{align}\label{eq:dva}
&  \int_\square \frac{\partial A}{\partial x_1}\nabla_y \psi_1\cdot \nabla_y \varphi \,dy \notag
+ \int_\square A\nabla_y \frac{\partial \psi_1}{\partial x_1} \cdot \nabla_y \varphi \,dy  \\
&\quad = \mu_1' \int_\square \psi_1 \varphi \,dy
+  \mu_1 \int_\square \frac{\partial \psi_1}{\partial x_1} \varphi \,dy.
\end{align}
Noting that $\partial_{x_1}\psi_1 \in H^1_0(\square)$, and using that $\mu_1, \psi_1$ is an eigenpair with test function
$\partial_{x_1}\psi_1$ in \eqref{eq:odin}, and the normalizing condition $\int_\square \psi_1^2 \,dy = 1$, yields after using $\psi_1$ as test function in \eqref{eq:dva},
\begin{align*}
\mu'_1 & = \int_\square \frac{\partial A}{\partial x_1}\nabla \psi_1 \cdot \nabla \psi_1  \,dy.\qedhere
\end{align*}
\end{proof}

Now we suppose that $A = A(x_1,y)$ is constant in $x_1$, and
let $\square(x_1)$ vary.
To compute $\mu_1'(x_1)$ we need to compute how points on the boundary of $\square(x_1)$ move in the normal
direction when $x_1$ is varied, i.e. the normal velocity of the boundary,
which we compute it terms of $F$.
By definition,
\begin{align*}
\square(x_1) = \{ y : F(x_1, y) > 0 \}.
\end{align*}
The boundary of $\square(x_1)$ is given by
\begin{align*}
\partial \square(x_1) = \{ y : F(x_1, y) = 0 \}.
\end{align*}
Let $V_n$ denote the outward normal velocity, that is
\begin{align*}
V_n & = - \frac{\partial_{x_1} F }{|\nabla_y F|} 
\end{align*}
Let $\phi \in H^1(\square(x_1))$ be such that $\int_{\square(x_1)} \phi \,dy = 0$ and
\begin{align*}
-\Delta_y \phi & = 0 \,\,\qquad\qquad \text{in }  \square(x_1) \\ 
\nabla_y \phi \cdot \nu & = - \frac{\partial_{x_1}F}{|\nabla_y F|} \quad \text{ on } \partial \square(x_1)
\end{align*}
If the compatibility condition is not satisfied, we set $V = 0$ in some interior ball,
or curve of positive measure.
Then $V = \nabla_y \phi$ is a globally defined velocity field such that $V_n = V \cdot \nu = -\frac{\partial_{x_1}F}{|\nabla_y F|}$
on $\partial \square(x_1)$,
and where $\nu = -\frac{\nabla_y F}{|\nabla_y F|}$ is the outward unit normal to $\square(x_1)$.

\begin{lemma}\label{lm:nonconstantcell}
Let $A(x_1,y)$ and $\partial \square(x_1)$ be sufficiently smooth.
Suppose that $A(x_1,y)$ is constant in $x_1$.
Let $\mu_1, \psi_1$ be the principle eigenpair to
\begin{align*}
-\mop{div}_y(Ay)\nabla_y \psi) & = \mu(x_1) \psi \quad \text{ in } \square(x_1),
\end{align*}
with homogeneous Dirichlet condition on $\partial \square(x_1)$, 
 normalized by
\begin{align*}
\int_{\square(x_1)} \psi^2_1 \,dy = 1.
\end{align*}
Then
\begin{align}\label{eq:mu1primex1}
\mu_1' & =
 2 \int_{\square(x_1)} ( A(y)\nabla_y \psi_1 \cdot \nabla_y (\nabla_y \psi_1 \cdot V )
- \mu_1 \psi_1 (\nabla_y \psi_1 \cdot V )   ) \,dy \\ 
& \quad - \int_{\partial \square(x_1)} (A(y) \nabla_y \psi_1 \cdot \nabla_y \psi_1) V_n  \,d\sigma. \notag
\end{align}
\end{lemma}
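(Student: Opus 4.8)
The plan is to obtain \eqref{eq:mu1primex1} by differentiating the Rayleigh representation of the first eigenvalue with the Hadamard (shape/transport) calculus. Taking $\varphi=\psi_1$ in the weak form of \eqref{eq:Psidefeq} and using the normalization $\int_{\square(x_1)}\psi_1^2\,dy=1$ gives
\begin{align*}
\mu_1(x_1) & = \int_{\square(x_1)} A(y)\nabla_y\psi_1(x_1,y)\cdot\nabla_y\psi_1(x_1,y)\,dy,
\end{align*}
so it suffices to differentiate this integral over the moving domain $\square(x_1)$. First I would fix, using the $C^2$-regularity of $F$, a smooth one-parameter family of diffeomorphisms $\Phi_t\colon\overline{\square(x_1)}\to\overline{\square(x_1+t)}$ with $\Phi_0=\mathrm{id}$ whose initial velocity $\mathcal V$ realizes the family, i.e.\ $\mathcal V\cdot\nu=-V_n$ on $\partial\square(x_1)$ in the conventions fixed above (equivalently $\mathcal V=-V$ after a harmonic extension). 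Pulling the eigenproblem back to the fixed domain $\square(x_1)$ makes $t\mapsto\bigl(\mu_1(x_1+t),\,\psi_1(x_1+t,\cdot)\circ\Phi_t\bigr)$ a $C^1$ curve in $\mathbf{R}\times H^1_0(\square(x_1))$, since $\mu_1(x_1)$ is simple by the Krein--Rutman theorem and simple eigenvalues depend smoothly on smooth data (cf.~\cite{komkov1986design,gilbarg2015elliptic}); write $\dot\psi_1\in H^1_0(\square(x_1))$ for the resulting material derivative.

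Next I would apply the transport theorem to the representation of $\mu_1$ above. As $A$ does not depend on $x_1$, this gives
\begin{align*}
\mu_1' & = 2\int_{\square(x_1)} A(y)\nabla_y(\partial_{x_1}\psi_1)\cdot\nabla_y\psi_1\,dy - \int_{\partial\square(x_1)}\bigl(A(y)\nabla_y\psi_1\cdot\nabla_y\psi_1\bigr)V_n\,d\sigma,
\end{align*}
where $\partial_{x_1}\psi_1$ is the partial ($y$-fixed) derivative, meaningful in the interior by the $C^1$-regularity of $\psi_1$ in $(x_1,y)$, and the boundary integral arises from the transport of $\square(x_1)$. Since $\psi_1(x_1+t,\cdot)$ vanishes on the moving boundary, one has $\partial_{x_1}\psi_1=\dot\psi_1+\nabla_y\psi_1\cdot V$ in the interior. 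Substituting and splitting the volume term, the contribution of $\dot\psi_1$ is evaluated by using $\dot\psi_1\in H^1_0(\square(x_1))$ as a test function in the eigenvalue equation, $\int_{\square(x_1)}A\nabla_y\dot\psi_1\cdot\nabla_y\psi_1\,dy=\mu_1\int_{\square(x_1)}\dot\psi_1\psi_1\,dy$, together with differentiation of the normalization $\int_{\square(x_1)}\psi_1^2\,dy=1$ (the boundary contribution in the transport formula vanishing because $\psi_1=0$ on $\partial\square(x_1)$), which yields $\int_{\square(x_1)}\psi_1\,\partial_{x_1}\psi_1\,dy=0$ and hence $\int_{\square(x_1)}\dot\psi_1\,\psi_1\,dy=-\int_{\square(x_1)}\psi_1(\nabla_y\psi_1\cdot V)\,dy$. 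Collecting the terms reproduces exactly \eqref{eq:mu1primex1}.

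An essentially equivalent route is to pull the whole bilinear form back to $\square(x_1)$ at the outset: differentiating then gives $\mu_1'=\int_{\square(x_1)}\dot{\mathcal A}\,\nabla_y\psi_1\cdot\nabla_y\psi_1\,dy-\mu_1\int_{\square(x_1)}(\mop{div}\mathcal V)\psi_1^2\,dy$ with $\dot{\mathcal A}$ the $t$-derivative at $0$ of $(\det D\Phi_t)(D\Phi_t)^{-1}(A\circ\Phi_t)(D\Phi_t)^{-\mathsf T}$, and one then integrates by parts using the equation for $\psi_1$ and the identity $\nabla_y(\nabla_y\psi_1\cdot V)=(DV)^{\mathsf T}\nabla_y\psi_1+(D^2\psi_1)V$ to reach the same formula; this trades the transport theorem for a longer algebraic reduction. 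I expect the main obstacle to be the rigorous shape-differentiability step under only $C^2$ regularity of $F$ and $A$ — that the material derivative $\dot\psi_1$ exists in $H^1_0(\square(x_1))$, that the transport theorem applies to $A\nabla_y\psi_1\cdot\nabla_y\psi_1$, and that the boundary integral comes out with exactly the stated coefficient; once the abstract calculus is in place the remaining steps are elementary, with all signs dictated by the conventions fixed above for $\nu$, $V$, and $V_n$.
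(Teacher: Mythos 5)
Your proposal is correct and takes essentially the same route as the paper: both compute $\mu_1'$ by Hadamard/transport differentiation, introduce the material derivative $\dot\psi_1 = \partial_{x_1}\psi_1 - \nabla_y\psi_1\cdot V$, use $\dot\psi_1\in H^1_0(\square(x_1))$ as a test function in the eigenvalue equation, and close via differentiation of the normalization with the boundary contribution killed by $\psi_1|_{\partial\square(x_1)}=0$. The only difference is cosmetic — you differentiate the Rayleigh representation $\mu_1=\int_{\square(x_1)}A\nabla_y\psi_1\cdot\nabla_y\psi_1\,dy$ directly, while the paper differentiates the weak form against a general test function $\varphi$ before setting $\varphi=\psi_1$, which amounts to the same computation.
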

\begin{proof}
Let $\dot v = \partial_{x_1} v - \nabla_y v \cdot V$ denote the material derivative.
For any test function $\varphi \in H^1_0(\square)$,
\begin{align}\label{eq:odin4}
\int_{\square(x_1)}A(y) \nabla_y \psi_1\cdot \nabla_y \varphi \,dy & = \mu_1 \int_{\square(x_1)} \psi_1 \varphi \,dy.
\end{align}
Differentiate both sides with respect to $x_1$, to obtain
\begin{align}\label{eq:dva7}
& \int_{\square(x_1)} \Big( A(y)\nabla_y \frac{\partial \psi_1}{\partial x_1} \cdot \nabla_y \varphi
+
A(y)\nabla_y \psi_1 \cdot \nabla_y \frac{\partial \varphi}{\partial x_1} 
\Big) \,dy \notag\\
& \quad
- \int_{\partial \square(x_1)}
(A(y)\nabla_y \psi_1 \cdot \nabla_y \varphi)V_n
\,d\sigma 
+  \int_{\square(x_1)} A(y)\nabla_y \dot \psi_1 \cdot \nabla_y \varphi \,dy \notag\\
& =
 \mu_1' \int_{\square(x_1)} \psi_1 \varphi \,dy
 +
 \mu_1 \int_{\square(x_1)} \Big( \frac{\partial \psi_1}{\partial x_1} \varphi + \psi_1 \frac{\partial \varphi}{\partial x_1} \Big) \,dy \\
& \quad  - \mu_1 \int_{\partial \square(x_1)} \psi_1 \varphi V_n \,d\sigma 
  + \mu_1 \int_{\square(x_1)} \dot \psi_1 \varphi \,dy.\notag
\end{align}
Use $\psi_1$ as a test function in \eqref{eq:dva7},
substitute $\partial_{x_1} \psi_1 = \dot \psi_1 - \nabla_y \psi_1 \cdot V$,
and note that $\dot \psi_1$ can be used as a test function in \eqref{eq:odin4}.
Using that $\psi_1 = 0$ on $\partial \square(x_1)$, and the normalization $\int_{\square(x_1)} \psi_1^2 \,dy = 1$, yield \eqref{eq:mu1primex1}.
\end{proof}

\bibliographystyle{plain}
\bibliography{refs}

\def\cprime{$'$} \def\cprime{$'$} \def\cprime{$'$}
\begin{thebibliography}{10}

\bibitem{AlPi-2002}
G.~Allaire and A.~Piatnitski.
\newblock Uniform spectral asymptotics for singularly perturbed locally
  periodic operators.
\newblock {\em Comm. Partial Differential Equations}, 27(3-4):705--725, 2002.

\bibitem{allaire2002uniform}
G.~Allaire and A.~Piatnitski.
\newblock Uniform spectral asymptotics for singularly perturbed locally
  periodic operators.
\newblock 2002.

\bibitem{ArPe-2011}
Jos\'e~M. Arrieta and Marcone~C. Pereira.
\newblock Homogenization in a thin domain with an oscillatory boundary.
\newblock {\em J. Math. Pures Appl. (9)}, 96(1):29--57, 2011.

\bibitem{borisov2010asymptotics}
Denis Borisov and Pedro Freitas.
\newblock Asymptotics of dirichlet eigenvalues and eigenfunctions of the
  laplacian on thin domains in rd.
\newblock {\em Journal of Functional Analysis}, 258(3):893--912, 2010.

\bibitem{BouFra-01}
G.~Bouchitt{\'e} and I.~Fragal{\`a}.
\newblock Homogenization of thin structures by two-scale method with respect to
  measures.
\newblock {\em SIAM J. Math. Anal.}, 32(6):1198--1226 (electronic), 2001.

\bibitem{friedlander2009spectrum}
L.~Friedlander and M.~Solomyak.
\newblock On the spectrum of the dirichlet laplacian in a narrow strip.
\newblock {\em Israel journal of mathematics}, 170(1):337--354, 2009.

\bibitem{FrSo-2009}
L.~Friedlander and M.~Solomyak.
\newblock On the spectrum of the {D}irichlet {L}aplacian in a narrow strip.
\newblock {\em Israel J. Math.}, 170:337--354, 2009.

\bibitem{gilbarg2015elliptic}
D.~Gilbarg and N.~S. Trudinger.
\newblock {\em Elliptic partial differential equations of second order}.
\newblock springer, 2015.

\bibitem{komkov1986design}
Vadim Komkov, Kyung~K Choi, and Edward~J Haug.
\newblock {\em Design sensitivity analysis of structural systems}, volume 177.
\newblock Academic press, 1986.

\bibitem{kufner1987some}
A.~Kufner and A.-M. S{\"a}ndig.
\newblock {\em Some applications of weighted Sobolev spaces}, volume 100.
\newblock Springer, 1987.

\bibitem{MePo-2010}
T.~A. Mel$\prime$nik and A.~V. Popov.
\newblock Asymptotic analysis of boundary value problems in thin perforated
  domains with rapidly changing thickness.
\newblock {\em Nel\=\i n\=\i \u\i n\=\i \ Koliv.}, 13(1):50--74, 2010.

\bibitem{NaPeTa-2016}
S.~A. Nazarov, E.~P\'erez, and J.~Taskinen.
\newblock Localization effect for {D}irichlet eigenfunctions in thin non-smooth
  domains.
\newblock {\em Trans. Amer. Math. Soc.}, 368(7):4787--4829, 2016.

\bibitem{necas}
J.~Ne{\v{c}}as.
\newblock {\em Direct methods in the theory of elliptic equations}.
\newblock Springer Monographs in Mathematics. Springer, Heidelberg, 2012.
\newblock Translated from the 1967 French original by Gerard Tronel and Alois
  Kufner, Editorial coordination and preface by {\v{S}}{\'a}rka
  Ne{\v{c}}asov{\'a} and a contribution by Christian G. Simader.

\bibitem{oleinik2009mathematical}
O.~A. Ole{\"\i}nik, A.~S. Shamaev, and G.~A. Yosifian.
\newblock {\em Mathematical problems in elasticity and homogenization}.
\newblock Elsevier, 2009.

\bibitem{pankratova2015spectral}
I.~Pankratova and K.~Pettersson.
\newblock Spectral asymptotics for an elliptic operator in a locally periodic
  perforated domain.
\newblock {\em Applicable Analysis}, 94(6):1207--1234, 2015.

\bibitem{pettersson2017two}
I.~Pettersson.
\newblock Two-scale convergence in thin domains with locally periodic rapidly
  oscillating boundary.
\newblock {\em Differential Equations \& Applications}, 9(3):393--412, 2017.

\bibitem{piatnitski1998asymptotic}
A.~L. Piatnitski.
\newblock Asymptotic behaviour of the ground state of singularly perturbed
  elliptic equations.
\newblock {\em Communications in mathematical physics}, 197(3):527--551, 1998.

\bibitem{ChPaPi-13}
I.~Pankratova V.~Chiad{\`o}~Piat and A.~Piatnitski.
\newblock Localization effect for a spectral problem in a perforated domain
  with {F}ourier boundary conditions.
\newblock {\em SIAM J. Math. Anal.}, 45(3):1302--1327, 2013.

\bibitem{vishik1957regular}
M.~I. Vishik and L.~A. Lyusternik.
\newblock Regular degeneration and boundary layer for linear differential
  equations with small parameter.
\newblock {\em Uspekhi Matematicheskikh Nauk}, 12(5):3--122, 1957.

\bibitem{zhikov2000extension}
V.~V. Zhikov.
\newblock On an extension and an application of the two-scale convergence
  method.
\newblock {\em Mat. Sb}, 191(7):31--72, 2000.

\bibitem{Zh-2004}
V.~V. Zhikov.
\newblock On two-scale convergence.
\newblock {\em Tr. Semin. im. I. G. Petrovskogo}, (23):149--187, 410, 2003.

\end{thebibliography}

\end{document}